\newtheorem{theorem}{Theorem}[section]
\newtheorem{lemma}[theorem]{Lemma}
\theoremstyle{definition}
\theoremstyle{remark}
\newtheorem{remark}[theorem]{Remark}
\numberwithin{equation}{section}
\def\dd{\, \mathrm{d}}
\newcommand{\wsc}{\overset{\ast}{\rightharpoonup}}%
\providecommand{\customgenericname}{}
\newcommand{\newcustomproblem}[2]{%
	\newenvironment{#1}[1]
	{%
		\renewcommand\customgenericname{#2}%
		\renewcommand\theinnercustomgeneric{##1}%
		\innercustomgeneric
	}
	{\endinnercustomgeneric}
}
\newcommand*{\bqed}{\hfill\ensuremath{\blacksquare}}%
\begin{document}
	
	\today
	
	
	\title[Dynamical shallow ice sheets]{On the dynamics of grounded shallow ice sheets: Modelling and analysis.}
	
	
	\author{Paolo Piersanti}
	\address{Department of Mathematics and Institute for Scientific Computing and Applied Mathematics, Indiana University Bloomington, 729 East Third Street, Bloomington, Indiana, USA}
	\email{ppiersan@iu.edu}
	
	\author{Roger Temam}
	\address{Department of Mathematics and Institute for Scientific Computing and Applied Mathematics, Indiana University Bloomington, 729 East Third Street, Bloomington, Indiana, USA}
	\email{temam@indiana.edu}
	
\begin{abstract}
In this article we formulate a model describing the evolution of thickness of a grounded shallow ice sheet. The thickness of the ice sheet is constrained to be nonnegative. This renders the problem under consideration an obstacle problem.
A rigorous analysis shows that the model is thus governed by a set of variational inequalities that involve nonlinearities in the time derivative and in the elliptic term, and that it admits solutions, whose existence is established by means of a semi-discrete scheme and the penalty method.
\end{abstract}

\maketitle


\tableofcontents

\section{Introduction}
\label{Intro}

The study of ice sheets melting and its correlation with the global warming problem has been attracting the interest of experts from all over the branches of science.

In this article we propose and study a mathematical model describing the evolution of the thickness of a grounded shallow ice sheet (from now on, simply \emph{shallow ice sheet}). The ice thickness of a shallow ice sheet evolves as a consequence of many factors like, for instance, the rate at which snow deposits, the rate at which melting occurs, as well as the velocity at which the glacier slides along the lithosphere. Since the ice thickness level is constrained to remain on or above the lithosphere at all times, the problem under consideration can be regarded as a time-dependent obstacle problem.

Grounded ice sheets and sea ice are two related but physically different problems. The mathematical literature related to ice sheets and glaciers and which deals with sea ice and glaciers is vast and abundant; in this direction we mention, for instance, the articles~\cite{Jouvet2015,Jouvet2015-2,Jouvet2014,Jouvet2013,Michel2014,Schoof2006,Schoof2006-2}, and the celebrated article by W.D. Hibler~\cite{Hibler1979} to which we refer later on, and which deals with sea ice. The local-in-time well-posedness of Hibler's model has recently been established in the article by Liu, Thomas and Titi~\cite{Titi2021}. The authors designed a regularizing scheme based on physical observations for deriving the sought local-in-time well-posedness.

Brandt et al. discussed in~\cite{Hieber2021} the local-in-time well-posedness of Hibler's model by means of a different regularization approach. The authors constructed the local strong solutions of the corresponding regularized system by investigating the analytic semi-group property of the parabolic operator of the regularized system in a bounded domain, and they also established the global well-posedness in time for initial data close to constant equilibria.

In this direction, we also mention the recently appeared pre-prints~\cite{Brandt2022-2,Brandt2022-3,Brandt2022-1}.

In the recent pre-print~\cite{Figalli2021}, Figalli, Ros-Oton and Serra studied the phase transition of ice melting to water as a Stefan problem. The results established in this article provide a refined understanding of the Stefan problem's singularities and answer some long-standing open questions in the field of free-boundary problems.

In 2002 Calvo, D\'iaz, Durany, Schiavi and V\'azquez~\cite{Diaz2002} published a paper discussing the evolution of the thickness of a shallow ice sheet as an obstacle problem. In their article, the authors only considered one spatial direction, they assumed the basal velocity to be smooth, and assumed the lithosphere to be flat. This seminal paper is, to our best knowledge, the very first record in the literature where a sound model describing the evolution of ice sheets thickness in the context of obstacle problems is discussed. This formulation was also exploited to justify the generation of fast ice streams in ice sheets flowing along soft and deformable beds~\cite{Diaz2007}. In this direction, we also cite the related previous paper~\cite{Diaz1999}.

Ten years later, in 2012, Jouvet and Bueler~\cite{JouvBuel2012} studied the steady (i.e., time-independent) version of the problem considered in~\cite{Diaz2002} where, this time, two spatial directions and a more general lithosphere topography were taken into account.

It appears that the ice thickness evolution as a time-dependent obstacle problem over a two-dimensional spatial domain has not been addressed in the literature yet.
The purpose of this article is exactly to address this problem under suitable assumptions.

This article is divided into four Sections (including this one). In Section~\ref{Sec:1} we present the main notations we shall be using throughout the manuscript, and we formally derive the governing equations for our model.

In Section~\ref{Sec:2} we formulate the ``penalized'' version corresponding to the obstacle problem introduced in Section~\ref{Sec:1}, and we establish the existence of solutions to this model by resorting to a series of new preparatory results as well as the Dubinskii's compactness theorem.

Finally, in Section~\ref{Sec:3}, we pass to the limit in the penalty parameter and we recover the actual model corresponding to the model we formally derived in Section~\ref{Sec:1}. This model, for which we also define the rigorous concept of solution, will take the form of a set of variational inequalities. The presence of the constraint, which adds a further nonlinearity to the two already considered (the first nonlinearity appears in the evolutionary term, while the second nonlinearity is the $p$-Laplacian), requires new strategies to be adopted in the limit passage in order to overcome the arising mathematical difficulties. In particular, we will see that vector-valued measures will play a critical role in the analysis.

\section{Notations and formal derivation of the model}
\label{Sec:1}

We denote by $(\mathbb{R}^n,\cdot)$ the $n$-dimensional Euclidean space equipped with its standard inner product.
Given an open subset $\Omega$ of $\mathbb{R}^n$ notations such as $L^2(\Omega)$, $H^m(\Omega)$, or $H^m_0 (\Omega)$, $m \ge 1$, designate the usual Lebesgue and Sobolev spaces, and the notation $\mathcal{D} (\Omega)$ designates the space of all functions that are infinitely differentiable over $\Omega$ and have compact support in $\Omega$. The notation $\left\| \cdot \right\|_X$ designates the norm in a normed vector space $X$. The dual space of a vector space $X$ is denoted by $X^\ast$ and the duality pair between $X^\ast$ and $X$ is denoted by $\langle \cdot, \cdot\rangle_{X^\ast,X}$.
Spaces of vector-valued functions are denoted with boldface letters. Lebesgue spaces defined over a bounded open interval $I$ (cf.~\cite{Leoni2017}), are denoted $L^p(I;X)$, where $X$ is a Banach space and $1 \le p \le \infty$. The notation $\left\|\cdot\right\|_{L^p(I;X)}$ designates the norm of the Lebesgue space $L^p(I;H)$.
Sobolev spaces defined over a bounded open interval $I$ (cf.~\cite{Leoni2017}), are denoted $W^{m,p}(I;X)$, where $X$ is a Banach space, $m\ge 1$ and $1 \le p \le \infty$.
The notation $\left\|\cdot\right\|_{W^{m,p}(I;X)}$ designates the norm of the Sobolev space $W^{m,p}(I;X)$.

A \emph{domain in} $\mathbb{R}^n$ is a bounded and connected open subset $\Omega$ of $\mathbb{R}^n$, whose boundary $\partial \Omega$ is Lipschitz-continuous, the set $\Omega$ being locally on a single side of $\partial \Omega$, viz.~\cite{PGCLNFAA}.

Let $\Omega$ be a domain in $\mathbb{R}^2$ and let $x=(x_1,x_2)$ be a generic point in $\overline{\Omega}$. Let $\bm{\nu}$ denote the outer unit vector field along the boundary of $\Omega$. Let $\nabla$ denote the gradient operator with respect to the coordinates $x_1$ and $x_2$, namely,
$$
\nabla=\left(\dfrac{\partial}{\partial x_1},\dfrac{\partial}{\partial x_2}\right),
$$
and let the symbol $(\nabla \cdot)$ denote the divergence operator, namely, for any vector field $\bm{v}=(v_1,v_2):\Omega \to \mathbb{R}^2$
$$
\nabla \cdot \bm{v}:= \dfrac{\partial v_1}{\partial x_1} + \dfrac{\partial v_2}{\partial x_2}.
$$
Let $T>0$ be given and let us consider the time interval $(0,T)$, any time instant in which is denoted by the letter $t$.

The lithosphere elevation is described by the function $b:\overline{\Omega} \to \mathbb{R}$. Positive values of $b$ are associated with altitudes above the sea level, whereas negative values of $b$ are associated with altitudes below the sea level.
We also assume, without loss of generality, that the lithosphere topography does not change throughout the observation time.

The elevation of the upper ice surface is described by the function $h:[0,T]\times \overline{\Omega} \to \mathbb{R}$. It is immediate to observe that 
\begin{equation}
	\label{constraint-h}
	h \ge b \quad\textup{ in }[0,T]\times \overline{\Omega}.
\end{equation}

The ice thickness $H:=h-b$ is thus nonnegative in $[0,T] \times \overline{\Omega}$. This consideration implies that the problem of studying the evolution of the sea ice thickness can be regarded as an obstacle problem, where the \emph{obstacle} is represented by the lithosphere.

This constraint implies the existence of a \emph{free boundary}~\cite{Brezis1972,SalsaCaff2005,Fichera77}. For all, or possibly almost every (a.e. in what follows) $t \in (0,T)$, we define the set $\Omega_{t}^{+}$ by:
\begin{equation}
	\label{ice-region}
	\Omega_{t}^{+}:=\{x \in \Omega; h(t,x) > b(x)\}=\{x \in \Omega; H(t,x)>0\}.
\end{equation}

The set $\Omega_{t}^{+}$ denotes the region of $\Omega$ which is covered with ice at the time instant $t$. The corresponding free boundary is the set
\begin{equation}
	\label{free-boundary}
	\Gamma_{f,t}:=\Omega \cap \partial\Omega_{t}^{+}.
\end{equation}

The variation of the ice thickness $H$ is influenced by two source terms: the surface-mass balance $a_s$, which is associated with ice accumulation and ablation rate, and the basal melting rate $a_b$. The function $a_b$ is equal to zero when the basal temperature is smaller than the ice melting point; otherwise, it is greater than zero.
The functions $a_s$ and $a_b$, in general, solely depend on the horizontal location $x$ and the surface elevation $h=h(t,x)$ at time $t \in [0,T]$. The terms $a_s$ and $a_b$ can thus be regarded as functions
\begin{align*}
	a_s&:[0,T] \times \overline{\Omega} \times \mathbb{R} \to \mathbb{R},\\
	a_b&:[0,T] \times \overline{\Omega} \times \mathbb{R} \to \mathbb{R}^+_{0}.
\end{align*}

In what follows we assume that $a_b \equiv 0$.
We define the function $a:[0,T] \times \overline{\Omega} \times \mathbb{R} \to \mathbb{R}$ by:
$$
a:=a_s,
$$
and we recall that this function is \emph{assumed} to be continuous in $[0,T] \times \overline{\Omega}$, strictly positive in a subset of $\Omega_{t}^{+}$ (where accumulation occurs), strictly negative in $\Omega_{t}^{-}:=\Omega \setminus (\Omega_{t}^{+} \cup \Gamma_{f,t})$ (where ablation occurs), and nonnegative in the complementary region of $\Omega_{t}^{+}$ characterised by ablation~\cite{GreveBlatter2009,JouvBuel2012,SchoofHewitt2013}. Ice sheets are incompressible, non-Newtonian, gravity driven flows~\cite{Fowler1997, GreveBlatter2009}. Ice flows from areas of $\Omega_{t}^{+}$ characterised by accumulation (i.e., regions where $a_s>0$) to areas of $\Omega_{t}^{+}$ characterised by ablation (i.e., regions where $a_s \le 0$).

Ice thickness is also influenced by the basal sliding velocity $\bm{U}_b$, which can be regarded as a given vector field in $\mathbb{R}^2$ solely depending on the horizontal position, i.e., 
$$
\bm{U}_b:\overline{\Omega} \to \mathbb{R}^2.
$$
and we recall that (cf., e.g., \cite{GreveBlatter2009}), when the ice base is frozen, we have $\bm{U}_b=\bm{0}$. 

The vector field $\bm{U}:[0,T] \times \overline{\Omega} \times \mathbb{R} \to \mathbb{R}^2$ denotes the horizontal ice flow velocity; the vector field $\bm{Q}$ denotes the \emph{volume flux}, defined as the integral of the horizontal ice flow velocity $\bm{U}$ with respect to the vertical direction (cf., e.g., equation~(5.47) of~\cite{GreveBlatter2009}), namely:
\begin{equation}
	\label{flux}
	\bm{Q}:[0,T] \times \overline{\Omega}\to\mathbb{R}^2 \qquad \textup{ and } \quad \bm{Q}:=\int_{b}^{h} \bm{U} \dd z.
\end{equation}

In the same spirit as Jouvet \& Bueler~\cite{JouvBuel2012}, we \emph{assume} that for each $t \in (0,T)$ there is no volume ice flow towards $\Omega_{t}^{-}$, i.e., 
\begin{equation}
	\label{normal-flux}
	\bm{Q} \cdot \bm{\nu} =0 \quad \textup{ on }\Gamma_{f,t}.
\end{equation}

In view of~\eqref{normal-flux}, we can naturally extend the volume flux $\bm{Q}$ by zero outside $\Omega_{t}^{+}$, i.e.,
\begin{equation}
	\label{outer-flux}
	\bm{Q}=\bm{0} \quad \textup{ in } \Omega_{t}^{-}.
\end{equation}

Besides, once again in the spirit of~\cite{JouvBuel2012}, we have that
\begin{equation}
	\label{BC}
	H=0 \textup{ on }\Gamma_{f,t} \qquad\textup{ and } \quad H=0 \textup{ on } \partial\Omega.
\end{equation}

The evolution of the ice thickness is governed by the \emph{ice thickness equation} (cf., e.g., equation~(5.55) in~\cite{GreveBlatter2009}), that we recall here below:
\begin{equation}
	\label{sie}
	\dfrac{\partial H}{\partial t}= -\nabla \cdot \bm{Q} + a.
\end{equation}

The free boundary $\Gamma_{f,t}$ and the region $\Omega_{t}^{+}$ are correctly described \emph{only} through a weak formulation of the problem under consideration. Prior to rigorously stating the weak formulation of the problem under consideration, we have to \emph{formally} recover the boundary value problem associated with~\eqref{sie}.
To this aim we \emph{first} fix a \emph{smooth enough} test function $v$ such that $v \ge b$ in $[0,T] \times \overline{\Omega}$ and, \emph{second}, we multiply~\eqref{sie} by $(v-h)$ and integrate over $\Omega$. As a result of this manipulation of~\eqref{sie}, the following identity holds for all $t \in (0,T)$: 
\begin{equation}
	\label{sie-2}
	\int_{\Omega} \dfrac{\partial H}{\partial t} (v-h) \dd x + \int_{\Omega} \nabla \cdot \bm{Q} (v-h) \dd x = \int_{\Omega} a (v-h) \dd x.
\end{equation}

By virtue of the fact that $h \ge b$ in $[0,T]\times \overline{\Omega}$, we can specialise $v=h$. The definition of $\Omega_{t}^{-}$ in turn implies that $\Omega = \Omega_{t}^{+} \sqcup \Omega_{t}^{-} \cup \Gamma_{f,t}$, where the symbol $\sqcup$ denotes the union of two disjoint sets.
An application of~\eqref{outer-flux} and of the Gauss-Green theorem to~\eqref{sie-2} gives
\begin{equation}
	\label{Gauss}
	\begin{aligned}
		&\int_{\Omega_{t}^{+}} \dfrac{\partial H}{\partial t} (v-h) \dd x +\int_{\Omega_{t}^{-}} \dfrac{\partial H}{\partial t} (v-h) \dd x\\
		&\quad+ \int_{\Omega_t^{+}} \nabla \cdot \bm{Q} (v-h) \dd x + \int_{\Omega_t^{-}} \nabla \cdot \bm{Q} (v-h) \dd x\\
		&\quad- \int_{\Omega_t^{+}} a (v-h) \dd x- \int_{\Omega_t^{-}} a (v-h) \dd x\\
		&=\int_{\Omega_{t}^{+}} \left(\dfrac{\partial H}{\partial t} + \nabla\cdot\bm{Q}-a\right) (v-h) \dd x + \int_{\Omega_{t}^{-}} \dfrac{\partial H}{\partial t} (v-h) \dd x\\
		&\quad+\underbrace{\int_{\partial\Omega_{t}^{-}} \bm{Q} \cdot \bm{\nu} (v-h) \dd \Gamma-\int_{\Omega_{t}^{-}} \bm{Q} \cdot \nabla(v-h) \dd x}_{=0 \textup{ by \eqref{outer-flux} and the fact that }\partial\Omega_{t}^{+}=\partial\Omega_{t}^{-}} - \int_{\Omega_{t}^{-}} a(v-h) \dd x.
	\end{aligned}
\end{equation}

In order to work out the following step of the boundary value problem recovery, let us recall that $\Omega_{t}^{-}$ denotes the region in $\Omega$ where the lithosphere is \emph{not} covered with ice, i.e., we have $H=0$ in $\Omega_{t}^{-}$. Moreover, the ice thickness $H$ in $\Omega_{t}^{-}$ \emph{either} does not change ($\partial H/\partial t = 0$) \emph{or} increases ($\partial H/\partial t > 0$); equivalently, the ice thickness $H$ \emph{cannot} diminish in $\Omega_{t}^{-}$.

On the one hand, in the region $\Omega_{t}^{+}$ the ice thickness evolution is governed by~\eqref{sie}, so that we have
\begin{equation}
	\label{Gauss-2}
	\int_{\Omega_{t}^{+}} \left(\dfrac{\partial H}{\partial t} + \nabla\cdot\bm{Q}-a\right) (v-h) \dd x =0, \quad \textup{ for all }t\in (0,T).
\end{equation}

On the other hand, specialising $v=h+\varphi$ in~\eqref{Gauss}, where $\varphi \in \mathcal{D}((0,T) \times \Omega)$, $\varphi \ge 0$ in $[0,T] \times \Omega$, recalling that $a \le 0$ in $\Omega_{t}^{-}$, and recalling the remark made above about the nonnegativeness of $\partial H/\partial t$ in $\Omega_{t}^{-}$, we obtain
\begin{equation}
	\label{Gauss-3}
	\int_{\Omega_{t}^{-}} \left(\dfrac{\partial H}{\partial t} -a\right) \varphi \dd x \ge 0,\quad\textup{ for all } t \in (0,T).
\end{equation}

Putting together~\eqref{sie-2}--\eqref{Gauss-3} gives
\begin{equation}
	\label{Gauss-4}
	\int_{\Omega} \left(\dfrac{\partial H}{\partial t}+\nabla \cdot \bm{Q}-a\right) \varphi \dd x \ge 0, \quad \textup{ for all }\varphi \in \mathcal{D}((0,T)\times \Omega), \varphi \ge 0 \textup{ in }[0,T]\times \overline{\Omega}.
\end{equation}

The latter can be straightforwardly changed into:
\begin{equation}
	\label{ineq-1}
	\begin{aligned}
		&\dfrac{\partial H}{\partial t}+\nabla \cdot \bm{Q} -a = 0, \quad\textup{ in } \Omega_t^{+}=\{x \in \Omega; H(t,x)>0\} \textup{ for all } t\in (0,T),\\
		\\
		&\dfrac{\partial H}{\partial t}+\nabla \cdot \bm{Q} -a \ge 0, \quad\textup{ in } \Omega_t^{-}=\Omega \setminus (\Omega_{t}^{+} \cup \Gamma_{f,t}) \textup{ for all } t\in (0,T).
	\end{aligned}
\end{equation}

Define $H_0:=h(0)-b$, and observe that $H_0 \ge 0$ by virtue of the observation made at the beginning of this Section (Section~\ref{Sec:1}). Putting together~\eqref{BC} and~\eqref{ineq-1} gives the sought free boundary value problem: \emph{Find} $H\ge 0$ \emph{satisfying:}
\begin{equation}
	\label{BVP}
	\begin{cases}
		&\dfrac{\partial H}{\partial t}+\nabla \cdot \bm{Q} -a = 0, \quad\textup{ in } \Omega_t^{+}=\{x \in \Omega; H(t,x)>0\} \textup{ for all } t\in (0,T),\\
		\\
		&\dfrac{\partial H}{\partial t}+\nabla \cdot \bm{Q} -a \ge 0, \quad\textup{ in } \Omega_t^{-}=\Omega\setminus(\Omega_{t}^{+} \cup (\Omega \cap \partial\Omega_{t}^{+})) \textup{ for all } t\in (0,T),\\
		\\
		&H(t,\cdot)=0, \quad\textup{ on }\partial\Omega \textup{ for all }t \in (0,T),\\
		\\
		&H(0,\cdot)=H_0 \ge 0.
	\end{cases}
\end{equation}

Multiplying the equation in the boundary value problem~\eqref{BVP} by $(h-b)$, integrating over $(0,T) \times \Omega$, and observing that $h(t,\cdot)=b$ in $\Omega_{t}^{-}$ gives:
\begin{equation}
	\label{weak-1}
	\int_{0}^{T} \int_{\Omega} \left(\dfrac{\partial H}{\partial t} +\nabla \cdot \bm{Q}-a\right)(h-b) \dd x \dd t =0.
\end{equation}

Let $v=v(t,x)$ be any test function such that $v(t,x) \ge b(x)$ for a.e. $(t,x) \in (0,T) \times \Omega$. Multiplying the equations in the boundary value problem~\eqref{BVP} by $(v-b)$, and integrating over $(0,T) \times \Omega$ gives:
\begin{equation}
	\label{weak-2}
	\int_{0}^{T} \int_{\Omega} \left(\dfrac{\partial H}{\partial t} +\nabla \cdot \bm{Q}-a\right)(v-b) \dd x \dd t \ge 0.
\end{equation}

Combining~\eqref{weak-1} and~\eqref{weak-2} gives:
\begin{equation}
	\label{weak-3}
	\int_{0}^{T} \int_{\Omega} \left(\dfrac{\partial H}{\partial t} +\nabla \cdot \bm{Q}-a\right)(v-h) \dd x \dd t \ge 0.
\end{equation}

The arbitrariness of the test function $v$ taken as above finally allows us to write down the weak variational formulation associated with the boundary value problem~\eqref{BVP} (recall that $H:=h-b$):\newline
\emph{Find} $H \ge 0$ \emph{satisfying the variational inequalities:}
\begin{equation}
	\label{weak-4}
	\int_{0}^{T} \int_{\Omega} \dfrac{\partial H}{\partial t} (v-h) \dd x\dd t -\int_{0}^{T} \int_{\Omega} \bm{Q} \cdot \nabla (v-h) \dd x \dd t \ge \int_{0}^{T}\int_{\Omega} a (v-h) \dd x\dd t,
\end{equation}
\emph{for all test functions} $v$ \emph{such that} $v \ge b$ \emph{for a.e.} $(t,x) \in (0,T) \times \Omega$, a\emph{nd satisfying the following boundary conditions along} $\partial\Omega$
$$
H=0\quad\mbox{ on }\partial\Omega,
$$
\emph{and satisfying the following initial condition}
$$
H(0,x) = H_0(x), \quad \mbox{ for a.e. }x\in \Omega,
$$
\emph{where} $H_0=h(0)-b$ \emph{is given, nonnegative and nonzero} (see~\eqref{constraint-h}).

Note that, at least for the time being, the rigorous concept of solution has not been defined yet as we did not specify the regularity of $H$ and of the data. This task will be postponed to forthcoming Sections.

Ice is viscous too; its viscosity is described in terms of the Glen power law (cf., e.g., equation~(4.16) in~\cite{GreveBlatter2009}) with ice softness coefficient $A(x,z)$ and exponent $2.8 \le p \le 5$. The attainable values for $p$ are suggested by laboratory experiments~\cite{GoldKohl2001}. Regarding the ice softness as a function is motivated by the idea of coupling the ice thickness equation with a thermodynamical model~\cite{GreveBlatter2009,Hooke2005}.

By equation~(5.84) in~\cite{GreveBlatter2009}, the horizontal ice flow velocity $\bm{U}$ can be expressed in terms of the elevation of the upper ice surface $h$ via the following formula:
\begin{equation}
	\label{velocity}
	\bm{U}(\cdot,\cdot,z)=-2 (\rho g)^{p-1} \left(\int_{b}^{z} A(s) (h-s)^{p-1} \dd s\right) |\nabla h|^{p-2} \nabla h +\bm{U}_b.
\end{equation}

Observe that, not only does the formula~\eqref{velocity} use Glen's power law, but also that it is only valid in a certain shallow limit derived from the Stokes model (cf., e.g., \cite{SchoofHewitt2013}).

Plugging formula~\eqref{velocity} into~\eqref{flux} gives:
\begin{equation}
	\label{flux-2}
	\begin{aligned}
		\bm{Q}&=-2 (\rho g)^{p-1}\left(\int_{b}^{h} \int_{b}^{z} A(s) (h-s)^{p-1} \dd s \dd z\right) |\nabla h|^{p-2} \nabla h +(h-b) \bm{U}_b\\
		&=-2 (\rho g)^{p-1}\left(\int_{b}^{h} A(s) (h-s)^{p} \dd s \right) |\nabla h|^{p-2} \nabla h +(h-b) \bm{U}_b,
	\end{aligned}
\end{equation}
where the latter equality is obtained by an integration by parts with respect to the variable $z$.

The weak form~\eqref{weak-4} is a degenerate extension of the $p$-Laplacian obstacle problem since, at each time $t \in [0,T]$, the thickness $H = h-b$ tends to zero at the free boundary $\Gamma_{f,t}$. 
This is the reason why the solution of~\eqref{weak-4} exhibits infinite gradients at the margin $\Gamma_{f,t}$, viz. \cite{Bueler2005} and~\cite{GreveBlatter2009}. 
A further reformulation that recovers nondegenerate $p$-Laplacian form is proposed next, cf.~\eqref{weak-u}. 
While this formulation involves only a flat obstacle, on the one hand, it acquires a ``tilt'' which does not preserve monotonicity of the variational form on the other hand.

For the sake of clarity, the magnitudes involved in the model are listed in the following table.

\begin{table}[h]
	\begin{tabular}{|c|p{5cm}|}
		\hline
		\textbf{Variable} & \textbf{Description} \\ \hline
		$A=A(s)$ & Ice softness in the Glen power law \\ \hline
		$b$ & Lithosphere elevation \\ \hline
		$g$ & Gravitational acceleration \\ \hline
		$h$ & Upper ice surface elevation \\ \hline
		$H:=h-b$ & Ice thickness \\ \hline
		$a_s$ & Accumulation rate function \\ \hline
		$a_b$ & Ablation rate function \\ \hline
		$a$ & $a=a_s - a_b$ Balance function \\ \hline
		$\bm{U}_b$ & Basal sliding velocity \\ \hline
		$\bm{U}$ & Horizontal ice flow velocity \\ \hline
		$\bm{Q}$ & Ice volume flux \\ \hline
		$\rho$ & Ice density \\ \hline
		$p$ & Index in the Glen power law \\
		\hline
	\end{tabular}
	\caption{Quantities entering the model}
	\label{magnitudes}
\end{table}

As already mentioned by Jouvet \& Bueler~\cite{JouvBuel2012}, the expression of $\bm{Q}$ in~\eqref{flux-2} exhibits a degenerate behaviour at the free boundary, in the sense that the gradient norm power blows up as $h$ gets close to the free boundary of $\Omega$. In this direction, we refer the reader to the article~\cite{SayagWorster2013} for a laboratory verification of the boundary degeneracy.

We also observe that the model we are considering follows the formulation originally proposed by J.W. Glen~\cite{Glen1970}, according to which ice was regarded as a viscous fluid.
The celebrated model proposed by Hibler, itself inspired by the article of M.D. Coon~\cite{Coon1974}, is on the one hand \emph{less precise} than Glen's model as the ice velocity $\bm{U}$ has a simplified expression. On the other hand, it achieves the goal of describing in the same instance the behaviours of ice as a solid and as a fluid.

The operation of averaging considerably simplifies the expression of the ice volume flux $\bm{Q}$, while the coupling of the shallow ice equation for the averaged ice thickness with the mechanical equation spares the effort of expressing the ice flow velocity $\bm{U}$ in terms of the ice thickness.

In order to overcome the difficulty arising as a result of the gradient degeneracy in the vicinity of the free boundary, we introduce the following transformation, originally suggested in~\cite{Raviart1967} (see also~\cite{Diaz2002}):
\begin{equation}
	\label{transfo}
	H:=u^{(p-1)/2p}.
\end{equation}

Observe that $H > 0$ in $[0,T] \times \overline{\Omega}$ if and only if $u > 0$ in $[0,T] \times \overline{\Omega}$, and that $H(t,x)=0$ if and only if $u(t,x)=0$, $x\in\overline{\Omega}$ and $t\in[0,T]$. As a result of the transformation~\eqref{transfo}, we \emph{first} obtain, formally,
\begin{equation}
	\label{dHdt}
	\dfrac{\partial H}{\partial t}=\dfrac{\partial}{\partial t}(|u|^{\frac{3p-1}{2p}-2} \, u),
\end{equation}
\emph{secondly}, we obtain
\begin{equation}
	\label{ice-soft}
	\int_{b}^{h} A(s) (h-s)^p \dd s=u^{\frac{(p+1)(p-1)}{2p}} \int_{0}^{1} A(x,b+u^{\frac{p-1}{2p}} \, s') (1-s')^p \dd s',
\end{equation}
and, \emph{third}, we obtain:
\begin{equation}
	\label{grad}
	|\nabla h|^{p-2} \nabla h=\left(\dfrac{p-1}{2p}\right)^{p-1} u^{\frac{(-p-1)(p-2)}{2p}} \left|\nabla u +u^{(p+1)/2p} \nabla b\right|^{p-2} u^{\frac{-p-1}{2p}} \left(\nabla u +u^{(p+1)/2p} \nabla b\right).
\end{equation}

For all $x \in \Omega$ and all $u=u(t,x) \in \mathbb{R}$, we define the vector field $\bm{\Phi}:\overline{\Omega} \times \mathbb{R} \to \mathbb{R}^2$ by:
\begin{equation}
	\label{Phi}
	\bm{\Phi}=\bm{\Phi}(x,u):=-\dfrac{2p}{p-1} u^{(p+1)/2p} \nabla b.
\end{equation}

Plugging~\eqref{Phi} into~\eqref{grad} gives:
\begin{equation}
	\label{grad-2}
	|\nabla h|^{p-2} \nabla h=\left(\dfrac{p-1}{2p}\right)^{p-1} u^{\frac{(-p-1)(p-2)}{2p}} \left|\nabla u -\bm{\Phi}\right|^{p-2} u^{\frac{-p-1}{2p}} \left(\nabla u -\bm{\Phi}\right).
\end{equation}

Finally, for all $x \in \Omega$ and all $u \in \mathbb{R}$, we define the vector field $\bm{\Psi}:\overline{\Omega} \times \mathbb{R} \to \mathbb{R}^2$ by,
\begin{equation}
	\label{psi}
	\bm{\Psi}=\bm{\Psi}(x,u):= (h-b) \bm{U}_b,
\end{equation}
and the function $\tilde{a}:[0,T] \times \overline{\Omega} \times \mathbb{R} \to \mathbb{R}$ by:
\begin{equation}
	\label{aNEW}
	\tilde{a}(t,x,u):=a(t,x,b+u^{(p-1)/2p}).
\end{equation}

Inserting~\eqref{ice-soft}--\eqref{aNEW} into~\eqref{flux-2} gives:
\begin{equation}
	\label{flux-3}
	-\bm{Q}=2\left(\rho g \dfrac{p-1}{2p}\right)^{p-1} \left[\int_{0}^{1} A(x,b+u^{(p-1)/2p} s') (1-s')^p \dd s'\right] |\nabla u -\bm{\Phi}|^{p-2} (\nabla u -\bm{\Phi}) -\bm{\Psi}.
\end{equation}

For the sake of brevity, we define the function $\mu:\overline{\Omega} \times \mathbb{R} \to \mathbb{R}$ as a function of $u$ by:
\begin{equation}
	\label{mu-def}
	\mu(x,u):=2\left(\rho g \dfrac{p-1}{2p}\right)^{p-1} \left[\int_{0}^{1} A(x,b+u^{(p-1)/2p} s') (1-s')^p \dd s'\right].
\end{equation}

Observe that plugging~\eqref{mu-def} into~\eqref{flux-3} gives:
\begin{equation}
	\label{flux-4}
	-\bm{Q}=\mu(x,u) |\nabla u -\bm{\Phi}|^{p-2} (\nabla u -\bm{\Phi}) -\bm{\Psi}.
\end{equation}

Plugging~\eqref{dHdt}--\eqref{aNEW} and~\eqref{flux-4} into the boundary value problem~\eqref{BVP}, gives that the \emph{new} unknown $u$ satisfies the following unilateral boundary value problem: \newline
\emph{Find} $u \ge 0$ \emph{defined on} $[0,T]\times \overline{\Omega}$ \emph{satisfying:}
\begin{equation}
	\label{BVPu}
	\begin{cases}
		&\dfrac{\partial}{\partial t}(|u|^{\frac{3p-1}{2p}-2} \, u)-\nabla \cdot \left(\mu(x,u) |\nabla u -\bm{\Phi}|^{p-2} (\nabla u -\bm{\Phi}) -\bm{\Psi}\right) =\tilde{a}, \textup{ in } \Omega_t^{+} \textup{ for all } t\in (0,T),\\
		\\
		&\dfrac{\partial}{\partial t}(|u|^{\frac{3p-1}{2p}-2} \, u)-\nabla \cdot \left(\mu(x,u) |\nabla u -\bm{\Phi}|^{p-2} (\nabla u -\bm{\Phi}) -\bm{\Psi}\right) \ge \tilde{a}, \textup{ in } \Omega_t^{-} \textup{ for all } t\in (0,T),\\
		\\
		&u(t,\cdot)=0, \quad\textup{ on }\partial\Omega \textup{ for all }t \in (0,T),\\
		\\
		&u(0,\cdot)=u_0:=H_0^{(2p/(p-1))} \ge 0.
	\end{cases}
\end{equation}

Similarly to~\eqref{weak-4}, the weak formulation associated with the boundary value problem~\eqref{BVPu} is expected to take the following form: \emph{Find} $u \ge 0$ \emph{satisfying the variational inequality:}
\begin{equation}
	\label{weak-u}
	\begin{aligned}
		&\int_{0}^{T} \int_{\Omega} \left(\dfrac{\partial}{\partial t}(|u|^{\frac{3p-1}{2p}-2} \, u)\right)(v-u) \dd x \dd t\\
		&\quad-\int_{0}^{T} \int_{\Omega} \mu(x,u) |\nabla u -\bm{\Phi}|^{p-2} (\nabla u -\bm{\Phi}) \cdot \nabla(v-u) \dd x\dd t 
		\\&\ge \int_{0}^{T} \int_{\Omega} \tilde{a} (v-u) \dd x \dd t +\int_{0}^{T}\int_{\Omega} \bm{\Psi} \cdot \nabla(v-u) \dd x \dd t,
	\end{aligned}
\end{equation}
\emph{for all test functions} $v$ \emph{such that} $v \ge b$ \emph{for a.e.} $(t,x) \in (0,T) \times \Omega$, a\emph{nd satisfying the following boundary conditions along} $\partial\Omega$
$$
u=0\quad\mbox{ on }\partial\Omega,
$$
and satisfying the following initial condition
$$
u(0,x) = u_0(x)=[H_0(x)]^{(2p/(p-1))}, \quad \mbox{ for a.e. }x\in \Omega.
$$

Once again, note that, at least for the time being, the rigorous concept of solution has not been defined yet. This task will be carried out in Section~\ref{Sec:2}, in which the function spaces where the solutions are going to be sought will be defined as well as the requirements a function has to meet in order to be regarded as a solution.

\section{Weak formulation of the unilateral boundary value problem}
\label{Sec:2}

Let $\Omega \subset \mathbb{R}^2$ be a domain. Let $2.8 \le p \le 5$ as suggested by the experimental results in~\cite{GoldKohl2001}. For the sake of brevity, let
\begin{equation}
	\label{alpha}
	\alpha:=\dfrac{3p-1}{2p},
\end{equation}
and observe that $1 < \alpha <2 <p+1$. By the Rellich-Kondrachov theorem (cf., e.g., Lions \& Magenes~\cite{LioMagI} and the references therein) the following chain of embeddings holds:
\begin{equation}
	\label{emb}
	W_0^{1,p}(\Omega) \hookrightarrow\hookrightarrow \mathcal{C}^0(\overline{\Omega}) \hookrightarrow L^\alpha(\Omega).
\end{equation}

Define the set 
\begin{equation}
	\label{K}
	K:=\{v \in W_0^{1,p}(\Omega); v \ge 0 \textup{ in }\overline{\Omega}\}.
\end{equation}

The weak formulation of the unilateral boundary value problem and the definition of the concept of solution will be recovered as a result of a constructive proof, based on the penalty method. The idea of using the penalty method to derive the existence of solutions of time-dependent contact problems was first used by Bock and Jarusek in~\cite{BockJar2009,BockJar2013} in the context of dynamic contact problems and was further developed in the recent article~\cite{BockJarSil2016}. A time-dependent obstacle problem in the case where the target space is finite-dimensional was also addressed in the context of the study of biological systems in~\cite{PWDT3D}. The finiteness of the target space dimension played a fundamental role to improve the results obtained by Bock, Jarusek and Silhavy in~\cite{BockJarSil2016}.

For the sake of simplicity, we make the following assumptions on the geometry of the problem:
\begin{enumerate}[$(H1)$]
	\item $b = 0$ in $\overline{\Omega}$, i.e., the lithosphere is flat;\label{H1}
	\item The ice softness $A$ is assumed to be independent of the ice sheet height $h$ and time. As a result, the function $\mu$ defined in~\eqref{mu-def} is independent of the ice sheet height $h$ as well. Moreover, there exist two positive constants $\mu_1$ and $\mu_2$ such that $\mu_1 \le \mu(x) \le \mu_2$ for all $x \in \overline{\Omega}$;\label{H2}
	\item The basal velocity $\bm{U}_b = \bm{0}$;\label{H3}
	\item The function $\tilde{a}$ defined in~\eqref{aNEW} is independent of the ice sheet height and is of class $W^{1,p}(0,T;\mathcal{C}^0(\overline{\Omega}))$.\label{H4}
\end{enumerate}

Let us now establish some preparatory results. The first result collects some properties of the negative part operator.

\begin{lemma}
	\label{lem:1}
	Let $\mathscr{O} \subset \mathbb{R}^m$, with $m\ge 1$ an integer, be an open set. The operator $-\{\cdot\}^{-}:L^2(\mathscr{O}) \to L^2(\mathscr{O})$ defined by
	$$
	f \in L^2(\mathscr{O}) \mapsto -\{f\}^{-}:=\min\{f,0\} \in L^2(\mathscr{O}),
	$$
	is monotone, bounded and Lipschitz continuous with Lipschitz constant equal to $1$.
\end{lemma}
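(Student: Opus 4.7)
My plan is to reduce each of the three claims to a pointwise statement about the scalar function $g : \mathbb{R} \to \mathbb{R}$ defined by $g(s) := -\min\{s,0\}$, so that $g(s) = 0$ for $s \geq 0$ and $g(s) = -s$ for $s < 0$. Thus $g$ is continuous, piecewise linear, nonnegative, and satisfies $g(0)=0$. The operator $-\{\cdot\}^{-}$ is then simply the Nemytskii (superposition) operator $T$ associated with $g$, namely $(Tf)(x) := g(f(x))$, and it maps $L^{2}(\omega)$ into itself because $|g(s)| \leq |s|$ for every $s \in \mathbb{R}$.

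For the Lipschitz continuity with constant $1$, the key ingredient is the pointwise bound $|g(s) - g(t)| \leq |s - t|$ for all $s,t \in \mathbb{R}$. This is obtained by a short case analysis on the pair $(\mathrm{sgn}\, s,\mathrm{sgn}\, t)$: (i) if $s,t \geq 0$ both values of $g$ vanish and the inequality is trivial; (ii) if $s,t < 0$ one has $|g(s) - g(t)| = |{-s}-({-t})| = |s - t|$ directly; (iii) if $s \geq 0 > t$ (the remaining case being symmetric) one has $|g(s) - g(t)| = -t \leq s - t = |s - t|$. Squaring and integrating over $\omega$ then yields $\|Tf - Th\|_{L^{2}(\omega)} \leq \|f - h\|_{L^{2}(\omega)}$. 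Boundedness follows immediately by specialising $h \equiv 0$ and using $T(0) = 0$: this gives $\|Tf\|_{L^{2}(\omega)} \leq \|f\|_{L^{2}(\omega)}$, so bounded subsets of $L^{2}(\omega)$ are mapped to bounded subsets.

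For monotonicity I would again proceed pointwise, examining the sign of $(g(s) - g(t))(s - t)$ in the same three regions of $\mathbb{R}^{2}$ as above, and then integrating over $\omega$ to transfer the resulting pointwise sign to the $L^{2}$-pairing $\langle Tf - Th,\, f - h\rangle_{L^{2}(\omega)}$. There is no genuine technical obstacle here: the whole proof reduces to an elementary case analysis followed by integration, and I do not expect any step to cause real difficulty. The only mild point of care is the bookkeeping of sign conventions between $\{\cdot\}^{-}$, $-\{\cdot\}^{-}$, and the usual positive/negative parts, so that the monotonicity statement is formulated in the sense intended by the authors (the one that will be invoked later when $-\{\cdot\}^{-}$ enters the penalty scheme enforcing $u \geq 0$).
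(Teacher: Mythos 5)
Your reduction to the scalar Nemytskii map $g$ is the right move, and for the Lipschitz and boundedness claims your plan is correct and in fact cleaner than the paper's: you get $1$-Lipschitz from the elementary pointwise case analysis $|g(s)-g(t)|\le|s-t|$ and then boundedness for free from $g(0)=0$, whereas the paper proves Lipschitz via the identity $-\{f\}^-=\tfrac{f-|f|}{2}$, the reverse triangle inequality $\bigl||f|-|g|\bigr|\le|f-g|$ and Minkowski, and establishes boundedness by a separate H\"older/duality computation.

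However, there is a genuine gap in your monotonicity step. The scalar function you extracted from the lemma statement, $g(s)=-\min\{s,0\}$, equals $0$ on $[0,\infty)$ and $-s$ on $(-\infty,0)$, so it is \emph{nonincreasing}; your planned case analysis of $(g(s)-g(t))(s-t)$ therefore yields $\le 0$ in every region (for instance in case (ii), $(-s+t)(s-t)=-(s-t)^2$), and integrating gives $\langle Tf-Th,\,f-h\rangle_{L^2(\omega)}\le 0$, i.e.\ \emph{anti}-monotonicity — the opposite of the claim. The discrepancy is a sign typo in the lemma statement: the paper's own proof consistently treats $-\{f\}^-$ as $\tfrac{f-|f|}{2}=\min\{f,0\}$ (equivalently $\{f\}^-=-\min\{f,0\}=\tfrac{|f|-f}{2}$), whose associated scalar map $s\mapsto\min\{s,0\}$ is nondecreasing, and it is \emph{that} operator which is monotone. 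One can cross-check this against the way Lemma~\ref{lem:1} is later used in the coercivity estimate for $A_\kappa$ in Theorem~\ref{thm:1}, which needs $-\int_\Omega \{v\}^-\,v\,\mathrm{d}x\ge 0$ — true precisely with the convention $\{v\}^-=-\min\{v,0\}$. So you anticipated the right caveat about sign bookkeeping, but the resolution must actually be carried out: replace your $g$ by $\tilde g(s):=\min\{s,0\}$ (so $-\{f\}^-$ is the Nemytskii operator of $\tilde g$) before running the monotonicity case analysis; with that fix your argument closes.
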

\begin{proof}
	Let $f$ and $g$ be arbitrarily given in $L^2(\mathscr{O})$. In what follows, sets of the form $\{f \ge 0\}$ read
	$$
	\{x \in \mathscr{O}; f(x) \ge 0\}.
	$$
	
	Recall that the negative part of a function is also given by
	$$
	f^{-}=\dfrac{|f|-f}{2}.
	$$
	
	We have that
	\begin{align*}
		&\int_{\mathscr{O}}\left\{(-\{f\}^{-})-(-\{g\}^{-})\right\}(f-g) \dd x\\
		&\ge \int_{\mathscr{O}} |-\{f\}^{-}|^2 \dd x + \int_{\mathscr{O}} |-\{g\}^{-}|^2 \dd x -2\int_{\{f \le 0\} \cap \{g \le 0\}} (-\{f\}^{-})(-\{g\}^{-}) \dd x\\
		& \ge \int_{\{f \le 0\} \cap \{g \le 0\}} \left|(-\{f\}^{-})-(-\{g\}^{-})\right|^2 \dd x \ge 0,
	\end{align*}
	and the monotonicity is thus established.
	
	Let us show that the operator under consideration is bounded, in the sense that it maps bounded sets onto bounded sets. Let $\mathscr{F} \subset L^2(\mathscr{O})$ be bounded. By H\"older's inequality, we have that for each $f \in \mathscr{F}$,
	$$
	\sup_{\substack{v \in L^2(\mathscr{O})\\ v \neq 0}} \dfrac{\left|\int_{\mathscr{O}} (-\{f\}^{-}) v \dd x\right|}{\|v\|_{L^2(\mathscr{O})}}
	\le \|f\|_{L^2(\mathscr{O})},
	$$
	and the boundedness of $\mathscr{F}$ implies the boundedness of the supremum. The boundedness property is thus established.
	
	Finally, to establish the Lipschitz continuity property, evaluate
	\begin{align*}
		&\left(\int_{\mathscr{O}} |(-\{f\}^{-})-(-\{g\}^{-})|^2 \dd x\right)^{1/2}=\left(\int_{\mathscr{O}} \left|\dfrac{f-|f|}{2}-\dfrac{g-|g|}{2}\right|^2 \dd x\right)^{1/2}\\
		&=\dfrac{1}{2}\left(\int_{\mathscr{O}} |(f-g)-(|f|-|g|)|^2 \dd x\right)^{1/2} \le \dfrac{1}{2}\{\|f-g\|_{L^2(\mathscr{O})}+\||f|-|g|\|_{L^2(\mathscr{O})}\}\\
		&=\dfrac{1}{2}\|f-g\|_{L^2(\mathscr{O})}+\dfrac{1}{2}\left(\int_{\mathscr{O}}\left||f|-|g|\right|^2 \dd x\right)^{1/2}
		\le \dfrac{1}{2}\|f-g\|_{L^2(\mathscr{O})}+\dfrac{1}{2}\left(\int_{\mathscr{O}}|f-g|^2 \dd x\right)^{1/2}\\
		&=\|f-g\|_{L^2(\mathscr{O})},
	\end{align*}
	where the inequality holds thanks to the Minkowski inequality. In conclusion, we have shown that
	$$
	\|(-\{f\}^{-})-(-\{g\}^{-})\|_{L^2(\mathscr{O})} \le \|f-g\|_{L^2(\mathscr{O})},
	$$
	and the arbitrariness of $f$ and $g$ gives the desired Lipschitz continuity property. This completes the proof.
\end{proof}

The following lemma presents an inequality that was originally proved in 1973 by Pedersen in the context of $C^\ast$-algebras (cf., e.g., Proposition~1.3.8 of~\cite{Pedersen2018}) to show that the mapping $t \mapsto t^\beta$, with $0<\beta<1$, is monotone, where $t$ here denotes a generic operator. This inequality plays a crucial role in the forthcoming analysis. We hereby provide an alternate elementary proof proof which solely makes use of convex analysis in the special case where $t \in \mathbb{R}$.

\begin{lemma}
	\label{lem:2-0}
	Let $0<\beta \le 1$. Then
	$$
	\left||x|^\beta - |y|^\beta\right| \le |x-y|^\beta,\quad\textup{ for all } x, y \in \mathbb{R}.
	$$
\end{lemma}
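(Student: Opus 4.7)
The plan is to reduce the claim to a standard subadditivity property of the map $t \mapsto t^\beta$ on $[0,\infty)$, and then prove that property using concavity, which is the convex-analytic ingredient the authors advertise.

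First I would assume without loss of generality that $|x| \ge |y|$ (otherwise swap the roles of $x$ and $y$). Since the function $t \mapsto t^\beta$ is monotone nondecreasing on $[0,\infty)$, this gives $|x|^\beta \ge |y|^\beta$, so the absolute value on the left-hand side can be removed, and the target inequality becomes
\begin{equation*}
|x|^\beta \le |y|^\beta + |x-y|^\beta .
\end{equation*}
The usual triangle inequality supplies $|x| \le |y| + |x-y|$, and monotonicity of $t \mapsto t^\beta$ promotes this to $|x|^\beta \le (|y|+|x-y|)^\beta$. Thus the whole statement will be proved once I establish the subadditivity
\begin{equation*}
(a+b)^\beta \le a^\beta + b^\beta \qquad \text{for all } a,b \ge 0,\ 0<\beta \le 1 .
\end{equation*}

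For the subadditivity I would use concavity of $g(t):=t^\beta$ on $[0,\infty)$, which holds precisely because $\beta \le 1$. If either $a$ or $b$ is zero the inequality is trivial, so I may assume $a,b>0$ and set $s:=a+b>0$. Writing $a = \tfrac{a}{s}\cdot s + \tfrac{b}{s}\cdot 0$ and $b = \tfrac{b}{s}\cdot s + \tfrac{a}{s}\cdot 0$ and applying the concavity inequality to each, I get
\begin{equation*}
g(a) \ge \tfrac{a}{s} g(s) + \tfrac{b}{s} g(0),
\qquad
g(b) \ge \tfrac{b}{s} g(s) + \tfrac{a}{s} g(0).
\end{equation*}
Adding these two inequalities and using $g(0)=0$ gives $a^\beta + b^\beta \ge g(s) = (a+b)^\beta$, which is exactly the subadditivity required.

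There is no real obstacle here; the only thing to be careful about is the case $\beta=1$, which is degenerate (the concavity is then an equality) but the argument still works, and the case where $x$ and $y$ have opposite signs, which is already absorbed into the estimate $|x| \le |y|+|x-y|$. The proof therefore amounts to: reduction by the triangle inequality and monotonicity of $t^\beta$, followed by the concavity-based subadditivity of $t^\beta$, giving the full chain
\begin{equation*}
\bigl||x|^\beta-|y|^\beta\bigr| \;=\; |x|^\beta - |y|^\beta \;\le\; (|y|+|x-y|)^\beta - |y|^\beta \;\le\; |x-y|^\beta .
\end{equation*}
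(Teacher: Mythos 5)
Your proof is correct. The overall strategy is the same as the paper's: both reduce the desired inequality to a one-variable statement about the map $t \mapsto t^\beta$, and both finish with an appeal to the triangle inequality together with monotonicity of that map. Where you differ is in how you dispose of the one-variable statement. You establish the subadditivity $(a+b)^\beta \le a^\beta + b^\beta$ on $[0,\infty)$ directly, via a two-point Jensen argument exploiting concavity of $t^\beta$ for $\beta\le 1$. The paper instead normalizes by setting $t=y/x\in[0,1]$, reducing the claim to $1-t^\beta \le (1-t)^\beta$ on $[0,1]$ (which is precisely your subadditivity restricted to $a+b=1$, so the two one-variable inequalities are equivalent), and then proves it by applying twice the elementary estimate $t\le t^\beta$ for $t\in[0,1]$, $0<\beta\le 1$: namely $(1-t)^\beta \ge 1-t \ge 1-t^\beta$. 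The paper's route avoids invoking concavity explicitly and is a touch more elementary; yours more literally exhibits the ``convex analysis'' the authors advertise just before the lemma, since the concavity of the power map is exactly the ingredient you use, whereas in the paper's version the convexity content is implicit in the comparison $t\le t^\beta$. Both are sound, and neither buys anything the other cannot; it is a matter of taste whether one prefers the clean concavity/Jensen argument or the bare-hands power monotonicity.
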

\begin{proof}
	If $\beta=1$ or at least one between $x$ and $y$ is equal to zero then the conclusion is trivially true.
	So, let us consider the case where $0<\beta<1$, and assume, without loss of generality, that $|x| \ge |y| > 0$, and let
	$$
	t:=\dfrac{|y|}{|x|} \in [0,1].
	$$
	
	Let us first show that
	$$
	1-t^\beta \le (1-t)^\beta,\quad\textup{ for all } t\in [0,1].
	$$
	
	Since $0<\beta \le1$ and since $t \in [0,1]$, we have that $t^\beta \ge t$. Therefore, we have that two consecutive applications of this estimate lead to 
	$$
	(1-t)^\beta \ge 1-t \ge 1-t^\beta,
	$$
	thus proving the inequality:
	$$
	\left||x|^\beta - |y|^\beta\right|\le \left||x|-|y|\right|^\beta,\quad\textup{ for all }x,y\in\mathbb{R}.
	$$
	
	An application of the triangle inequality to the right-hand side of the estimate above and the monotonicity of the mapping $\xi \ge 0 \mapsto \xi^\beta \ge 0$ lead to the sought inequality and completes the proof.
\end{proof}

We hereby provide a complementary result to Lemma~\ref{lem:2-0}.

\begin{lemma}
	\label{lem:2}
	Let $\sigma>1$. Then:
	$$
	|x-y|^\sigma \le \left|x^\sigma - y^\sigma\right|, \quad\textup{ for all } x,y \ge 0.
	$$
\end{lemma}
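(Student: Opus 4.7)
The inequality is a direct counterpart to Lemma \ref{lem:2-0}, now exploiting the superadditivity (rather than subadditivity) of the power function $t\mapsto t^\alpha$ on $[0,\infty)$ when $\alpha\ge 1$. My plan is to reduce to the one-variable inequality $(s-1)^\alpha\le s^\alpha-1$ for $s\ge 1$ and verify it by a short calculus argument or, equivalently, by convexity.

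First I would dispose of the trivial cases: since $x,y\ge 0$ we have $|x|=x$ and $|y|=y$, and the statement is symmetric in $x$ and $y$, so I may assume $x\ge y\ge 0$ without loss of generality. If $y=0$ both sides reduce to $x^\alpha$. So it remains to treat $x\ge y>0$.

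Second, I would set $s:=x/y\ge 1$ and divide the desired inequality by $y^\alpha$, reducing to showing
\[
(s-1)^\alpha \le s^\alpha - 1 \qquad \text{for all } s\ge 1.
\]
To prove this, define $g(s):=s^\alpha-1-(s-1)^\alpha$ on $[1,\infty)$. Then $g(1)=0$, and
\[
g'(s)=\alpha s^{\alpha-1}-\alpha(s-1)^{\alpha-1}\ge 0,
\]
since $s\ge s-1\ge 0$ and the exponent $\alpha-1\ge 0$ makes $t\mapsto t^{\alpha-1}$ nondecreasing on $[0,\infty)$. Hence $g$ is nondecreasing on $[1,\infty)$ and $g(s)\ge g(1)=0$, which is exactly what is needed. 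Multiplying back by $y^\alpha$ and recalling $x,y\ge 0$ yields
\[
|x-y|^\alpha=(x-y)^\alpha\le x^\alpha-y^\alpha=\bigl||x|^\alpha-|y|^\alpha\bigr|,
\]
which is the desired estimate.

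There is no real obstacle here; the only point worth emphasising is the contrast with Lemma \ref{lem:2-0}: when $0<\beta\le 1$ the function $t\mapsto t^\beta$ is subadditive, giving $|x^\beta-y^\beta|\le|x-y|^\beta$, whereas for $\alpha\ge 1$ the same function is superadditive on $[0,\infty)$ (equivalently, $a^\alpha+b^\alpha\le (a+b)^\alpha$ for $a,b\ge 0$, applied with $a=y$, $b=x-y$), which is exactly the inequality we just established. This is the sole mechanism driving the estimate, and the nonnegativity hypothesis $x,y\ge 0$ is essential, since the conclusion would fail for opposite signs.
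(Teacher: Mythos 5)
Your proof is correct and essentially identical to the paper's: both reduce to the one-variable inequality $(t-1)^\alpha\le t^\alpha-1$ for $t\ge 1$ after dividing by $y^\alpha$, and both verify it by the same calculus argument on the difference function (you use $g(s)=s^\alpha-1-(s-1)^\alpha$ with $g'\ge 0$, the paper uses its negative $f$ with $f'\le 0$). The added remark about superadditivity is a nice gloss but does not change the mechanism.
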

\begin{proof}
	Assume, without loss of generality, that $x \ge y > 0$, and let
	$$
	t:=\dfrac{x}{y} \ge 1.
	$$
	
	Therefore, the sought inequality is equivalent to proving that
	$$
	(t-1)^\sigma \le t^\sigma -1,\quad\textup{ for all }t \ge 1.
	$$
	
	Consider the function $f:[1,\infty) \to \mathbb{R}$ defined by
	$$
	f(t):=(t-1)^\sigma-t^\sigma+1.
	$$
	
	Observe that
	$$
	f'(t)=\sigma (t-1)^{\sigma-1}-\sigma t^{\sigma-1}=\sigma((t-1)^{\sigma-1}-t^{\sigma-1}).
	$$
	
	Since $\sigma-1>0$ and since $0\le t-1 <t$, the monotonicity of the power operator gives that $(t-1)^{\sigma-1} \le t^{\sigma-1}$, so that $f'(t)\le 0$ for all $t>1$. Since $f(1)=0$, we infer that $f(t)\le 0$ for all $t \ge 1$ and the proof is complete.
\end{proof}

Thanks to Lemma~\ref{lem:2}, we can establish the following result, thanks to which we will be able to define a sound initial condition for the variational formulation we will be considering.

\begin{lemma}
	\label{lem:3}
	Let $\Omega$ be a domain in $\mathbb{R}^2$, let $T>0$ be given, let $1< \sigma <2$, and let $u \in L^\infty(0,T;L^\sigma(\Omega))$ be such that:
	$$
	\dfrac{\dd}{\dd t}\left(|u|^\frac{\sigma-2}{2}u\right) \in L^2(0,T;L^2(\Omega)).
	$$
	
	Then, we have that $\left(|u|^\frac{\sigma-2}{2} u\right) \in \mathcal{C}^0([0,T];L^2(\Omega))$ and that $u \in\mathcal{C}^0([0,T];L^\sigma(\Omega))$.
\end{lemma}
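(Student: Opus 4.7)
The plan is to reduce everything to the auxiliary function
$$
w(t,x):=|u(t,x)|^{(\alpha-2)/2}u(t,x) = \mathrm{sgn}(u(t,x))\,|u(t,x)|^{\alpha/2},
$$
which satisfies the key pointwise identity $|w|^{2}=|u|^{\alpha}$. The hypothesis $u\in L^\infty(0,T;L^\alpha(\Omega))$ therefore translates into $w\in L^\infty(0,T;L^2(\Omega))$, and by assumption $\dot w\in L^2(0,T;L^2(\Omega))$. The first conclusion then follows from the classical fact (a consequence of $w(t)=w(s)+\int_s^t\dot w(\tau)\dd\tau$ in the Bochner sense) that any $L^1$-time-integrable Hilbert-valued derivative forces a representative that is absolutely continuous into the target space. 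I would cite Lions--Magenes for this to avoid redoing the argument, and conclude $w\in\mathcal{C}^0([0,T];L^2(\Omega))$.

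For the second conclusion I would exploit Lemma~\ref{lem:2} with exponent $\alpha>1$ (noting $\alpha\in(1,2)$) applied pointwise to $|u(t_1,x)|$ and $|u(t_2,x)|$, which gives
$$
\bigl||u(t_1,x)|-|u(t_2,x)|\bigr|^{\alpha}\;\le\;\bigl||u(t_1,x)|^{\alpha}-|u(t_2,x)|^{\alpha}\bigr|.
$$
Integrating over $\Omega$ and using $|u|^{\alpha}=|w|^{2}$ on the right produces a difference of squares,
$$
\bigl\||u(t_1)|-|u(t_2)|\bigr\|_{L^\alpha(\Omega)}^{\alpha}\;\le\;\int_{\Omega}\bigl(|w(t_1)|+|w(t_2)|\bigr)\bigl||w(t_1)|-|w(t_2)|\bigr|\dd x.
$$
A Cauchy--Schwarz estimate, together with the trivial bound $\bigl||w(t_1)|-|w(t_2)|\bigr|\le|w(t_1)-w(t_2)|$, bounds the right-hand side by
$$
\bigl(\|w(t_1)\|_{L^2(\Omega)}+\|w(t_2)\|_{L^2(\Omega)}\bigr)\,\|w(t_1)-w(t_2)\|_{L^2(\Omega)}.
$$
The first factor is uniformly bounded in $t_1,t_2$ (because $w\in L^\infty(0,T;L^2(\Omega))$, which also follows a posteriori from continuity), and the second factor vanishes as $|t_1-t_2|\to 0$ thanks to the first conclusion. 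This yields $|u|\in\mathcal{C}^0([0,T];L^\alpha(\Omega))$.

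The only genuinely delicate point is the step that promotes $w$ from a function with $\dot w\in L^2(0,T;L^2(\Omega))$ to a continuous representative on the closed interval $[0,T]$: one must justify that the Bochner primitive is well defined and that the initial/endpoint values make sense, which I would do by invoking the standard embedding $\{v:v,\dot v\in L^2(0,T;L^2(\Omega))\}\hookrightarrow \mathcal{C}^0([0,T];L^2(\Omega))$. Everything else is an essentially algebraic manipulation combined with Lemma~\ref{lem:2}, and I expect no further obstacles.
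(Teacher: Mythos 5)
Your proof is correct and follows essentially the same route as the paper: identify $w=|u|^{(\alpha-2)/2}u$, deduce $w\in H^1(0,T;L^2(\Omega))\hookrightarrow\mathcal{C}^0([0,T];L^2(\Omega))$ from the hypotheses, and then use Lemma~\ref{lem:2} together with $|u|^\alpha=|w|^2$, a difference-of-squares factorization, Cauchy--Schwarz, and the reverse triangle inequality to pass continuity from $w$ to $|u|$ in $L^\alpha(\Omega)$.
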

\begin{proof}
	Observe that
	$$
	\int_{0}^{T} \int_{\Omega} \left||u|^\frac{\sigma-2}{2}u\right|^2 \dd x \dd t= \int_{0}^{T} \int_{\Omega} |u|^\sigma \dd x \dd t.
	$$
	
	Since it was assumed that $u \in L^\infty(0,T;L^\sigma(\Omega))$, then the latter integral is finite. Hence, combining the latter with the assumption on the distributional derivative in time gives
	$$
	\left(|u|^\frac{\sigma-2}{2}u\right) \in H^1(0,T;L^2(\Omega)) \hookrightarrow \mathcal{C}^0([0,T];L^2(\Omega)).
	$$
	
	To show that $u \in \mathcal{C}^0([0,T];L^\sigma(\Omega))$, consider the function $h:\mathbb{R} \to \mathbb{R}$ defined by
	\begin{equation*}
		h(x)=|x|^\frac{2-\sigma}{\sigma} x,\quad\textup{ for all } x\in\mathbb{R}.
	\end{equation*}
	
	Clearly, the fact that $1<\sigma<2$ implies that $h(0)=0$, or equivalently, that $x=0$ is a removable discontinuity for $h$.
	The Nemytskii's operator associated with the function $h$ clearly satisfies the assumptions of Theorem~1.27 in~\cite{Roubicek2005} and we thus have that $u \in \mathcal{C}^0([0,T];L^\sigma(\Omega))$ as it was to be proved.
\end{proof}

It is actually possible to establish that $|u| \in \mathcal{C}^0([0,T];L^\sigma(\Omega))$ by resorting to a technique independent of the critical continuity theorem for Nemytskii's operators.
Such a technique will be exploited to establish the uniform convergence in Lemma~\ref{lem:5}.
We have to show that for each $t_0 \in [0,T]$,
$$
\lim_{t\to t_0} \int_{\Omega} ||u(t)|-|u(t_0)||^\sigma \dd x =0.
$$

By Lemma~\ref{lem:2}, we have that the Cauchy-Schwarz inequality and the triangle inequality give
\begin{align*}
	&\int_{\Omega} ||u(t)|-|u(t_0)||^\sigma \dd x \le \int_{\Omega} \left||u(t)|^\sigma-|u(t_0)|^\sigma\right| \dd x\\
	&=\int_{\Omega}\left|\left|\left||u(t)|^\frac{\sigma-2}{2}u(t)\right|^{2/\sigma}\right|^\sigma-\left|\left||u(t_0)|^\frac{\sigma-2}{2}u(t_0)\right|^{2/\sigma}\right|^\sigma\right| \dd x\\
	&=\int_{\Omega}\left|\left||u(t)|^\frac{\sigma-2}{2}u(t)\right|^2-\left||u(t_0)|^\frac{\sigma-2}{2}u(t_0)\right|^2\right| \dd x\\
	&=\int_{\Omega} \left|\left(\left||u(t)|^\frac{\sigma-2}{2}u(t)\right|-\left||u(t_0)|^\frac{\sigma-2}{2}u(t_0)\right|\right) \cdot \left(\left||u(t)|^\frac{\sigma-2}{2}u(t)\right|+\left||u(t_0)|^\frac{\sigma-2}{2}u(t_0)\right|\right)\right|\dd x\\
	&\le \left\|\left||u(t)|^\frac{\sigma-2}{2}u(t)\right|-\left||u(t_0)|^\frac{\sigma-2}{2}u(t_0)\right|\right\|_{L^2(\Omega)}
	\left\|\left||u(t)|^\frac{\sigma-2}{2}u(t)\right|+\left||u(t_0)|^\frac{\sigma-2}{2}u(t_0)\right|\right\|_{L^2(\Omega)}\\
	&\le \left\||u(t)|^\frac{\sigma-2}{2}u(t)-|u(t_0)|^\frac{\sigma-2}{2}u(t_0)\right\|_{L^2(\Omega)}
	\left\|\left||u(t)|^\frac{\sigma-2}{2}u(t)\right|+\left||u(t_0)|^\frac{\sigma-2}{2}u(t_0)\right|\right\|_{L^2(\Omega)}.
\end{align*}

Observe that the continuity in $[0,T]$ of $\left(|u|^\frac{\sigma-2}{2}u\right)$ implies the uniform boundedness of the second factor, as well as that the first factor tends to zero as $t \to t_0$.

The next lemma establishes that the supremum can be interchanged with a monotonically increasing continuous extended-real-valued function.

\begin{lemma}
	\label{lem:4} 
	Let $f :\mathbb{R} \to \overline{\mathbb{R}}$ be a monotonically increasing and continuous function. Then, given any $S \subset \mathbb{R}$, 
	$$
	\sup_{x \in S} f(x) = f(\sup S).
	$$
\end{lemma}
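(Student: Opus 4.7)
The plan is to prove the two inequalities $\sup_{x\in S} f(x)\le f(\sup S)$ and $f(\sup S)\le \sup_{x\in S} f(x)$ separately, using monotonicity for the first and continuity together with approximation by a sequence for the second. Throughout, I would tacitly extend $f$ to $\overline{\mathbb{R}}$ by setting $f(+\infty):=\lim_{x\to+\infty}f(x)$ and $f(-\infty):=\lim_{x\to-\infty}f(x)$; both limits exist in $\overline{\mathbb{R}}$ because $f$ is monotone, and this extension is consistent with the continuity hypothesis whenever $\sup S$ fails to lie in $\mathbb{R}$.

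For the first inequality, I would simply note that for every $x\in S$ one has $x\le \sup S$, so monotonicity of $f$ gives $f(x)\le f(\sup S)$; taking the supremum over $x\in S$ yields $\sup_{x\in S}f(x)\le f(\sup S)$. This step is immediate and uses no continuity.

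For the reverse inequality, I would choose a sequence $(x_n)\subset S$ with $x_n\uparrow \sup S$ (such a sequence exists by the definition of supremum, whether $\sup S$ is finite or $+\infty$; the degenerate case $S=\emptyset$, where $\sup S=-\infty$, makes the stated equality $-\infty=-\infty$ trivially hold and can be discarded). By continuity of $f$ at $\sup S$ — using the extension described above when $\sup S=+\infty$ — one has $f(x_n)\to f(\sup S)$ in $\overline{\mathbb{R}}$. Since each $f(x_n)\le \sup_{y\in S}f(y)$, passing to the limit in $n$ gives $f(\sup S)\le \sup_{y\in S}f(y)$.

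The only mild subtlety, and the step that most deserves care, is the interpretation of continuity at infinity: if $\sup S=+\infty$, one must justify $f(x_n)\to f(+\infty)$, which I would handle by invoking monotonicity so that $f(x_n)$ is a monotone sequence whose supremum is by definition $\lim_{x\to +\infty}f(x)=:f(+\infty)$. Once the two inequalities are combined, the desired equality $\sup_{x\in S}f(x)=f(\sup S)$ follows, completing the proof.
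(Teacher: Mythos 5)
Your proof is correct and follows essentially the same two-step argument as the paper: monotonicity of $f$ gives $\sup_{x\in S} f(x)\le f(\sup S)$, and a maximizing sequence $(x_n)\subset S$ with $x_n\to\sup S$ combined with continuity of $f$ gives the reverse inequality. The only difference is that you make explicit the conventions needed when $\sup S=\pm\infty$ or $S=\emptyset$, which the paper's proof leaves implicit; this is a welcome bit of care but not a different approach.
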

\begin{proof}
	By the definition of supremum, we can find a maximizing sequence $\{s_k\}_{k=1}^\infty \subset S$ for which
	$$
	s_k \to \sup S,\quad\textup{ as }k\to\infty.
	$$
	
	By the assumed continuity of $f$, we have that
	$$
	\lim_{k \to\infty} f(s_k) =f(\sup S).
	$$
	
	Since $f(s_k) \le \sup_{x \in S}f(x)$ then the inequality $f(\sup S) \le \sup_{x \in S}f(x)$ is obviously true, on the one hand. On the other hand, since $f$ is monotone, we have that
	$$
	f(x) \le f(\sup S), \quad \textup{ for all }x \in S.
	$$
	
	Therefore, passing to the supremum on the left-hand side, we obtain that
	$$
	\sup_{x \in S} f(x) \le \sup_{x \in S} f(\sup S)=f(\sup S),
	$$
	and the proof is complete.
\end{proof}

The next lemma establishes a convergence property for sequences of functions enjoying the regularities announced in Lemma~\ref{lem:3}.

\begin{lemma}
	\label{lem:5}
	Let $\Omega$ be a domain in $\mathbb{R}^2$, let $T>0$ be given, let $1< \sigma <2$.
	Let $\{u_k\}_{k=1}^\infty \subset L^\infty(0,T;W_0^{1,p}(\Omega))$ be such that:
	$$
	\left\{|u_k|^\frac{\sigma-2}{2} u_k\right\}_{k=1}^\infty \textup{ strongly converges in }\mathcal{C}^0([0,T];L^2(\Omega)).
	$$
	
	Then $\{|u_k|\}_{k=1}^\infty$ strongly converges in $\mathcal{C}^0([0,T];L^\sigma(\Omega))$.
\end{lemma}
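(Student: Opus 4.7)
The plan is to mimic the proof of Lemma~\ref{lem:3}, but applied to the differences $u_k-u_\ell$ rather than $u(t)-u(t_0)$, and to conclude via completeness of $\mathcal{C}^0([0,T];L^\alpha(\Omega))$. Since each $u_k(t)(x)\ge 0$ almost everywhere, the hypothesis simplifies: we have $|u_k|^{(\alpha-2)/2}u_k = u_k^{\alpha/2}$ and $|u_k|=u_k$, so the assumed strong convergence reads $\{u_k^{\alpha/2}\}\to v$ in $\mathcal{C}^0([0,T];L^2(\Omega))$ for some $v$. The goal is to prove that $\{u_k\}$ is Cauchy in $\mathcal{C}^0([0,T];L^\alpha(\Omega))$; since the target space is complete, this will deliver the desired strong convergence.

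The key pointwise inequality is provided by Lemma~\ref{lem:2} (valid since $\alpha>1$ and $u_k,u_\ell\ge 0$):
\begin{equation*}
|u_k(t,x)-u_\ell(t,x)|^\alpha\le \bigl|u_k(t,x)^\alpha - u_\ell(t,x)^\alpha\bigr|.
\end{equation*}
Writing $u_k^\alpha = (u_k^{\alpha/2})^2$ and factoring the difference of squares gives
\begin{equation*}
\bigl|u_k^\alpha - u_\ell^\alpha\bigr| = \bigl|u_k^{\alpha/2} - u_\ell^{\alpha/2}\bigr|\cdot\bigl(u_k^{\alpha/2}+u_\ell^{\alpha/2}\bigr).
\end{equation*}
Integrating over $\Omega$ and applying the Cauchy--Schwarz inequality yields, for every $t\in[0,T]$,
\begin{equation*}
\|u_k(t)-u_\ell(t)\|_{L^\alpha(\Omega)}^\alpha
\le \bigl\|u_k^{\alpha/2}(t)-u_\ell^{\alpha/2}(t)\bigr\|_{L^2(\Omega)}\;\bigl\|u_k^{\alpha/2}(t)+u_\ell^{\alpha/2}(t)\bigr\|_{L^2(\Omega)}.
\end{equation*}

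Taking the supremum in $t\in[0,T]$, the first factor on the right is controlled by $\|u_k^{\alpha/2}-u_\ell^{\alpha/2}\|_{\mathcal{C}^0([0,T];L^2(\Omega))}$, which tends to zero as $k,\ell\to\infty$ by the Cauchy property of the convergent sequence $\{u_k^{\alpha/2}\}$ in $\mathcal{C}^0([0,T];L^2(\Omega))$. The second factor is bounded uniformly in $k,\ell,t$, because convergent sequences are bounded: $\sup_{k}\|u_k^{\alpha/2}\|_{\mathcal{C}^0([0,T];L^2(\Omega))}<\infty$. Consequently
\begin{equation*}
\sup_{t\in[0,T]}\|u_k(t)-u_\ell(t)\|_{L^\alpha(\Omega)}^\alpha\longrightarrow 0 \quad\text{as } k,\ell\to\infty,
\end{equation*}
so $\{u_k\}=\{|u_k|\}$ is Cauchy, hence convergent, in $\mathcal{C}^0([0,T];L^\alpha(\Omega))$.

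There is no serious obstacle in this argument; the only point requiring a little care is to ensure that the right-hand side is controlled \emph{uniformly in} $t$, which is why the Cauchy--Schwarz estimate is applied pointwise in $t$ before taking the supremum. The hypothesis $u_k\in L^\infty(0,T;W^{1,p}_0(\Omega))$ is used only to justify manipulating the nonnegative powers $u_k^{\alpha/2}$ pointwise and to give meaning to $|u_k|=u_k$; it plays no quantitative role in the estimate itself.
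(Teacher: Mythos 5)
Your proof is correct and follows essentially the same route as the paper's: apply Lemma~\ref{lem:2} pointwise, factor the resulting difference of $\alpha$-th powers as a difference of squares, apply Cauchy--Schwarz, and exploit the uniform boundedness of the ``sum'' factor coming from the assumed strong convergence to verify the Cauchy property in $\mathcal{C}^0([0,T];L^\alpha(\Omega))$. The only stylistic difference is that you exploit the nonnegativity of $u_k$ at the outset to drop the absolute-value signs, whereas the paper carries them along and uses the same nonnegativity implicitly when writing the sum factor; both are equally valid.
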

\begin{proof}
	Since the space $\mathcal{C}^0([0,T];L^\sigma(\Omega))$ is complete, it suffices to show that  the sequence $\{u_k\}_{k=1}^\infty$ is a Cauchy sequence in $\mathcal{C}^0([0,T];L^\sigma(\Omega))$, i.e., we have to show that:
	
	\begin{equation}
		\label{Cauchyseq}
		\lim_{k,s \to\infty} \sup_{t\in[0,T]} \left(\int_{\Omega} ||u_k(t)|-|u_s(t)||^\sigma\dd x\right)^{1/\sigma} =0.
	\end{equation}
	
	Using Lemma~\ref{lem:2}, H\"older's inequality and Lemma~\ref{lem:4}, we have that
	\begin{align*}
		&\sup_{t\in[0,T]} \left(\int_{\Omega} ||u_k(t)|-|u_s(t)||^\sigma\dd x\right)^{1/\sigma} 
		\le \sup_{t\in[0,T]} \left(\int_{\Omega} \left||u_k(t)|^\sigma-|u_s(t)|^\sigma\right|\dd x\right)^{1/\sigma}\\
		&= \sup_{t\in[0,T]} \left(\int_{\Omega} \left|\left||u_k(t)|^\frac{\sigma-2}{2}u_k(t)\right|^2-\left||u_s(t)|^\frac{\sigma-2}{2}u_s(t)\right|^2\right|\dd x\right)^{1/\sigma}\\
		&\le \sup_{t\in[0,T]} \bigg\{\left(\int_{\Omega} \left||u_k(t)|^\frac{\sigma-2}{2}u_k(t)-|u_s(t)|^\frac{\sigma-2}{2}u_s(t)\right|^2\dd x\right)^{1/2}\\
		&\qquad\cdot \left(\int_{\Omega} \left||u_k(t)|^\frac{\sigma-2}{2}u_k(t)+|u_s(t)|^\frac{\sigma-2}{2}u_s(t)\right|^2\dd x\right)^{1/2}\bigg\}^{1/\sigma}\\
		&=\left\{\sup_{t\in[0,T]}\left[\left\||u_k(t)|^\frac{\sigma-2}{2}u_k(t)-|u_s(t)|^\frac{\sigma-2}{2}u_s(t)\right\|_{L^2(\Omega)}
		\left\||u_k(t)|^\frac{\sigma-2}{2}u_k(t)+|u_s(t)|^\frac{\sigma-2}{2}u_s(t)\right\|_{L^2(\Omega)}\right]\right\}^{1/\sigma}.
	\end{align*}
	
	The assumed strong convergence implies that the second factor is uniformly bounded with respect to $t \in [0,T]$. Hence, the latter term is less or equal than:
	\begin{align*}
		&\bigg\{\left[\sup_{t\in[0,T]}\left\||u_k(t)|^\frac{\sigma-2}{2}u_k(t)-|u_s(t)|^\frac{\sigma-2}{2}u_s(t)\right\|_{L^2(\Omega)}\right]\\
		&\qquad \cdot \left[\sup_{t\in[0,T]}
		\left\||u_k(t)|^\frac{\sigma-2}{2}u_k(t)+|u_s(t)|^\frac{\sigma-2}{2}u_s(t)\right\|_{L^2(\Omega)}\right]\bigg\}^{1/\sigma}.
	\end{align*}
	
	Since the second factor is uniformly bounded with respect to $t \in [0,T]$ and since the first factor tends to zero as $k,s \to \infty$ by the assumed strong convergence, we finally obtain the sought convergence~\eqref{Cauchyseq}. This completes the proof.
\end{proof}

The next lemma establishes an immersion that will be used in the proof of the existence of the solution.

\begin{lemma}
	\label{lem:6}
	Let $T>0$ be given and let $X$ be a normed vector space with the Radon-Nikodym property (cf., e.g., \cite{Raymond2002}). 
	Let $\langle\langle \cdot, \cdot\rangle\rangle$ denote the duality between $\left(\mathcal{C}^0([0,T];X)\right)^\ast$ and $\mathcal{C}^0([0,T];X)$, and let $\langle \cdot , \cdot \rangle$ denote the duality between $X^\ast$ and $X$.
	
	The mapping
	$$
	\Upsilon:L^1(0,T;X^\ast) \to \left(\mathcal{C}^0([0,T];X)\right)^\ast
	$$
	defined by 
	$$
	\langle\langle \Upsilon u,v\rangle\rangle:=\int_{0}^{T} \langle u, v\rangle \dd t,\quad \textup{ where } u \in L^1(0,T;X^\ast) \textup{ and } v \in \mathcal{C}^0([0,T];X),
	$$
	is linear, continuous and injective. Besides, if $\{u_k\}_{k=1}^\infty$ is bounded in $L^1(0,T;X^\ast)$ then $\{\Upsilon u_k\}_{k=1}^\infty$ is bounded in $\left(\mathcal{C}^0([0,T];X)\right)^\ast$.
\end{lemma}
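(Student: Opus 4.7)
The argument splits into three parts: linearity is trivial, continuity together with the boundedness claim is a one-line estimate, and injectivity is the only genuine step.

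\emph{Linearity} of $\Upsilon$ is immediate from the bilinearity of the duality pairing $\langle\cdot,\cdot\rangle$ and the linearity of the Bochner integral.

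\emph{Continuity and the boundedness claim.} For any $u \in L^1(0,T;X^*)$ and $v \in \mathcal{C}^0([0,T];X)$ I would simply estimate, using $|\langle u(t), v(t)\rangle| \leq \|u(t)\|_{X^*}\|v(t)\|_X$ and pulling the supremum in $t$ out of the integral,
$$
|\langle\langle \Upsilon u, v \rangle\rangle| \leq \int_{0}^{T} \|u(t)\|_{X^*}\|v(t)\|_X \dd t \leq \|v\|_{\mathcal{C}^0([0,T];X)}\,\|u\|_{L^1(0,T;X^*)}.
$$
Taking the supremum over all $v$ with $\|v\|_{\mathcal{C}^0([0,T];X)} \leq 1$ yields the operator-norm bound $\|\Upsilon u\|_{(\mathcal{C}^0([0,T];X))^*} \leq \|u\|_{L^1(0,T;X^*)}$, which proves the continuity of $\Upsilon$ and, applied term by term to a bounded sequence $\{u_k\} \subset L^1(0,T;X^*)$, delivers the boundedness assertion at the end of the statement for free.

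\emph{Injectivity (the main obstacle).} Assume $\Upsilon u = 0$. For each fixed $x \in X$ and each $\varphi \in \mathcal{D}(0,T)$ the test function $v(t):=\varphi(t)\,x$ belongs to $\mathcal{C}^0([0,T];X)$, so
$$
\int_{0}^{T} \varphi(t) \langle u(t), x\rangle \dd t = \langle\langle \Upsilon u, v\rangle\rangle = 0.
$$
Since $t \mapsto \langle u(t), x\rangle$ lies in $L^1(0,T)$, the fundamental lemma of the calculus of variations produces a Lebesgue null set $N_x \subset (0,T)$ such that $\langle u(t), x\rangle = 0$ for every $t \notin N_x$. The real difficulty is to promote this pointwise-in-$x$ statement---a priori involving uncountably many null sets $N_x$---to the single-null-set conclusion $u(t)=0$ in $X^*$ for a.e.\ $t$. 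This is where the Radon-Nikodym framework enters: by Pettis' measurability theorem, the Bochner integrability of $u \in L^1(0,T;X^*)$ forces $u$ to be essentially separably valued, so its range is contained, outside a null set, in some separable closed subspace $Z \subset X^*$. Picking a countable dense sequence $\{z_n\} \subset Z$ and, for each $n$, an element $x_n \in X$ with $\|x_n\|_X = 1$ and $\langle z_n, x_n\rangle \geq \tfrac{1}{2}\|z_n\|_{X^*}$, a short approximation argument (pass to a subsequence of $\{z_n\}$ converging to any $z \in Z$ annihilated by all $x_n$, and use the reverse estimate $\|z_{n_k}\|_{X^*} \leq 2\langle z_{n_k}, x_{n_k}\rangle$) shows that $\{x_n\}$ separates points of $Z$. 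Setting $N:=\bigcup_{n} N_{x_n}$ we obtain a single Lebesgue null set off which $\langle u(t), x_n\rangle = 0$ for every $n$, which forces $u(t)=0$ in $Z \subset X^*$ for every $t \notin N$. Hence $u=0$ in $L^1(0,T;X^*)$, and $\Upsilon$ is injective.
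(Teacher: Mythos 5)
Your proof is correct, and it is cleaner than the paper's in one place while taking a genuinely different route in another. For continuity and the boundedness claim, you establish the explicit operator-norm bound $\|\Upsilon u\|_{(\mathcal{C}^0([0,T];X))^\ast} \le \|u\|_{L^1(0,T;X^\ast)}$ directly; the paper instead proves sequential continuity and then, for the boundedness-of-a-bounded-sequence claim, invokes the Banach--Steinhaus theorem, which is redundant given the linear operator is already known to be bounded. For injectivity, your argument diverges substantially: the paper moves the Bochner integral inside the duality pairing (equating $\int_0^T \langle u_1-u_2, \varphi w\rangle\,\mathrm{d}t$ with $\langle \int_0^T (u_1-u_2)\varphi\,\mathrm{d}t, w\rangle$, citing the Radon--Nikodym property via Theorem~8.13 of Leoni), and then applies the vector-valued fundamental lemma of the calculus of variations to conclude $u_1 = u_2$ in $L^1(0,T;X^\ast)$; you instead work pointwise in $x \in X$, obtain a null set $N_x$ for each $x$, and then use Pettis' measurability theorem to produce an essentially separable range, a countable norming family $\{x_n\}$, and a single null set $N = \bigcup_n N_{x_n}$ off which $u(t)=0$. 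Your norming-sequence construction ($\langle z_n,x_n\rangle \ge \tfrac12\|z_n\|_{X^\ast}$ plus the reverse estimate applied to a subsequence converging to a candidate annihilated element) is sound. One small inaccuracy: you frame Pettis' theorem as ``where the Radon--Nikodym framework enters,'' but Pettis' measurability theorem is independent of RNP and holds for Bochner measurable maps into any Banach space; in fact your injectivity argument never uses RNP at all, which means it is strictly more general than the paper's. (The RNP hypothesis in the lemma is instead needed for the later identification $(\mathcal{C}^0([0,T];X))^\ast \simeq \mathcal{M}([0,T];X^\ast)$ via the Dinculeanu--Zinger theorem, not for the present lemma in your formulation.)
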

\begin{proof}
	Let us check that $\Upsilon$ is linear. For each $u_1,u_2 \in L^1(0,T;X^\ast)$ we have that
	\begin{align*}
		\langle\langle\Upsilon(u_1+u_2),v\rangle\rangle &= \int_{0}^{T} \langle u_1+u_2,v \rangle \dd t = \int_{0}^{T} \langle u_1,v \rangle \dd t+\int_{0}^{T} \langle u_2,v \rangle \dd t\\
		&=\langle\langle\Upsilon u_1,v\rangle\rangle+\langle\langle\Upsilon u_2,v\rangle\rangle =\langle\langle \Upsilon u_1 + \Upsilon u_2,v\rangle\rangle,
	\end{align*}
	for all $v \in \mathcal{C}^0([0,T];X)$.
	
	Similarly, for each $\alpha \neq 0$ (for $\alpha =0$ the conclusion is immediate) and each $u \in L^1(0,T;X^\ast)$, we have that
	$$
	\langle\langle\Upsilon(\alpha u),v\rangle\rangle=\int_{0}^{T} \alpha \langle u,v\rangle\dd t=\langle\langle\Upsilon u,\alpha v\rangle\rangle
	=\langle\langle\alpha \Upsilon u,v\rangle\rangle,
	$$
	for all $v\in \mathcal{C}^0([0,T];X)$, and the sought linearity property is thus proved.
	
	To show the continuity of the mapping $\Upsilon$, let $\{u_k\}_{k=1}^\infty$ be such that $u_k \to u$ in $L^1(0,T;X^\ast)$. By H\"older's inequality in Lebesgue-Bochner spaces (cf., e.g., \cite{Yosida1980}), and the fact that the norm $\|\cdot\|_{L^\infty(0,T;X)}$ coincides with $\|\cdot\|_{\mathcal{C}^0([0,T];X)}$ along the elements of the space $\mathcal{C}^0([0,T];X)$, we have that:
	$$
	\sup_{\substack{v \in \mathcal{C}^0([0,T];X)\\ v \neq 0}}\dfrac{\left|\langle\langle \Upsilon u_k -\Upsilon u , v \rangle\rangle\right|}{\|v\|_{\mathcal{C}^0([0,T];X)}} \le \dfrac{\int_{0}^{T} |\langle u_k - u,v\rangle| \dd t}{\|v\|_{\mathcal{C}^0([0,T];X)}} \le \|u_k-u\|_{L^1(0,T;X^\ast)} \to 0 \textup{ as }k \to\infty.
	$$
	
	The boundedness of the mapping $\Upsilon$ is thus a direct consequence of the linearity and the continuity.
	
	To prove that $\Upsilon$ is injective, assume that $\Upsilon u_1 = \Upsilon u_2$ in $\left(\mathcal{C}^0([0,T];X)\right)^\ast$ then, for each $v \in \mathcal{C}^0([0,T];X)$, we have that
	$$
	\int_{0}^{T} \langle u_1 - u_2,v\rangle \dd t =0.
	$$
	
	In particular, consider functions of the form $v(t):=\varphi(t) w$, with $\varphi \in \mathcal{D}(0,T)$ and $w \in X$. Fix $\varphi\in\mathcal{D}(0,T)$ and let $w \in X$ vary arbitrarily. Since $X$ has the Radon-Nikodym property, we have that (cf., e.g., Theorem~8.13 of~\cite{Leoni2017}) the latter becomes
	$$
	0=\int_{0}^{T} \langle u_1 - u_2,v\rangle \dd t=\left\langle \int_{0}^{T} (u_1(t)-u_2(t)) \varphi(t) \dd t, w \right\rangle, \quad\textup{ for all }w\in X.
	$$
	
	This in turn implies that
	$$
	\int_{0}^{T} (u_1(t)-u_2(t)) \varphi(t) \dd t = 0 \textup{ in }X^\ast.
	$$
	
	Since this conclusion is independent of the choice of $\varphi \in \mathcal{D}(0,T)$, an application of the Fundamental Lemma of the Calculus of Variations (cf., e.g., \cite{Yosida1980}) gives that $u_1=u_2$ in $L^1(0,T;X^\ast)$ and the injectivity is thus proved.

\end{proof}

The conclusion of Lemma~\ref{lem:6} is that the space $L^1(0,T;X^\ast)$ can be identified with a subspace of $(\mathcal{C}^0([0,T];X))^\ast$. 

The proof of existence of solutions hinges on a compactness result proved by Dubinskii~\cite{Dubinski1965} (see also~\cite{BarretSuli2011} for some improvements and corrections), as well as other results proved by Raviart in the article~\cite{Raviart1970} that we recall here below.

\begin{lemma}[Lemma~1.1 of~\cite{Raviart1970}]
	\label{lem:7}
	Let $1< r <\infty$ and let $r'$ denote the H\"older conjugate exponent of $r$. The following inequalities hold
	\begin{align*}
		(|\xi|^{r-2}\xi-|\eta|^{r-2}\eta)\xi &\ge \dfrac{1}{r'}(|\xi|^r-|\eta|^r),\\
		(|\xi|^{r-2}\xi-|\eta|^{r-2}\eta)(\xi-\eta)&\ge C \left(|\xi|^{(r-2)/2}\xi-|\eta|^{(r-2)/2}\eta\right)^2, \textup{ for some }C=C(r)>0,
	\end{align*}
	for all $\xi, \eta \in \mathbb{R}$.
	\qed
\end{lemma}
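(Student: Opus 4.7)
The plan is to handle the two inequalities separately by reducing each to a one-variable calculation in $\mathbb{R}$.

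For the first inequality, I would expand the left-hand side as $|\xi|^{r-2}\xi\cdot\xi - |\eta|^{r-2}\eta\cdot\xi = |\xi|^r - |\eta|^{r-2}\eta\,\xi$ and recall that $\frac{1}{r'} = \frac{r-1}{r}$, so that the claim becomes the equivalent statement
\[
|\eta|^{r-2}\eta\,\xi \le \frac{1}{r}|\xi|^r + \frac{1}{r'}|\eta|^r.
\]
Since $|\eta|^{r-2}\eta\,\xi \le |\eta|^{r-1}|\xi|$, it suffices to apply the scalar Young inequality $ab \le \frac{a^r}{r} + \frac{b^{r'}}{r'}$ to $a = |\xi|$ and $b = |\eta|^{r-1}$, using the identity $(r-1)r' = r$. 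This step is routine.

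For the second inequality, I would introduce the two odd, monotonically increasing functions $f(t) := |t|^{r-2}t$ and $g(t) := |t|^{(r-2)/2}t$, whose derivatives satisfy the pointwise algebraic identity
\[
(g'(s))^2 = \frac{r^2}{4}|s|^{r-2} = \frac{r^2}{4(r-1)}\,f'(s), \qquad s \ne 0.
\]
Assuming without loss of generality that $\xi \ge \eta$, I would write $g(\xi) - g(\eta) = \int_\eta^\xi g'(s)\dd s$ and $f(\xi) - f(\eta) = \int_\eta^\xi f'(s)\dd s$ (which are well-defined integrals, even across $s = 0$ when $1 < r < 2$, because $|s|^{r-2}$ is locally integrable for $r > 1$), and apply the Cauchy--Schwarz inequality:
\[
\bigl(g(\xi)-g(\eta)\bigr)^2 \le (\xi - \eta)\int_\eta^\xi (g'(s))^2 \dd s = \frac{r^2}{4(r-1)}(\xi-\eta)\bigl(f(\xi) - f(\eta)\bigr).
\]
Rearranging yields the desired estimate with the explicit constant $C = 4(r-1)/r^2$.

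The only mildly delicate point will be the integrability issue at the origin in the range $1 < r < 2$, where $g'$ and $f'$ have a singularity. This is handled by observing that the singularity is of order $|s|^{(r-2)/2}$ in $g'$ and $|s|^{r-2}$ in $f'$, both of which are integrable near zero since $r > 1$; alternatively, one can truncate a small neighborhood of the origin, apply Cauchy--Schwarz there, and pass to the limit using the fundamental theorem of calculus for absolutely continuous functions. The case $\xi = \eta$ is trivial, and the case $\xi < \eta$ follows by swapping the roles of $\xi$ and $\eta$, since both sides of the inequality are symmetric under this swap. This completes the sketch.
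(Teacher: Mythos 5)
Your proposal is correct and, in fact, more informative than what the paper supplies: the paper simply states Lemma~\ref{lem:7} with a \verb|\qed| and cites Young's inequality for the first estimate and Simon's paper for the second, without reproducing any argument. Your treatment of the first inequality via $\frac{1}{r'}=\frac{r-1}{r}$ and the scalar Young inequality $ab\le a^r/r+b^{r'}/r'$ with $a=|\xi|$, $b=|\eta|^{r-1}$ is exactly the intended route. For the second inequality, your Cauchy--Schwarz argument on the primitive representation is a clean, self-contained derivation that also yields the explicit constant $C=4(r-1)/r^2$: the identities $g'(s)=\frac{r}{2}|s|^{(r-2)/2}$, $f'(s)=(r-1)|s|^{r-2}$, and hence $(g'(s))^2=\frac{r^2}{4(r-1)}f'(s)$ are correct, and the local integrability of $|s|^{r-2}$ near the origin for $r>1$ (together with the absolute continuity of $f$ and $g$ on compact intervals) justifies writing $f(\xi)-f(\eta)$ and $g(\xi)-g(\eta)$ as integrals of their derivatives even when the interval straddles $0$. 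The symmetry observation disposing of the case $\xi<\eta$ is also right, since both sides of the second inequality are unchanged under $\xi\leftrightarrow\eta$. In short: the paper delegates the proof to citations; you have filled in that gap correctly, with the first part coinciding with the paper's stated source and the second part giving an explicit constant that the paper does not record.
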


The first inequality in this lemma is a direct consequence of Young's inequality (cf., e.g., \cite{Young1912}), while the second inequality was proved by Simon in the article~\cite{Simon1978}.

The next preliminary result we recall, is a generalized integration-by-parts formula, whose proof hinges on the Lebesgue theorem (cf., e.g., Theorem~2.11-3 of~\cite{PGCLCSHA}).

\begin{lemma}[Lemma~1.2 of~\cite{Raviart1970}]
	\label{lem:8}
	Let $\Omega$ be a domain in $\mathbb{R}^2$, and let $T>0$.
	Let $\alpha$ and $p$ be two real numbers greater than $1$, and let $v$ be a function such that
	\begin{align*}
		v \in L^p(0,T;W_0^{1,p}(\Omega)) \cap \mathcal{C}^0([0,T];L^\alpha(\Omega)),\\
		\dfrac{\dd}{\dd t}(|v|^{\alpha-2} v) \in L^{p'}(0,T;W^{-1,p'}(\Omega)).
	\end{align*}
	
	Then, the following formula holds
	$$
	\int_{0}^{T}\left\langle \dfrac{\dd}{\dd t}(|v|^{\alpha-2} v), v \right\rangle \dd t = \dfrac{\|v(T)\|_{L^\alpha(\Omega)}^\alpha}{\alpha'} - \dfrac{\|v(0)\|_{L^\alpha(\Omega)}^\alpha}{\alpha'},
	$$
	where $\langle \cdot, \cdot\rangle$ denotes the duality between $W^{-1,p'}(\Omega)$ and $W_0^{1,p}(\Omega)$.
	\qed
\end{lemma}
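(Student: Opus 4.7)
The plan is a three-stage argument: formally derive the identity via chain and product rules, then regularize $v$ in time so that this formal computation applies rigorously, and finally pass to the limit using Lebesgue's dominated convergence theorem. For a function $v$ smooth in time the product rule gives
$$
\tfrac{d}{dt}\bigl(|v|^{\alpha-2}v \cdot v\bigr) = \tfrac{d}{dt}\bigl(|v|^{\alpha-2}v\bigr)\, v + |v|^{\alpha-2}v\,\tfrac{dv}{dt},
$$
and since $|v|^{\alpha-2}v\cdot v = |v|^\alpha$ while the scalar chain rule yields $\tfrac{d}{dt}|v|^\alpha = \alpha|v|^{\alpha-2}v\,\tfrac{dv}{dt}$, rearrangement produces the pointwise-in-time identity
$$
\left\langle \tfrac{d}{dt}(|v|^{\alpha-2}v), v\right\rangle = \tfrac{1}{\alpha'}\tfrac{d}{dt}\|v\|_{L^\alpha(\Omega)}^\alpha.
$$
Integration on $(0,T)$ then yields the claimed formula, so the remaining task is to justify this chain rule under the weaker hypotheses of the lemma.

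To this end I would extend $v$ to $\tilde v:\mathbb{R}\to W_0^{1,p}(\Omega)\cap L^\alpha(\Omega)$ by the constant values $\tilde v(t)=v(0)$ for $t\le 0$ and $\tilde v(t)=v(T)$ for $t\ge T$, so the relevant regularity is preserved on any compact interval. Convolving in time with a standard mollifier $\rho_\delta$ supported in $(-\delta,\delta)$ produces $v_\delta := \rho_\delta * \tilde v\in \mathcal{C}^\infty(\mathbb{R}; W_0^{1,p}(\Omega))$. Standard mollification estimates give $v_\delta \to v$ strongly in $L^p(0,T;W_0^{1,p}(\Omega))$ and in $\mathcal{C}^0([0,T];L^\alpha(\Omega))$, whence $v_\delta(0)\to v(0)$ and $v_\delta(T)\to v(T)$ in $L^\alpha(\Omega)$. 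Because $v_\delta$ is smooth in $t$, the formal computation applies rigorously and produces
$$
\int_0^T \left\langle \tfrac{d}{dt}\bigl(|v_\delta|^{\alpha-2}v_\delta\bigr), v_\delta\right\rangle dt = \tfrac{1}{\alpha'}\bigl(\|v_\delta(T)\|_{L^\alpha(\Omega)}^\alpha - \|v_\delta(0)\|_{L^\alpha(\Omega)}^\alpha\bigr).
$$

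The last step is to pass to the limit as $\delta\downarrow 0$. The right-hand side converges to $\tfrac{1}{\alpha'}(\|v(T)\|_{L^\alpha(\Omega)}^\alpha - \|v(0)\|_{L^\alpha(\Omega)}^\alpha)$ thanks to the endpoint convergence in $L^\alpha(\Omega)$. The delicate part is the left-hand side. Continuity of the Nemytskii map $w\mapsto |w|^{\alpha-2}w$ from $L^\alpha(\Omega)$ into $L^{\alpha'}(\Omega)$ together with the convergence $v_\delta\to v$ in $\mathcal{C}^0([0,T];L^\alpha(\Omega))$ yields $|v_\delta|^{\alpha-2}v_\delta \to |v|^{\alpha-2}v$ in $\mathcal{C}^0([0,T];L^{\alpha'}(\Omega))$, which forces convergence of the distributional time derivative and allows identification with $\tfrac{d}{dt}(|v|^{\alpha-2}v)$; combined with the hypothesis $\tfrac{d}{dt}(|v|^{\alpha-2}v)\in L^{p'}(0,T;W^{-1,p'}(\Omega))$, the strong convergence $v_\delta\to v$ in $L^p(0,T;W_0^{1,p}(\Omega))$, H\"older's inequality in Bochner spaces, and an application of Lebesgue's dominated convergence theorem (Theorem~2.11-3 of~\cite{PGCLCSHA}), the duality pairing on the left-hand side passes to the limit and the identity follows. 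The main obstacle lies precisely here: because mollification does not commute with the nonlinearity $w\mapsto |w|^{\alpha-2}w$, the quantity $\tfrac{d}{dt}(|v_\delta|^{\alpha-2}v_\delta)$ is a priori different from $\rho_\delta * \tfrac{d}{dt}(|v|^{\alpha-2}v)$, and the reconciliation of the two—via the continuity of the superposition operator and dominated convergence in the Bochner duality—is the genuinely delicate analytical step that powers the whole argument.
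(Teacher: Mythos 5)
The paper does not actually exhibit a proof of this lemma: it is imported verbatim from Raviart (it is Lemma~1.2 of~\cite{Raviart1970}) and closed with a box, the only pointer being the remark that the proof ``hinges on the Lebesgue theorem,'' together with the companion inequality of Lemma~\ref{lem:7} that Raviart proves alongside it. So the comparison below is with the Raviart mechanism, not a displayed argument in the text.

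Your regularisation-in-$v$ scheme has a genuine gap at the very step you single out but do not close. You correctly observe that mollification fails to commute with $w\mapsto|w|^{\alpha-2}w$, so that $\partial_t(|v_\delta|^{\alpha-2}v_\delta)$ is not $\rho_\delta * \partial_t(|v|^{\alpha-2}v)$ and cannot inherit the hypothesis $\partial_t(|v|^{\alpha-2}v)\in L^{p'}(0,T;W^{-1,p'}(\Omega))$. You then assert that continuity of the Nemytskii map plus $v_\delta\to v$ in $\mathcal{C}^0([0,T];L^\alpha(\Omega))$ reconciles the two through dominated convergence. That does not hold: the convergence $|v_\delta|^{\alpha-2}v_\delta\to|v|^{\alpha-2}v$ in $\mathcal{C}^0([0,T];L^{\alpha'}(\Omega))$ gives convergence of the time derivatives only in the distributional sense in $\mathcal{D}'(0,T;L^{\alpha'}(\Omega))$, and supplies no $\delta$-uniform bound on $\partial_t(|v_\delta|^{\alpha-2}v_\delta)$ in any Bochner space dualising against $L^p(0,T;W_0^{1,p}(\Omega))$ --- indeed the composed time derivative is expected to blow up like $\delta^{-1}$. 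Without such a bound there is neither a dominating function nor even a weak limit, so the pairing $\int_0^T\langle\partial_t(|v_\delta|^{\alpha-2}v_\delta),v_\delta\rangle\,dt$ cannot be carried to the limit by any version of Lebesgue's theorem. That pairing is the entire analytical content of the lemma, which is exactly why this is not a routine integration by parts. The Raviart proof sidesteps this: it forms time-difference quotients, never differentiating the composed function, and squeezes $\tfrac{1}{h}\int_0^T\int_\Omega |v(t)|^{\alpha-2}v(t)\,(v(t+h)-v(t))\,dx\,dt$ between the two one-sided Young-type bounds $(|\xi|^{\alpha-2}\xi-|\eta|^{\alpha-2}\eta)\,\xi\ge\tfrac{1}{\alpha'}(|\xi|^\alpha-|\eta|^\alpha)$ applied to $(v(t),v(t+h))$ in both orders --- this is precisely the first estimate of Lemma~\ref{lem:7}. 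Both one-sided limits are identified, via $v\in\mathcal{C}^0([0,T];L^\alpha(\Omega))$ and the Lebesgue differentiation theorem, with $\tfrac{1}{\alpha'}\big(\|v(T)\|_{L^\alpha(\Omega)}^\alpha-\|v(0)\|_{L^\alpha(\Omega)}^\alpha\big)$, while the difference quotient of $|v|^{\alpha-2}v$ itself converges to $\partial_t(|v|^{\alpha-2}v)$ in $L^{p'}(0,T;W^{-1,p'}(\Omega))$ precisely because that derivative exists there by hypothesis, and then pairs against $v\in L^p(0,T;W_0^{1,p}(\Omega))$. That squeeze via the Young inequality, rather than commuting a regularisation past the nonlinearity, is the missing mechanism in your draft.
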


Finally, we recall Dubinskii's compactness lemma. The proof of Dubinskii's compactness lemma hinges on a preliminary lemma of independent importance, that we here recall for the sake of clarity. This lemma was originally proved by Dubinskii in the article~\cite{Dubinski1965} (cf. Lemma~2 of~\cite{Dubinski1965}) and was later improved by Barrett and S\"uli in the article~\cite{BarretSuli2011} (cf. Lemma~2.2 of~\cite{BarretSuli2011}).

\begin{lemma}
	\label{Dub:lemma}
	Let $A_1$ be a normed linear space. Let $S \subset A_1$ be such that $\lambda S \subset S$, for all $\lambda \in \mathbb{R}$. 
	
	Assume that the set $S$ is endowed with the semi-norm $M:S\to \mathbb{R}$, having the following properties:
	\begin{itemize}
		\item[$(1)$] $M(v) \ge 0$, for all $v \in S$,
		\item[$(2)$] $M(\lambda v)=|\lambda| M(v)$, for all $v \in S$ and all $\lambda \in \mathbb{R}$.
	\end{itemize}
	
	Assume that the set $\mathscr{M}:=\{v \in S;M(v)\le 1\}$ is relatively compact in $A_1$. Consider the semi-normed set:
	$$
	Y:=\left\{u \in L^1_{\textup{loc}}(0,T;A_1); \int_{0}^{T} [M(u(t))]^{q_0} \dd t < \infty \textup{ and } \int_{0}^{T} \left\|\dfrac{\dd u}{\dd t}\right\|_{A_1}^{q_1}\dd t < \infty\right\}.
	$$
	
	Then, we have that $Y \hookrightarrow\hookrightarrow \mathcal{C}^0([0,T];A_1) \subset L^{q_0}(0,T;A_1)$ and, moreover,
	\begin{itemize}
		\item[$(a)$] if $1 \le  q_0 \le \infty$ and $1<q_1 \le \infty$, then $Y \hookrightarrow \hookrightarrow \mathcal{C}^0([0,T];A_1)$;
		\item[$(b)$] if $1 \le q_0 <\infty$ and $q_1=1$, then $Y \hookrightarrow \hookrightarrow L^{q_0}(0,T;A_1)$.
	\end{itemize}
	\qed
\end{lemma}

We are now ready to recall the well-known Dubinskii compactness lemma (cf., e.g., Theorem~1 of~\cite{Dubinski1965} and Theorem~2.1 of~\cite{BarretSuli2011}).

\begin{lemma}
	\label{Dub}
	Let $A_0$ and $A_1$ be normed linear spaces such that $A_0 \hookrightarrow A_1$. Let $S \subset A_0$ be such that $\lambda S \subset S$, for all $\lambda \in \mathbb{R}$. 
	
	Assume that the set $S$ is endowed with the semi-norm $M:S\to \mathbb{R}$, having the following properties:
	\begin{itemize}
		\item[$(1)$] $M(v) \ge 0$, for all $v \in S$,
		\item[$(2)$] $M(\lambda v)=|\lambda| M(v)$, for all $v \in S$ and all $\lambda \in \mathbb{R}$.
	\end{itemize}
	
	Assume that the set $\mathscr{M}:=\{v \in S;M(v)\le 1\}$ is relatively compact in $A_0$. Consider the semi-normed set
	$$
	Y:=\left\{u \in L^1_{\textup{loc}}(0,T;A_1); \int_{0}^{T} [M(u(t))]^{q_0} \dd t < \infty \textup{ and } \int_{0}^{T} \left\|\dfrac{\dd u}{\dd t}\right\|_{A_1}^{q_1}\dd t < \infty\right\},
	$$
	with $1 \le q_0 \le \infty$ and $1 \le q_1 \le \infty$, and the pairs $(q_0,q_1)=(1,\infty)$ and $(q_0,q_1)=(\infty,1)$ cannot be attained.
	
	Then $Y$ is relatively compact in $L^{q_0}(0,T;A_0)$.
	\qed
\end{lemma}

A discrete version of this compactness result has been established by Raviart in the article~\cite{Raviart1967}.

\begin{lemma}[Lemma~1.4 of~\cite{Raviart1970}]
	\label{Dub:dis}
	Let $A_0$ and $A_1$ be normed linear spaces such that $A_0 \hookrightarrow A_1$. Let $S \subset A_0$ be such that $\lambda S \subset S$, for all $\lambda \in \mathbb{R}$. 
	Assume that the set $S$ is endowed with the semi-norm $M:S\to \mathbb{R}$, having the following properties:
	\begin{itemize}
		\item[$(1)$] $M(v) \ge 0$, for all $v \in S$,
		\item[$(2)$] $M(\lambda v)=|\lambda| M(v)$, for all $v \in S$ and all $\lambda \in \mathbb{R}$.
	\end{itemize}
	
	Assume that the set $\mathscr{M}:=\{v \in S;M(v)\le 1\}$ is relatively compact in $A_0$. 
	
	For each $k>0$, consider the vector $\bm{v}_k:=(v_k^n)_{n=0}^N$ of elements of $S$ such that
	$$
	k\sum_{n=0}^N [M(v_k^n)]^{q_0} \le c_0,\quad\textup{ for some constants }c_0\ge 0,
	$$
	and
	$$
	k\sum_{n=0}^{N-1}\left\|\dfrac{v_k^{n+1}-v_k^n}{k}\right\|_{A_1}^{q_1} \le c_1,\quad\textup{ for some constants }c_1\ge 0,
	$$
	where $1 \le q_0 \le \infty$ and $1 \le q_1 \le \infty$, and the pairs $(q_0,q_1)=(1,\infty)$ and $(q_0,q_1)=(\infty,1)$ cannot be attained.
	
	Then, it is possible to extract a subsequence of the sequence $(\Pi_k \bm{v}_k)$ that strongly converges in $L^{q_0}(0,T;A_0)$, where
	$$
	\Pi_k \bm{u}_k :(0,T) \to A_0 \qquad \textup{ with } \qquad (\Pi_k \bm{v}_k)(t):=v_k^{n+1}, \textup{ for a.e. } nk<t\le (n+1)k.
	$$
	\qed
\end{lemma}

Let us also recall a result on vector-valued measures proved by Zinger in the article~\cite{Singer1959} and later improved by Dinculeanu (see also, e.g., page~182 of~\cite{DieUhl1977}, and page~380 of~\cite{Dinculeanu1965}).

\begin{lemma}[Dinculeanu-Zinger theorem]
	\label{ds}
	Let $\omega$ be a compact Hausdorff space and let $X$ be a Banach space satisfying the Radon-Nikodym property. Let $\mathcal{F}$ be the collection of Borel sets of $\omega$.
	
	There exists an isomorphism between $(\mathcal{C}^0(\omega;X))^\ast$ and the space of the regular Borel measures with finite variation taking values in $X^\ast$. In particular, for each $F \in (\mathcal{C}^0(\omega;X))^\ast$, there exists a unique regular Borel measure $\mu:\mathcal{F} \to X^\ast$ in $\mathcal{M}(\omega;X^\ast)$ with finite variation such that
	$$
	\langle\langle f,F\rangle\rangle_{X} =\int_\omega \langle \dd \mu,f\rangle_{X^\ast,X},
	$$
	for all $f \in \mathcal{C}^0(\omega;X)$.
	\qed
\end{lemma}

We now state the penalized problem, which is suggested by the formal model~\eqref{BVPu}. Note that the initial condition makes sense thanks to Lemma~\ref{lem:3}.

\begin{customprob}{$\mathcal{P}_\ell$}
	\label{Pkappa}
	Find a function $u_\ell$ that satisfies
	\begin{align*}
		u_\ell &\in L^\infty(0,T;W_0^{1,p}(\Omega)),\\
		\dfrac{\dd}{\dd t}\left(|u_\ell|^{\frac{\alpha-2}{2}}u_\ell\right) &\in L^2(0,T;L^2(\Omega)),\\
		\dfrac{\dd}{\dd t}\left(|u_\ell|^{\alpha-2}u_\ell\right) &\in L^\infty(0,T;W^{-1,p'}(\Omega)),
	\end{align*}
	and satisfying the following variational equations:
	\begin{equation}
		\label{penalty}
		\int_{\Omega} \dfrac{\dd}{\dd t}(|u_\ell|^{\alpha-2} u) v \dd x
		+\int_{\Omega} \mu(x) |\nabla u_\ell|^{p-2}\nabla u_\ell \cdot \nabla v \dd x -\dfrac{1}{\ell}\int_{\Omega} \{u_\ell\}^{-} v\dd x
		= \int_{\Omega} \tilde{a}(t,x) v \dd x,
	\end{equation}
	in the sense of distributions on $(0,T)$ for all $v \in W_0^{1,p}(\Omega)$,	as well as the following initial condition
	$$
	u_\ell(0)=u_0,
	$$
	for some prescribed nonzero $u_0 \in K$.
	\bqed
\end{customprob}

\section{Existence of weak solutions and variational formulation for Problem~\eqref{BVPu}}
\label{Sec:3}

The recovery of the variational formulation for Problem~\eqref{BVPu} and the existence of solutions for this problem are broken into three parts.
We first establish the existence of solutions for Problem~\ref{Pkappa} by means of a semi-discrete scheme. For each $0 \le n \le N-1$, consider the semi-discrete problem:
\begin{customprob}{$\mathcal{P}_\ell^{n+1}$}
	\label{Pkappa:seq}
	Given $u_{\ell,k}^n \in W_0^{1,p}(\Omega)$, find a function $u_{\ell,k}^{n+1} \in W_0^{1,p}(\Omega)$ that satisfies the following variational equations:
	\begin{equation}
		\label{penalty:seq}
		\begin{aligned}
			&\dfrac{1}{k}\{|u_{\ell,k}^{n+1}|^{\alpha-2} u_{\ell,k}^{n+1} - |u_{\ell,k}^{n}|^{\alpha-2}u_{\ell,k}^{n}\}-\nabla \cdot \left(\mu |\nabla u_{\ell,k}^{n+1}|^{p-2} \nabla u_{\ell,k}^{n+1}\right)
			-\dfrac{\{u_{\ell,k}^{n+1}\}^{-}}{\ell}\\
			&=\dfrac{1}{k} \int_{nk}^{(n+1)k} \tilde{a}(t) \dd t \textup{ in }W^{-1,p'}(\Omega),
		\end{aligned}
	\end{equation}
	where $u_{\ell,k}^{0}:=u_0 \in K$, and $u_0$ is the prescribed element appearing in Problem~\ref{Pkappa}.
	\bqed
\end{customprob}

The following existence-and-uniqueness result can be established.

\begin{theorem}
	\label{thm:1}
	Let $T>0$, $\Omega \subset \mathbb{R}^2$ and $p$ be as in Section~\ref{Sec:2} and let $\alpha$ be as in~\eqref{alpha}. Let $\ell>0$ be given, let $N \ge 1$ be an integer, and define $k:=T/N$.
	Assume that~($H$\ref{H1})--($H$\ref{H4}) hold.
	
	For each $0 \le n \le N-1$, Problem~\ref{Pkappa:seq} admits a unique solution $u_{\ell,k}^{n+1} \in W_0^{1,p}(\Omega)$.
\end{theorem}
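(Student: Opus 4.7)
The plan is to recast Problem~\ref{Pkappa:seq} as an abstract operator equation $A(u)=F$ in $W^{-1,p'}(\Omega)$ and then apply the Browder--Minty theorem for monotone operators on the reflexive Banach space $W_0^{1,p}(\Omega)$. I would define $A:W_0^{1,p}(\Omega)\to W^{-1,p'}(\Omega)$ by
$$
\langle A(u), v\rangle := \frac{1}{\ell}\int_\Omega |u|^{\alpha-2} u \, v \dd x + \int_\Omega \mu(x) |\nabla u|^{p-2}\nabla u \cdot \nabla v \dd x - \frac{1}{\kappa}\int_\Omega \{u\}^{-} v \dd x,
$$
and let $F\in W^{-1,p'}(\Omega)$ encode the data available at time step $n$, namely the term involving $|u_{\kappa,\ell}^n|^{\alpha-2}u_{\kappa,\ell}^n$ and the time average of $\tilde a$ over $(n\ell,(n+1)\ell)$. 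Both $A$ and $F$ are well defined thanks to Morrey's embedding $W_0^{1,p}(\Omega)\hookrightarrow\mathcal{C}^0(\overline{\Omega})$, available because $p\ge 2.8>2$ and $\Omega\subset\mathbb{R}^2$, together with assumption $(H4)$ and the fact that $u_{\kappa,\ell}^n\in W_0^{1,p}(\Omega)$ is given.

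Next I would verify the three standard hypotheses of the Browder--Minty theorem. \emph{Monotonicity} of the elliptic contribution and of the $|u|^{\alpha-2}u$ contribution reduces to the pointwise inequalities of Lemma~\ref{lem:7} applied with $r=p$ and $r=\alpha$ respectively, while monotonicity of the penalty contribution is precisely the content of Lemma~\ref{lem:1}, where $-\{\cdot\}^-$ was even shown to be $1$-Lipschitz on $L^2(\Omega)$. \emph{Hemicontinuity} of $A$ follows by standard Nemytskii--type arguments combined with the continuous embeddings $W_0^{1,p}(\Omega)\hookrightarrow L^q(\Omega)$ for every $q\in[1,\infty]$. The hard part will be \emph{coercivity}: pairing $A(u)$ with $u$ yields a nonnegative contribution from the $|u|^{\alpha-2}u$ term, a lower bound $\mu_1\|\nabla u\|_{L^p(\Omega)}^p$ from the elliptic term by $(H2)$, but also the a priori unfavourable penalty contribution
$$
-\frac{1}{\kappa}\int_\Omega \{u\}^{-} u \dd x = -\frac{1}{\kappa}\int_{\{u\le 0\}} u^2 \dd x \le 0.
$$

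The key observation that rescues coercivity is that $p>2$, so that $W_0^{1,p}(\Omega)\hookrightarrow L^2(\Omega)$ continuously and, by Young's inequality with conjugate exponents $p/2$ and $p/(p-2)$, the unfavourable term $\kappa^{-1}\|u\|_{L^2(\Omega)}^2$ can be absorbed into $(\mu_1/2)\|\nabla u\|_{L^p(\Omega)}^p$ plus a constant $C=C(\mu_1,\kappa,p,\Omega)$. Combined with Poincar\'e's inequality, this yields $\langle A(u),u\rangle\ge C_1 \|u\|_{W_0^{1,p}(\Omega)}^p - C_2$ with $C_1>0$ and $p>1$, which is the required coercivity. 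The Browder--Minty theorem then produces the sought $u_{\kappa,\ell}^{n+1}\in W_0^{1,p}(\Omega)$. Uniqueness would finally follow from the strict monotonicity of the $p$-Laplacian part of $A$: if $u_1$ and $u_2$ were two solutions of Problem~\ref{Pkappa:seq}, subtracting the equations and testing against $u_1-u_2$ would force all three nonnegative contributions to vanish, and the pointwise strict monotonicity of $\xi\mapsto|\xi|^{p-2}\xi$ on $\mathbb{R}^2$ together with the Dirichlet boundary condition would then yield $u_1=u_2$.
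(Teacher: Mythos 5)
Your overall strategy is the same as the paper's: recast the time step as an operator equation on $W_0^{1,p}(\Omega)$ and invoke the Browder--Minty theorem after checking hemicontinuity, monotonicity and coercivity; the paper's proof defines the analogous operator $A_\kappa$ and proceeds in exactly this way. Where you deviate is in the coercivity step, and there you have a sign slip. You compute $-\kappa^{-1}\int_\Omega \{u\}^{-}\,u\,\dd x = -\kappa^{-1}\int_{\{u\le 0\}} u^2\,\dd x \le 0$, which amounts to reading $\{u\}^{-}$ as $\min\{u,0\}$. In the paper's (admittedly awkwardly worded) convention $\{u\}^{-}$ is the usual \emph{nonnegative} negative part $u^{-}=\max\{-u,0\}$; this is what makes Lemma~\ref{lem:1}'s monotonicity claim true, and it is confirmed both by the $+\kappa^{-1}\|\{u_{\kappa,\ell}^{n+1}\}^{-}\|_{L^2(\Omega)}^2$ term on the left of the first estimate in the proof of Theorem~\ref{thm:2} and by the explicit remark $\{u_\kappa(t)\}^{-}\ge 0$ later in Section~\ref{Sec:3}. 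With that convention the penalty pairing is $-\kappa^{-1}\int_\Omega\{u\}^{-}u\,\dd x = +\kappa^{-1}\|\{u\}^{-}\|_{L^2(\Omega)}^2 \ge 0$: it \emph{helps} coercivity, no Young/Sobolev absorption is needed, and the paper's one-line coercivity estimate is correct as written. Note also that your reading is internally inconsistent with your own appeal to Lemma~\ref{lem:1}: a monotone operator that vanishes at the origin cannot pair negatively with its argument, so monotonicity of $v\mapsto -\{v\}^{-}$ already forces $-\int_\Omega\{u\}^{-}u\,\dd x\ge 0$. Your absorption step would salvage coercivity under either reading, and the uniqueness argument via strict monotonicity is fine, so the proposal reaches the correct conclusion — but once the sign is fixed, the detour disappears and the argument collapses to the paper's.
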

\begin{proof}
	For the sake of brevity, define
	
	\begin{equation}
		\label{atilde}
		\tilde{a}_k^n := \dfrac{1}{k} \int_{nk}^{(n+1)k} \tilde{a}(t) \dd t.
	\end{equation}
	
	Consider the operator $A_\ell :W_0^{1,p}(\Omega) \to W^{-1,p'}(\Omega)$ defined by
	\begin{equation}
		\label{Akappa}
		A_\ell(v):=|v|^{\alpha-2} v -\nabla \cdot \left(\mu |\nabla v|^{p-2} \nabla v\right)-\dfrac{\{v\}^{-}}{\ell},\quad\textup{ for all }v\in W_0^{1,p}(\Omega).
	\end{equation}
	
	Consider the operator $B_\ell :W_0^{1,p}(\Omega) \to W^{-1,p'}(\Omega)$ defined by
	\begin{equation}
		\label{Bkappa}
		B_\ell(v):= -\nabla \cdot \left(\mu |\nabla v|^{p-2} \nabla v\right)-\dfrac{\{v\}^{-}}{\ell},\quad\textup{ for all }v\in W_0^{1,p}(\Omega).
	\end{equation}
	
	The operators $A_\ell$ and $B_\ell$ are hemi-continuous, as each of their terms is hemi-continuous.
	By Lemma~\ref{lem:1} and Lemma~\ref{lem:7}, the operators $A_\ell$ and $B_\ell$ are strictly monotone.
	Finally, an application of the Poincar\'e-Friedrichs inequality and the monotonicity of the negative part operator (Lemma~\ref{lem:1}) give:
	$$
	\dfrac{\langle A_\ell v, v \rangle_{W^{-1,p'}(\Omega), W_0^{1,p}(\Omega)}}{\|v\|_{W_0^{1,p}(\Omega)}}
	\ge \dfrac{\|v\|_{L^\alpha(\Omega)}^\alpha + \mu_1 \|\nabla v\|_{L^p(\Omega)}^p}{\|v\|_{W_0^{1,p}(\Omega)}} \ge \mu_1 c_0 \|v\|_{W_0^{1,p}(\Omega)}^{p-1},
	$$
	and the term on the right diverges as $\|v\|_{W_0^{1,p}(\Omega)} \to \infty$. This means that the operator $A_\ell$ is coercive. With the same reasoning, it can be proved that the operator $B_\ell$ is coercive too.
	An application of the Minty-Browder theorem (cf., e.g., Theorem~9.14-1 of~\cite{PGCLNFAA}) 
	ensures that the numerical scheme in~\eqref{penalty:seq} admits a unique 
	solution $u_{\ell,k}^{n+1} \in W_0^{1,p}(\Omega)$.
	The proof is complete.
\end{proof}

Define the mapping $\Pi_k \bm{u}_\ell:(0,T) \to W_0^{1,p}(\Omega)$ by
\begin{equation}
	\label{Pil}
	\Pi_k \bm{u}_\ell(t):=u_{\ell,k}^{n+1}, \quad\textup{ if } n k < t \le (n+1) k.
\end{equation}

Next, we discuss the convergence of the sequence $\{\Pi_k \bm{u}_\ell\}_{k>0}$ as $k \to 0^+$ or, equivalently, as $N \to\infty$. To this aim, we need to establish some \emph{a priori} estimates.

\begin{theorem}
	\label{thm:2}
	Let $T>0$, $\Omega \subset \mathbb{R}^2$ and $p$ be as in Section~\ref{Sec:2} and let $\alpha$ be as in~\eqref{alpha}. Let $\ell>0$ be given and intended to go to zero, let $N \ge 1$ be an integer, and define $k:=T/N$.
	Assume that~($H$\ref{H1})--($H$\ref{H4}) hold.
	
	Then, the following \emph{a priori} estimates hold:
	\begin{equation}
		\label{est:1}
		\max_{0\le n\le N} \|u_{\ell,k}^{n}\|_{L^\alpha(\Omega)} \le C,
	\end{equation}
	
	\begin{equation}
		\label{est:2}
		k \sum_{n=0}^N \|u_{\ell,k}^{n}\|_{W_0^{1,p}(\Omega)}^p \le C,
	\end{equation}
	
	\begin{equation}
		\label{est:3}
		k \sum_{n=0}^{N-1} \left\|\dfrac{|u_{\ell,k}^{n+1}|^{\frac{\alpha-2}{2}}u_{\ell,k}^{n+1} -|u_{\ell,k}^{n}|^{\frac{\alpha-2}{2}}u_{\ell,k}^{n}}{k}\right\|_{L^2(\Omega)}^2 \le C,
	\end{equation}
	
	\begin{equation}
		\label{est:3:1}
		\max_{0\le n\le N} \left\||u_{\ell,k}^{n}|^\frac{\alpha-2}{2} u_{\ell,k}^{n}\right\|_{L^2(\Omega)} \le C,
	\end{equation}
	
	\begin{equation}
		\label{est:4}
		\max_{0\le n\le N} \|u_{\ell,k}^{n}\|_{W_0^{1,p}(\Omega)} \le C,
	\end{equation}
	
	\begin{equation}
		\label{est:5}
		\max_{0\le n\le N-1} \left\|\dfrac{|u_{\ell,k}^{n+1}|^{\alpha-2}u_{\ell,k}^{n+1} - |u_{\ell,k}^{n}|^{\alpha-2}u_{\ell,k}^{n}}{k}\right\|_{W^{-1,p'}(\Omega)} \le C\left(1+\dfrac{1}{\ell}\right),
	\end{equation}
	
	\begin{equation}
		\label{est:5:1}
		\max_{0\le n\le N} \left\||u_{\ell,k}^{n}|^{\alpha-2} u_{\ell,k}^{n}\right\|_{L^{\alpha'}(\Omega)} \le C,
	\end{equation}
	
	for some $C>0$ independent of $\ell$ and $N$.
\end{theorem}
\begin{proof}
	Multiplying~\eqref{penalty:seq} by $u_{\ell,k}^{n+1}$ in the sense of the duality between $W^{-1,p'}(\Omega)$ and $W_0^{1,p}(\Omega)$, we obtain
	\begin{equation*}
		\begin{aligned}
			&\dfrac{1}{k} \langle |u_{\ell,k}^{n+1}|^{\alpha-2} u_{\ell,k}^{n+1} -|u_{\ell,k}^{n}|^{\alpha-2} u_{\ell,k}^{n}, u_{\ell,k}^{n+1} \rangle_{W^{-1,p'}(\Omega), W_0^{1,p}(\Omega)}
			+\int_{\Omega} \mu(x) |\nabla u_{\ell,k}^{n+1}|^{p-2} \nabla u_{\ell,k}^{n+1} \cdot \nabla u_{\ell,k}^{n+1}\dd x\\
			&\quad-\dfrac{1}{\ell} \int_{\Omega} \{u_{\ell,k}^{n+1}\}^{-} u_{\ell,k}^{n+1} \dd x =\dfrac{1}{k}\int_{\Omega} \left(\int_{nk}^{(n+1)k} \tilde{a}(t) \dd t\right) u_{\ell,k}^{n+1} \dd x.
		\end{aligned}
	\end{equation*}
	
	Using the first estimate in Lemma~\ref{lem:7} with $r=\alpha$, $\xi=u_{\ell,k}^{n+1}$ and $\eta=u_{\ell,k}^{n}$, the Poincar\'e-Friedrichs inequality, Bochner's theorem (Theorem~8.9 of~\cite{Leoni2017}), H\"older's inequality and Young's inequality (cf., e.g., \cite{Young1912}) we obtain:
	\begin{equation*}
		\begin{aligned}
			&\dfrac{1}{\alpha' k}(\|u_{\ell,k}^{n+1}\|_{L^\alpha(\Omega)}^\alpha - \|u_{\ell,k}^{n}\|_{L^\alpha(\Omega)}^\alpha) +c_0 \mu_1 \|u_{\ell,k}^{n+1}\|_{W_0^{1,p}(\Omega)}^p
			+\dfrac{1}{\ell}\|\{u_{\ell,k}^{n+1}\}^{-}\|_{L^2(\Omega)}^2\\
			&\le \dfrac{1}{k \varepsilon p'} \left(\int_{nk}^{(n+1)k} \|\tilde{a}(t)\|_{W^{-1,p'}(\Omega)}^{p'} \dd t\right) +\dfrac{\varepsilon}{p}\|u_{\ell,k}^{n+1}\|_{W_0^{1,p}(\Omega)}^p,
		\end{aligned}
	\end{equation*}
	where $\varepsilon$ is any positive number.
	
	Multiplying by $(k \alpha')$ both sides and summing over $0 \le n \le s-1$, where $1 \le s \le N$, we find that
	\begin{equation*}
		\begin{aligned}
			&\sum_{n=0}^{s-1} \left\{\|u_{\ell,k}^{n+1}\|_{L^\alpha(\Omega)}^\alpha+\left(c_0 \mu_1-\dfrac{\varepsilon}{p}\right)k\alpha' \|u_{\ell,k}^{n+1}\|_{W_0^{1,p}(\Omega)}^p+\dfrac{k \alpha'}{\ell}\|\{u_{\ell,k}^{n+1}\}^{-}\|_{L^2(\Omega)}^2\right\}\\
			&\le \dfrac{\alpha'}{\varepsilon p'} \sum_{n=0}^{s-1} \left(\int_{nk}^{(n+1)k} \|\tilde{a}(t)\|_{W^{-1,p'}(\Omega)}^{p'} \dd t\right) +\sum_{n=0}^{s-1} \|u_{\ell,k}^{n}\|_{L^\alpha(\Omega)}^\alpha,
		\end{aligned}
	\end{equation*}
	and we finally obtain:
	\begin{equation*}
		\begin{aligned}
			&\|u_{\ell,k}^{s}\|_{L^\alpha(\Omega)}^\alpha +\left(c_0 \mu_1-\dfrac{\varepsilon}{p}\right)k\alpha' \sum_{n=0}^{s-1} \|u_{\ell,k}^{n+1}\|_{W_0^{1,p}(\Omega)}^p
			+\dfrac{k \alpha'}{\ell} \sum_{n=0}^{s-1} \|\{u_{\ell,k}^{n+1}\}^{-}\|_{L^2(\Omega)}^2\\
			&\le \dfrac{\alpha'}{\varepsilon p'} \sum_{n=0}^{s-1} \left(\int_{nk}^{(n+1)k} \|\tilde{a}(t)\|_{W^{-1,p'}(\Omega)}^{p'} \dd t\right)+\|u_0\|_{L^\alpha(\Omega)}^\alpha.
		\end{aligned}
	\end{equation*}
	
	Therefore, there exists $C=C(u_0,\alpha,\varepsilon,p,\Omega)>0$ such that:
	\begin{equation}
		\label{est:penalty}
		\begin{aligned}
			\max_{0\le n\le N} \|u_{\ell,k}^{n}\|_{L^\alpha(\Omega)} &\le C,\\
			k \sum_{n=0}^N \|u_{\ell,k}^{n}\|_{W_0^{1,p}(\Omega)}^p &\le C,\\
			\dfrac{k}{\ell} \sum_{n=1}^{N}\|\{u_{\ell,k}^{n}\}^{-}\|_{L^2(\Omega)}^2 &\le C,
		\end{aligned}
	\end{equation}
	so that the estimates~\eqref{est:1} and~\eqref{est:2} are proved. 
	
	By~\eqref{est:1}, for each $0 \le n \le N$, we have that
	\begin{equation*}
		\|u_{\ell,k}^{n}\|_{L^\alpha(\Omega)}^\alpha =\int_{\Omega} \left||u_{\ell,k}^{n}|^\frac{\alpha-2}{2} u_{\ell,k}^{n}\right|^2 \dd x =
		\left\||u_{\ell,k}^{n}|^\frac{\alpha-2}{2} u_{\ell,k}^{n}\right\|_{L^2(\Omega)}^2.
	\end{equation*}
	
	Since the left-hand side of the previous equation is bounded independently of $N$ and $\ell$, we immediately infer that
	$$
	\max_{0\le n\le N} \left\||u_{\ell,k}^{n}|^\frac{\alpha-2}{2} u_{\ell,k}^{n}\right\|_{L^2(\Omega)} \le C,
	$$
	for some $C>0$ independent of $N$ and $\ell$, thus establishing the estimate~\eqref{est:3:1}.
	
	In order to recover the estimates~\eqref{est:3} and~\eqref{est:4}, let us multiply~\eqref{penalty:seq} by $(u_{\ell,k}^{n+1}-u_{\ell,k}^{n})$ in the sense of the duality between $W^{-1,p'}(\Omega)$ and $W_0^{1,p}(\Omega)$. We obtain
	\begin{equation}
		\label{prelim:1}
		\begin{aligned}
			&\dfrac{1}{k} \langle |u_{\ell,k}^{n+1}|^{\alpha-2} u_{\ell,k}^{n+1} -|u_{\ell,k}^{n}|^{\alpha-2} u_{\ell,k}^{n}, u_{\ell,k}^{n+1} - u_{\ell,k}^{n}\rangle_{W^{-1,p'}(\Omega), W_0^{1,p}(\Omega)}\\
			&\quad+\int_{\Omega} \mu(x) |\nabla u_{\ell,k}^{n+1}|^{p-2} \nabla u_{\ell,k}^{n+1} \cdot \nabla (u_{\ell,k}^{n+1}-u_{\ell,k}^{n})\dd x\\
			&\quad-\dfrac{1}{\ell} \int_{\Omega} \{u_{\ell,k}^{n+1}\}^{-} (u_{\ell,k}^{n+1} - u_{\ell,k}^{n})\dd x =\dfrac{1}{k}\int_{\Omega} \left(\int_{nk}^{(n+1)k} \tilde{a}(t) \dd t\right) (u_{\ell,k}^{n+1}-u_{\ell,k}^{n}) \dd x.
		\end{aligned}
	\end{equation}
	
	For each $1 \le s \le N$, the following identity holds:
	\begin{equation}
		\label{prelim:2}
		\begin{aligned}
			&\sum_{n=0}^{s-1} \int_{\Omega} \tilde{a}_k^n (u_{\ell,k}^{n+1} - u_{\ell,k}^{n}) \dd x
			=-\sum_{n=0}^{s-2}\int_{\Omega} (\tilde{a}_k^{n+1}-\tilde{a}_k^n) u_{\ell,k}^{n+1} \dd x
			+\int_{\Omega}\tilde{a}_k^{s-1} u_{\ell,k}^{s} \dd x -\int_{\Omega} \tilde{a}_k^0 u_0 \dd x\\
			&=\sum_{n=0}^{s-2} \int_{nk}^{(n+1)k} \int_{\Omega} \left(\dfrac{\tilde{a}(t+k)-\tilde{a}(t)}{k}\right) u_{\ell,k}^{n+1} \dd x \dd t +\int_{\Omega}\tilde{a}_k^{s-1} u_{\ell,k}^{s} \dd x -\int_{\Omega} \tilde{a}_k^0 u_0 \dd x.
		\end{aligned}
	\end{equation}
	
	An application of Lebesgue's inequality, the triangle inequality and Young's inequality~\cite{Young1912} gives
	\begin{equation}
		\label{prelim:3}
		\begin{aligned}
			&\left|\sum_{n=0}^{s-2}\int_{\Omega} (\tilde{a}_k^{n+1}-\tilde{a}_k^n) u_{\ell,k}^{n+1} \dd x\right|
			=\left|\sum_{n=0}^{s-2} \int_{nk}^{(n+1)k} \int_{\Omega} \left(\dfrac{\tilde{a}(t+k)-\tilde{a}(t)}{k}\right)u_{\ell,k}^{n+1} \dd x \dd t\right|\\
			&\le \sum_{n=0}^{s-2} \int_{nk}^{(n+1)k} \left|\int_{\Omega} \dfrac{\tilde{a}(t+k)-\tilde{a}(t)}{k} u_{\ell,k}^{n+1}\dd x\right| \dd t\\
			&\le \sum_{n=0}^{s-2}\int_{nk}^{(n+1)k} \left\|\dfrac{\tilde{a}(t+k)-\tilde{a}(t)}{k}\right\|_{W^{-1,p'}(\Omega)} \|u_{\ell,k}^{n+1}\|_{W_0^{1,p}(\Omega)}\dd t\\
			& \le \sum_{n=0}^{s-2}\left[\dfrac{1}{p' \varepsilon}\int_{nk}^{(n+1)k} \left\|\dfrac{\tilde{a}(t+k)-\tilde{a}(t)}{k}\right\|_{W^{-1,p'}(\Omega)}^{p'}\dd t+\dfrac{\varepsilon k}{p}\|u_{\ell,k}^{n+1}\|_{W_0^{1,p}(\Omega)}^p\right].
		\end{aligned}
	\end{equation}
	
	Since $\tilde{a} \in W^{1,p}(0,T;\mathcal{C}^0(\overline{\Omega}))$ (see assumption~($H$\ref{H4})), an application of the finite difference quotients theory (cf., e.g., Chapter~5 in~\cite{Evans2010}), we have that the latter term can be estimated as follows:
	\begin{equation}
		\label{prelim:4}
		\begin{aligned}
			&\sum_{n=0}^{s-2}\left[\dfrac{1}{p' \varepsilon}\int_{nk}^{(n+1)k} \left\|\dfrac{\tilde{a}(t+k)-\tilde{a}(t)}{k}\right\|_{W^{-1,p'}(\Omega)}^{p'}\dd t+\dfrac{\varepsilon k}{p}\|u_{\ell,k}^{n+1}\|_{W_0^{1,p}(\Omega)}^p\right]\\
			& \le \dfrac{C}{p' \varepsilon} \sum_{n=0}^{s-2} \int_{nk}^{(n+1)k} \left\|\dfrac{\dd \tilde{a}}{\dd t}(t)\right\|_{W^{-1,p'}(\Omega)}^{p'} \dd t +\dfrac{\varepsilon k}{p} \sum_{n=0}^{s-2} \|u_{\ell,k}^{n+1}\|_{W_0^{1,p}(\Omega)}^p\\
			&=\dfrac{C}{p' \varepsilon} \int_{0}^{(s-1)k} \left\|\dfrac{\dd \tilde{a}}{\dd t}(t)\right\|_{W^{-1,p'}(\Omega)}^{p'} \dd t + \dfrac{\varepsilon k}{p} \sum_{n=0}^{s-2} \|u_{\ell,k}^{n+1}\|_{W_0^{1,p}(\Omega)}^p.
		\end{aligned}
	\end{equation}
	
	Combining~\eqref{prelim:2}--\eqref{prelim:4} thus gives
	\begin{equation}
		\label{prelim:5}
		\begin{aligned}
			&\left|\sum_{n=0}^{s-1} \int_{\Omega} \tilde{a}_k^n (u_{\ell,k}^{n+1} - u_{\ell,k}^{n}) \dd x\right|
			\le \left|\sum_{n=0}^{s-2} \int_{nk}^{(n+1)k} \int_{\Omega} \left(\dfrac{\tilde{a}(t+k)-\tilde{a}(t)}{k}\right) u_{\ell,k}^{n+1} \dd x \dd t\right|\\
			&\quad+\left|\int_{\Omega}\tilde{a}_k^{s-1} u_{\ell,k}^{s} \dd x\right| +\left|\int_{\Omega} \tilde{a}_k^0 u_0 \dd x\right|\\
			& \le \dfrac{C}{p' \varepsilon} \int_{0}^{(s-1)k} \left\|\dfrac{\dd \tilde{a}}{\dd t}(t)\right\|_{W^{-1,p'}(\Omega)}^{p'} \dd t +\dfrac{\varepsilon k}{p} \sum_{n=0}^{s-2} \|u_{\ell,k}^{n+1}\|_{W_0^{1,p}(\Omega)}^p\\
			&\quad +\dfrac{1}{p' \varepsilon} \|\tilde{a}\|_{L^\infty(0,T;W^{-1,p'}(\Omega))}^{p'} +\dfrac{\varepsilon}{p}\|u_{\ell,k}^{s}\|_{W_0^{1,p}(\Omega)}^p\\
			&\quad +\dfrac{1}{p' \varepsilon} \|\tilde{a}\|_{L^\infty(0,T;W^{-1,p'}(\Omega))}^{p'} +\dfrac{\varepsilon}{p}\|u_{0}\|_{W_0^{1,p}(\Omega)}^p\\
			& \le \left[\dfrac{C}{p' \varepsilon}\int_{0}^{sk} \left\|\dfrac{\dd \tilde{a}}{\dd t}(t)\right\|_{W^{-1,p'}(\Omega)}^{p'} \dd t +\dfrac{2}{p' \varepsilon}\|\tilde{a}\|_{L^\infty(0,T;W^{-1,p'}(\Omega))}^{p'}\right]\\
			&\quad\cdot \left[\underbrace{\dfrac{\varepsilon k}{p}\left(\sum_{n=0}^{s-2} \|u_{\ell,k}^{n}\|_{W_0^{1,p}(\Omega)}^p\right)}_{\textup{bounded by~\eqref{est:2}}}+\dfrac{\varepsilon}{p}\|u_{\ell,k}^{s}\|_{W_0^{1,p}(\Omega)}^p+\dfrac{\varepsilon}{p}\|u_0\|_{W_0^{1,p}(\Omega)}^p\right]\\
			&\le C(1+\varepsilon\|u_{\ell,k}^{s}\|_{W_0^{1,p}(\Omega)}^p).
		\end{aligned}
	\end{equation}

For any $0\le n \le N-1$, we have that an application of the Young's inequality~\cite{Young1912} gives:
\begin{equation*}
	\begin{aligned}
		&\dfrac{1}{\ell}\int_{\Omega} \{u_{\ell,\kappa}^{n+1}\}^{-}(u_{\ell,\kappa}^{n+1}-u_{\ell,\kappa}^{n}) \dd x
		=\dfrac{1}{\ell}\int_{\Omega} \left(-\left|\{u_{\ell,\kappa}^{n+1}\}^{-}\right|^2-\{u_{\ell,\kappa}^{n+1}\}^{-}\left(\{u_{\ell,\kappa}^{n}\}^{+}-\{u_{\ell,\kappa}^{n}\}^{-}\right)\right) \dd x\\
		&\le -\dfrac{1}{\ell}\int_{\Omega} \left|\{u_{\ell,\kappa}^{n+1}\}^{-}\right|^2 \dd x
		+\dfrac{1}{\ell}\int_{\Omega} \{u_{\ell,\kappa}^{n+1}\}^{-} \{u_{\ell,\kappa}^{n}\}^{-} \dd x\\
		& \le -\dfrac{1}{\ell}\int_{\Omega} \left|\{u_{\ell,\kappa}^{n+1}\}^{-}\right|^2 \dd x
		+\dfrac{1}{2\ell}\int_{\Omega} \left|\{u_{\ell,\kappa}^{n+1}\}^{-}\right|^2+\left|\{u_{\ell,\kappa}^{n}\}^{-}\right|^2 \dd x\\
		&= -\dfrac{1}{2\ell}\int_{\Omega} \left|\{u_{\ell,\kappa}^{n+1}\}^{-}\right|^2 \dd x
		+\dfrac{1}{2\ell}\int_{\Omega} \left|\{u_{\ell,\kappa}^{n}\}^{-}\right|^2 \dd x.
	\end{aligned}
\end{equation*}

For any given $1 \le s \le N$, we thus have that the previous estimate gives:
\begin{equation*}
	\begin{aligned}
		&\dfrac{1}{\ell}\sum_{n=0}^{s-1}\int_{\Omega} \{u_{\ell,\kappa}^{n+1}\}^{-}(u_{\ell,\kappa}^{n+1}-u_{\ell,\kappa}^{n}) \dd x
		\le -\dfrac{1}{2\ell} \sum_{n=0}^{s-1} \int_{\Omega} \left|\{u_{\ell,\kappa}^{n+1}\}^{-}\right|^2 - \left|\{u_{\ell,\kappa}^{n}\}^{-}\right|^2 \dd x\\
		&=-\dfrac{1}{2\ell}\int_{\Omega} \left|\{u_{\ell,\kappa}^{s}\}^{-}\right|^2 - \left|\{u_{\ell,\kappa}^{0}\}^{-}\right|^2 \dd x
		=-\dfrac{1}{2\ell}\int_{\Omega} \left|\{u_{\ell,\kappa}^{s}\}^{-}\right|^2 - \left|\{u_0\}^{-}\right|^2 \dd x\\
		&=-\dfrac{1}{2\ell}\int_{\Omega} \left|\{u_{\ell,\kappa}^{s}\}^{-}\right|^2 \dd x \le 0,
	\end{aligned}
\end{equation*}
where the last equality directly follows from the fact that $u_0 \in K$.

To summarise, we have shown that
\begin{equation}
\label{SC1}
\dfrac{1}{\ell}\sum_{n=0}^{s-1}\int_{\Omega} \{u_{\ell,\kappa}^{n+1}\}^{-}(u_{\ell,\kappa}^{n+1}-u_{\ell,\kappa}^{n}) \dd x \le 0.
\end{equation}
	
	Summing~\eqref{prelim:1} over $0 \le n \le s-1$, with $1\le s \le N$, applying Lemma~\ref{lem:7}, exploiting~\eqref{prelim:5},~\eqref{SC1}, Young's inequality~\cite{Young1912} and the Poincar\'e-Friedrichs inequality gives
	\begin{equation*}
		\begin{aligned}
			&k \sum_{n=0}^{s-1} \left\|\dfrac{|u_{\ell,k}^{n+1}|^\frac{\alpha-2}{2}u_{\ell,k}^{n+1}-|u_{\ell,k}^{n}|^\frac{\alpha-2}{2}u_{\ell,k}^{n}}{k}\right\|_{L^2(\Omega)}^2
			+\dfrac{\mu_1 c_0}{p} \|u_{\ell,k}^{s}\|_{W_0^{1,p}(\Omega)}^p\\
			&\le k \sum_{n=0}^{s-1} \left\|\dfrac{|u_{\ell,k}^{n+1}|^\frac{\alpha-2}{2}u_{\ell,k}^{n+1}-|u_{\ell,k}^{n}|^\frac{\alpha-2}{2}u_{\ell,k}^{n}}{k}\right\|_{L^2(\Omega)}^2
			+\dfrac{\mu_1}{p} \sum_{n=0}^{s-1} \left\{\|\nabla u_{\ell,k}^{n+1}\|_{L^p(\Omega)}^p - \|\nabla u_{\ell,k}^{n}\|_{L^p(\Omega)}^p\right\}\\
			&\le C(1+\varepsilon\|u_{\ell,k}^{s}\|_{W_0^{1,p}(\Omega)}^p),
		\end{aligned}
	\end{equation*}
	so that, in the end, we obtain the following estimate
	\begin{equation*}
		k \sum_{n=0}^{s-1} \left\|\dfrac{|u_{\ell,k}^{n+1}|^\frac{\alpha-2}{2}u_{\ell,k}^{n+1}-|u_{\ell,k}^{n}|^\frac{\alpha-2}{2}u_{\ell,k}^{n}}{k}\right\|_{L^2(\Omega)}^2
		+\left(\dfrac{\mu_1 c_0}{p} - C \varepsilon\right) \|u_{\ell,k}^{s}\|_{W_0^{1,p}(\Omega)}^p \le C,
	\end{equation*}
	and the estimates~\eqref{est:3} and ~\eqref{est:4} straightforwardly follow.
	
	In order to establish the estimate~\eqref{est:5}, we exploit the boundedness of the $p$-Laplace operator (cf., e.g., Chapter~9 in~\cite{{PGCLNFAA}}), the fact that $\mu(x) \le \mu_2$, and~\eqref{est:4} so as to be in a position to evaluate
	\begin{equation*}
		\label{prelim:6}
		\begin{aligned}
			&\left\|\dfrac{|u_{\ell,k}^{n+1}|^{\alpha-2}u_{\ell,k}^{n+1}-|u_{\ell,k}^{n}|^{\alpha-2}u_{\ell,k}^{n}}{k}\right\|_{W^{-1,p'}(\Omega)}\\
			&\le \left\|\nabla \cdot \left(\mu |\nabla u_{\ell,k}^{n+1}|^{p-2} \nabla u_{\ell,k}^{n+1}\right)\right\|_{W^{-1,p'}(\Omega)}
			+\dfrac{1}{\ell}\|\{u_{\ell,k}^{n+1}\}^{-}\|_{W^{-1,p'}(\Omega)}+\|\tilde{a}\|_{W^{1,p}(0,T;\mathcal{C}^0(\overline{\Omega}))}\\
			&\le C\left(1+\dfrac{1}{\ell}\right), \quad\textup{ for some } C>0 \textup{ independent of } N,
		\end{aligned}
	\end{equation*}
	for all $0 \le n \le N-1$, thus establishing the estimate~\eqref{est:5}.
	
	By~\eqref{est:1}, for each $0\le n \le N$, we have that
	\begin{equation*}
		\begin{aligned}
			&\|u_{\ell,k}^{n}\|_{L^\alpha(\Omega)} =\left(\int_{\Omega} |u_{\ell,k}^{n}|^\alpha \dd x\right)^{1/\alpha}
			=\left(\int_{\Omega} \left||u_{\ell,k}^{n}|^{\alpha-2}u_{\ell,k}^{n}\right|^\frac{\alpha}{\alpha-1} \dd x\right)^{1/\alpha}\\
			&=\left(\int_{\Omega} \left||u_{\ell,k}^{n}|^{\alpha-2}u_{\ell,k}^{n}\right|^{\alpha'} \dd x\right)^\frac{1}{\alpha' (\alpha-1)}
			=\left\||u_{\ell,k}^{n}|^{\alpha-2} u_{\ell,k}^{n}\right\|_{L^{\alpha'}(\Omega)}^{\alpha'/\alpha}.
		\end{aligned}
	\end{equation*}
	
	Since the first term is bounded independently of $N$ and $\ell$, we immediately infer that
	\begin{equation}
		\label{chi-kappa}
		\max_{0\le n\le N} \left\||u_{\ell,k}^{n}|^{\alpha-2} u_{\ell,k}^{n}\right\|_{L^{\alpha'}(\Omega)} \le C,
	\end{equation}
	for some $C>0$ independent of $N$ and $\ell$, thus establishing~\eqref{est:5:1}, and completing the proof.
\end{proof}

Given $\bm{v}_{\ell,k}=\{v_{\ell,k}^{n}\}_{n=0}^{N}$, the function $D_k(\Pi_k \bm{v}_{\ell,k}):(0,T) \to W_0^{1,p}(\Omega)$ is defined by
\begin{equation}
	\label{finite-difference}
	D_k(\Pi_k \bm{v}_{\ell,k})(t):=\dfrac{v_{\ell,k}^{n+1}-v_{\ell,k}^{n}}{k},\quad\textup{ for all } n k < t \le (n+1)k, \quad 0 \le n \le N-1.
\end{equation}

As a result of the estimates~\eqref{est:1}-\eqref{est:5:1}, we have that
\begin{equation}
	\label{estimates}
	\begin{aligned}
		\{\Pi_k \bm{u}_{\ell,k}\}_{k >0} &\textup{ is bounded in } L^\infty(0,T;W_0^{1,p}(\Omega)) \textup{ independently of }k,\\
		\{B_\ell(\Pi_k \bm{u}_{\ell,k})\}_{k >0} &\textup{ is bounded in } L^\infty(0,T;W^{-1,p'}(\Omega))\textup{ independently of }k,\\
		\{|\Pi_k \bm{u}_{\ell,k}|^\frac{\alpha-2}{2}\Pi_k \bm{u}_{\ell,k}\}_{k >0} &\textup{ is bounded in } L^\infty(0,T;L^2(\Omega))\textup{ independently of }k,\\
		\{D_k(|\Pi_k \bm{u}_{\ell,k}|^\frac{\alpha-2}{2}\Pi_k \bm{u}_{\ell,k})\}_{k >0} &\textup{ is bounded in } L^\infty(0,T;L^2(\Omega))\textup{ independently of }k,\\
		\{|\Pi_k \bm{u}_{\ell,k}|^{\alpha-2} \Pi_k \bm{u}_{\ell,k}\}_{k >0} &\textup{ is bounded in } L^\infty(0,T;L^{\alpha'}(\Omega))\textup{ independently of }k,\\
		\{D_k(|\Pi_k \bm{u}_{\ell,k}|^{\alpha-2}\Pi_k \bm{u}_{\ell,k})\}_{k >0} &\textup{ is bounded in } L^\infty(0,T;W^{-1,p'}(\Omega))\textup{ independently of }k.
	\end{aligned}
\end{equation}

Thanks to~\eqref{estimates}, we can establish the existence of solutions for Problem~\eqref{Pkappa}.

\begin{theorem}
	\label{thm:3}
	Let $T>0$, $\Omega \subset \mathbb{R}^2$ and $p$ be as in Section~\ref{Sec:2} and let $\alpha$ be as in~\eqref{alpha}. Let $\ell>0$ be given, let $N \ge 1$ be an integer, and define $k:=T/N$.
	Assume that ($H$\ref{H1})--($H$\ref{H4}) hold.
	The \emph{a priori} estimates~\eqref{estimates} imply that the following convergence process takes place (recall that $B_\ell$ has been defined in~\eqref{Bkappa}):
	\begin{equation}
		\label{conv-proc}
		\begin{gathered}
			\Pi_k \bm{u}_{\ell,k} \wsc u_\ell \textup{ in } L^\infty(0,T;W_0^{1,p}(\Omega)),\\
			B_\ell(\Pi_k \bm{u}_{\ell,k}) \wsc g_\ell\textup{  in } L^\infty(0,T;W^{-1,p'}(\Omega)),\\
			|\Pi_k \bm{u}_{\ell,k}|^\frac{\alpha-2}{2}\Pi_k \bm{u}_{\ell,k} \wsc v_\ell \textup{ in } L^\infty(0,T;L^2(\Omega)),\\
			D_k(|\Pi_k \bm{u}_{\ell,k}|^\frac{\alpha-2}{2}\Pi_k \bm{u}_{\ell,k}) \rightharpoonup \dfrac{\dd v_\ell}{\dd t}\textup{ in } L^2(0,T;L^2(\Omega)),\\
			|\Pi_k \bm{u}_{\ell,k}|^{\alpha-2} \Pi_k \bm{u}_{\ell,k} \wsc w_\ell \textup{ in } L^\infty(0,T;L^{\alpha'}(\Omega)),\\
			D_k(|\Pi_k \bm{u}_{\ell,k}|^{\alpha-2}\Pi_k \bm{u}_{\ell,k}) \wsc \dfrac{\dd w_\ell}{\dd t} \textup{ in } L^\infty(0,T;W^{-1,p'}(\Omega)),\\
			|u_{\ell,k}^N|^{\alpha-2} u_{\ell,k}^N \rightharpoonup \chi_\ell \textup{ in } L^{\alpha'}(\Omega).
		\end{gathered}
	\end{equation}
	
	Besides, the weak-star limit $u_\ell$ recovered in the first convergence of~\eqref{conv-proc} is a solution for Problem~\ref{Pkappa}, and the weak-star limits $v_\ell$ and $w_\ell$ satisfy
	\begin{equation*}
		\begin{aligned}
			v_\ell&=|u_\ell|^\frac{\alpha-2}{2} u_\ell,\\
			w_\ell&=|u_\ell|^{\alpha-2} u_\ell.
		\end{aligned}
	\end{equation*}
\end{theorem}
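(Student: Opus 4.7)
The plan is to first extract weak-star convergent subsequences from the uniform bounds~\eqref{estimates}, then upgrade these to strong (and a.e.) convergence via the discrete Dubinskii theorem in order to identify $v_\kappa$ and $w_\kappa$ with the claimed nonlinear expressions in $u_\kappa$, next pass to the limit in~\eqref{penalty:seq} handling the nonlinear $p$-Laplacian via Minty's monotonicity trick, and finally recover the initial condition through Lemma~\ref{lem:3}.

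Since each of $W_0^{1,p}(\Omega)$, $L^2(\Omega)$, $L^{\alpha'}(\Omega)$ and $W^{-1,p'}(\Omega)$ is a separable reflexive Banach space, the corresponding $L^\infty(0,T;X)$ is identified with the dual of the separable $L^1(0,T;X^\ast)$, and Banach--Alaoglu yields weak-star cluster points of each of the six $L^\infty$-bounded sequences in~\eqref{estimates}; reflexivity of $L^2(0,T;L^2(\Omega))$ provides weak convergence of the discrete time derivative $D_\ell\bigl(|\Pi_\ell\bm{u}_{\kappa,\ell}|^{(\alpha-2)/2}\Pi_\ell\bm{u}_{\kappa,\ell}\bigr)$. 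A diagonal extraction then selects a single subsequence realizing all seven convergences of~\eqref{conv-proc}, and the weak limits of the discrete time derivatives are identified with the distributional derivatives of the weak-star limits of their primitives by testing against $\varphi\otimes w$ with $\varphi\in\mathcal{D}(0,T)$ and integrating by parts in time.

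Next, I would apply the discrete Dubinskii theorem (Theorem~\ref{Dub:dis}) to the sequence $\bm{w}_{\kappa,\ell}:=(|u_{\kappa,\ell}^n|^{\alpha-2}u_{\kappa,\ell}^n)_n$, with $A_0:=L^2(\Omega)$, $A_1:=W^{-1,p'}(\Omega)$, $S$ equal to the image of $W_0^{1,p}(\Omega)$ under the pointwise homeomorphism $\phi(s):=|s|^{\alpha-2}s$ of $\mathbb{R}$, and seminorm $M(\phi(v)):=\|v\|_{W_0^{1,p}(\Omega)}^{\alpha-1}$. Relative compactness in $L^2(\Omega)$ of $\{M\le 1\}=\phi(B_{W_0^{1,p}(\Omega)})$ follows from the compact embedding~\eqref{emb} of $W_0^{1,p}(\Omega)$ into $\mathcal{C}^0(\overline{\Omega})$ together with the continuity of the scalar map $\phi$, while the discrete-norm hypotheses with $(q_0,q_1)=(p/(\alpha-1),\infty)$ follow from~\eqref{est:2} and~\eqref{est:5}. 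Theorem~\ref{Dub:dis} then produces strong convergence of $|\Pi_\ell\bm{u}_{\kappa,\ell}|^{\alpha-2}\Pi_\ell\bm{u}_{\kappa,\ell}$ in $L^{p/(\alpha-1)}(0,T;L^2(\Omega))$ and, after one further extraction, pointwise a.e.\ convergence to $w_\kappa$; since $\phi$ is a homeomorphism of $\mathbb{R}$, this forces $\Pi_\ell\bm{u}_{\kappa,\ell}\to u_\kappa:=\phi^{-1}(w_\kappa)$ a.e., so $w_\kappa=|u_\kappa|^{\alpha-2}u_\kappa$. The uniform $L^\infty(0,T;\mathcal{C}^0(\overline{\Omega}))$ bound from~\eqref{emb} together with dominated convergence then yields $v_\kappa=|u_\kappa|^{(\alpha-2)/2}u_\kappa$ identically.

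With $u_\kappa$ in hand, multiplying~\eqref{penalty:seq} by $v\,\varphi(t)$ with $v\in W_0^{1,p}(\Omega)$ and $\varphi\in\mathcal{D}(0,T)$ and integrating over $(0,T)$, I would pass to the limit $\ell\to 0^+$ term by term: the time-derivative term converges by the weak-star convergence of $D_\ell\bigl(|\Pi_\ell\bm{u}_{\kappa,\ell}|^{\alpha-2}\Pi_\ell\bm{u}_{\kappa,\ell}\bigr)$ in $L^\infty(0,T;W^{-1,p'}(\Omega))$ together with the identification $w_\kappa=|u_\kappa|^{\alpha-2}u_\kappa$; the penalty term by the Lipschitz property of the negative-part operator (Lemma~\ref{lem:1}) and the strong convergence from the previous step; and the source term by hypothesis~$(H4)$. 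The main obstacle is the nonlinear $p$-Laplacian term, since only weak $L^p$-convergence of $\nabla\Pi_\ell\bm{u}_{\kappa,\ell}$ is available, and I would treat it by Minty's trick. Specifically, using the nonnegative expression
\[
\int_{0}^{T}\!\!\int_{\Omega}\mu\bigl(|\nabla\Pi_\ell\bm{u}_{\kappa,\ell}|^{p-2}\nabla\Pi_\ell\bm{u}_{\kappa,\ell}-|\nabla w|^{p-2}\nabla w\bigr)\cdot(\nabla\Pi_\ell\bm{u}_{\kappa,\ell}-\nabla w)\dd x\dd t\ge 0
\]
from the second inequality of Lemma~\ref{lem:7} for arbitrary $w\in L^p(0,T;W_0^{1,p}(\Omega))$, the quadratic term $\int\mu|\nabla\Pi_\ell\bm{u}_{\kappa,\ell}|^p$ is eliminated via the discrete energy identity obtained by testing~\eqref{penalty:seq} with $u_{\kappa,\ell}^{n+1}$ and its continuous counterpart provided by Lemma~\ref{lem:8}; letting $\ell\to 0^+$ and then setting $w=u_\kappa\pm\lambda h$ with $\lambda\to 0^+$ identifies the weak-$L^{p'}$ limit of $|\nabla\Pi_\ell\bm{u}_{\kappa,\ell}|^{p-2}\nabla\Pi_\ell\bm{u}_{\kappa,\ell}$ with $|\nabla u_\kappa|^{p-2}\nabla u_\kappa$. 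Finally, Lemma~\ref{lem:3} guarantees $|u_\kappa|^{(\alpha-2)/2}u_\kappa\in\mathcal{C}^0([0,T];L^2(\Omega))$, so $u_\kappa(0)$ is well defined; since $u_{\kappa,\ell}^0=u_0$ for every $\ell$, integrating by parts in time against a test function $\xi(t)w$ with $\xi\in\mathcal{C}^1([0,T])$ and $\xi(T)=0$ in the discrete weak form and letting $\ell\to 0^+$ identifies $v_\kappa(0)=|u_0|^{(\alpha-2)/2}u_0$, hence $u_\kappa(0)=u_0$.
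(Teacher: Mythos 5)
Your proposal is correct and follows the same overall strategy as the paper's proof (Banach--Alaoglu for the weak-star limits, a discrete Dubinskii compactness argument, Minty's trick for the $p$-Laplacian, and an integration by parts in time to recover the initial condition), but your implementation of the Dubinskii step is a genuine variation that is worth pointing out. The paper runs \emph{two} parallel Dubinskii arguments: for $w_\kappa$ it takes $A_0=L^{(\alpha'-1)p}(\Omega)$, $A_1=W^{-1,p'}(\Omega)$, seminorm $M(v)=\|\nabla(|v|^{\alpha'-2}v)\|_{L^p(\Omega)}^{1/(\alpha'-1)}$, and $(q_0,q_1)=(2,2)$, and proves relative compactness of $\{M\le 1\}$ in $L^{(\alpha'-1)p}(\Omega)$ by feeding the Rellich--Kondrachov embedding into $L^p(\Omega)$ through Lemma~\ref{lem:2-0} and a uniform-convexity argument, then identifies $w_\kappa$ via a second layer of monotonicity (Theorem~9.13-2 of~\cite{PGCLNFAA}); it then repeats the whole procedure with $A_0=L^{2p/\alpha}(\Omega)$, $A_1=L^2(\Omega)$ for $v_\kappa$. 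You instead push $W_0^{1,p}(\Omega)$ forward under the homeomorphism $\phi(s)=|s|^{\alpha-2}s$, work with $A_0=L^2(\Omega)$, $A_1=W^{-1,p'}(\Omega)$, the seminorm $M(\phi(v))=\|v\|_{W_0^{1,p}(\Omega)}^{\alpha-1}$ and $(q_0,q_1)=(p/(\alpha-1),\infty)$ (permissible since $p/(\alpha-1)>1$), obtain relative compactness of $\{M\le 1\}$ in $L^2(\Omega)$ directly from $W_0^{1,p}(\Omega)\hookrightarrow\hookrightarrow\mathcal{C}^0(\overline{\Omega})$ and the uniform continuity of $\phi$ on bounded intervals, and then exploit the resulting a.e.\ convergence together with the homeomorphism property to identify \emph{both} $w_\kappa=|u_\kappa|^{\alpha-2}u_\kappa$ (via $\phi^{-1}$) and $v_\kappa=|u_\kappa|^{(\alpha-2)/2}u_\kappa$ (via dominated convergence under the uniform $\mathcal{C}^0$ bound coming from~\eqref{est:4} and~\eqref{emb}) in a single pass, entirely bypassing Lemma~\ref{lem:2-0}, the uniform-convexity step, and the second Minty identification. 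What your route buys is economy and transparency --- one Dubinskii application instead of two, and a.e.\ convergence in place of repeated weak-monotonicity arguments --- at the price of leaning more heavily on the high-regularity embedding $W_0^{1,p}(\Omega)\hookrightarrow\hookrightarrow\mathcal{C}^0(\overline{\Omega})$, which is available here precisely because $\Omega\subset\mathbb{R}^2$ and $p>2$; the paper's route, relying only on $W_0^{1,p}\hookrightarrow\hookrightarrow L^p$, is dimensionally more robust. The remaining steps (limit passage in~\eqref{penalty:seq}, the discrete energy identity combined with Lemma~\ref{lem:8}, the Minty trick for the elliptic operator, and the test-function argument with $\xi(T)=0$ to read off the initial datum) match the paper essentially verbatim.
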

\begin{proof}
	The convergence process~\eqref{conv-proc} holds by virtue of an application of the Banach-Alaoglu-Bourbaki theorem (cf., e.g., Theorem~3.6 of~\cite{Brez11}) to the estimates~\eqref{estimates} and~\eqref{chi-kappa}.
	The nontivial part of the proof amounts to identifying the weak-star limits $v_\ell$ and $w_\ell$ and to showing that the weak-star limit $u_\ell$ solves Problem~\ref{Pkappa}.
	
	To begin with, we show that $w_\ell=|u_\ell|^{\alpha-2} u_\ell$. Given any $u \in W_0^{1,p}(\Omega)$, we look for a number $\beta \in \mathbb{R}$ for which
	\begin{equation*}
		\left||u|^{\alpha-2} u\right|^{\beta-2} |u|^{\alpha-2} u =u.
	\end{equation*}
	
	Simple algebraic manipulations transform the latter into
	\begin{equation*}
		|u|^{(\alpha-2)(\beta-2) +(\beta-2)+(\alpha-2)} u = u,
	\end{equation*} and we observe that a sufficient condition insuring this is that $\beta$ satisfies
	\begin{equation*}
		(\alpha-1)(\beta-2)+(\alpha-2)=0,
	\end{equation*}
	which is equivalent to writing
	\begin{equation*}
		\beta= 2+\dfrac{2-\alpha}{\alpha-1}=\dfrac{2\alpha-2+2-\alpha}{\alpha-1}=\dfrac{\alpha}{\alpha-1}=\alpha'.
	\end{equation*}
	
	Let $v:=|u|^{\alpha-2} u$ and observe that if $\beta=\alpha'$ then $|v|^{\beta-2} v \in W_0^{1,p}(\Omega)$ so that the set
	\begin{equation*}
		S:=\{v;(|v|^{\alpha'-2}v) \in W_0^{1,p}(\Omega)\}
	\end{equation*}
	is non-empty. Define the semi-norm
	\begin{equation*}
		M(v):=\left\|\nabla(|v|^{\alpha'-2}v)\right\|_{\bm{L}^p(\Omega)}^\frac{1}{\alpha'-1},\quad\textup{ for all }v \in S,
	\end{equation*}
	and define the set 
	\begin{equation*}
		\mathscr{M}:=\{v\in S; M(v)\le 1\}.
	\end{equation*}
	
	An application of the Poincar\'e-Friedrichs inequality gives that there exists a constant $c_0=c_0(\Omega)>0$ such that
	\begin{equation*}
		\begin{aligned}
			1 &\ge M(v) \ge c_0^\frac{1}{\alpha'-1} \left\||v|^{\alpha'-2} v\right\|_{W_0^{1,p}(\Omega)}^\frac{1}{\alpha'-1} \ge c_0^\frac{1}{\alpha'-1} \left\||v|^{\alpha'-2} v\right\|_{L^p(\Omega)}^\frac{1}{\alpha'-1}\\
			&=c_0^\frac{1}{\alpha'-1} \left(\int_{\Omega} \left||v|^{\alpha'-2} v\right|^p\dd x\right)^{1/(p(\alpha'-1))}
			=c_0^\frac{1}{\alpha'-1} \left(\int_{\Omega} |v|^{(\alpha'-1)p}\dd x\right)^{1/(p(\alpha'-1))}=c_0^\frac{1}{\alpha'-1} \|v\|_{L^{(\alpha'-1)p}(\Omega)}.
		\end{aligned}
	\end{equation*}
	
	Let $\{v_m\}_{m=1}^\infty$ be a sequence in $\mathscr{M}$. Since, by the Rellich-Kondra\v{s}ov theorem, we have that $W_0^{1,p}(\Omega) \hookrightarrow\hookrightarrow L^p(\Omega)$ we obtain that, up to passing to a subsequence, there exists an element $w \in L^p(\Omega)$ such that
	\begin{equation}
		\label{conv-1}
		(|v_m|^{\alpha'-2} v_m) \to w,\quad\textup{ in } L^p(\Omega)\textup{ as }m\to\infty.
	\end{equation}
	
	Since $1<\alpha<2$ and $2.8\le p \le 5$, then $\alpha'>2$ and it thus results that $1<p'<p<(\alpha'-1)p<\infty$ (so that $L^{(\alpha'-1)p}(\Omega)$ is uniformly convex; cf., e.g., \cite{Brez11}) and that $\{v_m\}_{m=1}^\infty$ is bounded in $L^{p'}(\Omega)$. The reflexivity of $L^{p'}(\Omega)$ puts us in a position to apply the Banach-Eberlein-Smulian theorem (cf., e.g., Theorem~5.14-4 of~\cite{PGCLNFAA}) and extract a subsequence, still denoted $\{v_m\}_{m=1}^\infty$, that weakly converges to an element $v \in L^{(\alpha'-1)p}(\Omega) \hookrightarrow L^{p'}(\Omega)$.
	Consider the mapping
	\begin{equation*}
		v \in L^{p'}(\Omega) \mapsto (|v|^{\alpha'-2} v) \in L^p(\Omega),
	\end{equation*}
	and observe that this mapping is hemi-continuous and monotone, being associated with the mapping $\xi\in \mathbb{R} \to (|\xi|^{\alpha'-2}\xi) \in \mathbb{R}$, with $\alpha'>2$ thanks to~\eqref{alpha}, which is continuous and monotone.
	Therefore, an application of Theorem~9.13-2 of~\cite{PGCLNFAA} gives that $w=|v|^{\alpha'-2}v \in L^p(\Omega)$.
	Therefore, the convergence~\eqref{conv-1} reads:
	\begin{equation}
		\label{conv-1-new}
		(|v_m|^{\alpha'-2} v_m) \to w=(|v|^{\alpha'-2} v),\quad\textup{ in } L^p(\Omega)\textup{ as }m\to\infty.
	\end{equation}
	
	In order to show that $\mathscr{M}$ is relatively compact in $L^{(\alpha'-1)p}(\Omega)$, we have to show that every sequence $\{v_m\}_{m=1}^\infty \subset \mathscr{M}$ admits a convergent subsequence in $L^{(\alpha'-1)p}(\Omega)$. Observe that we can extract a subsequence, still denoted $\{v_m\}_{m=1}^\infty$ that weakly converges to an element $v$ in $L^{(\alpha'-1)p}(\Omega)$. Since $(\alpha'-1)p>1$, an application of Lemma~\ref{lem:2-0} gives:
	\begin{equation*}
		\begin{aligned}
			&\left|\|v_m\|_{L^{(\alpha'-1)p}(\Omega)}-\|v\|_{L^{(\alpha'-1)p}(\Omega)}\right|
			=\left|\left(\int_{\Omega} |v_m|^{(\alpha'-1)p} \dd x\right)^{\frac{1}{(\alpha'-1)p}}-\left(\int_{\Omega} |v|^{(\alpha'-1)p} \dd x\right)^{\frac{1}{(\alpha'-1)p}}\right|\\
			&\le\left|\int_{\Omega} |v_m|^{(\alpha'-1)p} \dd x-\int_{\Omega} |v|^{(\alpha'-1)p} \dd x\right|^{\frac{1}{(\alpha'-1)p}}
			=\left|\int_{\Omega} \left||v_m|^{\alpha'-2}v_m\right|^p \dd x-\int_{\Omega} \left||v|^{\alpha'-2}v\right|^p \dd x\right|^{\frac{1}{(\alpha'-1)p}}\\
			&=\left|\left\||v_m|^{\alpha'-2}v_m\right\|_{L^p(\Omega)}^p-\left\||v|^{\alpha'-2}v\right\|_{L^p(\Omega)}^p\right|^{\frac{1}{(\alpha'-1)p}}.
		\end{aligned}
	\end{equation*}
	
	An application of~\eqref{conv-1-new} gives that the right-hand side of the latter term tends to zero as $m \to \infty$, thus establishing that
	\begin{equation*}
		\|v_m\|_{L^{(\alpha'-1)p}(\Omega)}\to\|v\|_{L^{(\alpha'-1)p}(\Omega)},\quad\textup{ as }m\to\infty.
	\end{equation*}
	
	Since the space $L^{(\alpha'-1)p}(\Omega)$ is uniformly convex, an application of Theorem~5.12-3 of~\cite{PGCLNFAA} gives that 
	\begin{equation*}
		v_m \to v,\quad\textup{ in }L^{(\alpha'-1)p}(\Omega)\textup{ as }m\to\infty,
	\end{equation*}
	thus establishing the sought relative compactness.
	
	The established relative compactness of the set $\mathscr{M}$ in $L^{(\alpha'-1)p}(\Omega)$ and the sixth convergence in the process~\eqref{conv-proc} (which in turn implies that the time-derivatives in the sense of distributions are uniformly bounded) allow us to apply the Dubinskii compactness theorem (Lemma~\ref{Dub:dis}) with $A_0=L^{(\alpha'-1)p}(\Omega)$, $A_1=W^{-1,p'}(\Omega)$, $q_0=q_1=2$, so that
	\begin{equation}
		\label{conv-2}
		|\Pi_k \bm{u}_{\ell,k}|^{\alpha-2} \Pi_k \bm{u}_{\ell,k} \to w_\ell,\quad\textup{ in } L^2(0,T;L^{(\alpha'-1)p}(\Omega)) \textup{ as } k \to 0,
	\end{equation}
	where, once again, the monotonicity of the mapping $\xi\in\mathbb{R} \mapsto |\xi|^{\alpha-2} \xi$, the first convergence in the process~\eqref{conv-proc} and Theorem~9.13-2 of~\cite{PGCLNFAA} imply that
	\begin{equation*}
		w_\ell = |u_\ell|^{\alpha-2} u_\ell.
	\end{equation*}
	
	Second, we show that $v_\ell=|u_\ell|^\frac{\alpha-2}{2} u_\ell$. Given any $u \in W_0^{1,p}(\Omega)$, we look for a number $\beta \in \mathbb{R}$ for which
	\begin{equation*}
		\left||u|^\frac{\alpha-2}{2} u\right|^\frac{\beta-2}{2} |u|^\frac{\alpha-2}{2} u =u.
	\end{equation*}
	
	We observe that a sufficient condition insuring this is that $\beta$ satisfies
	\begin{equation*}
		\left(\dfrac{\alpha-2}{2}+1\right)\dfrac{\beta-2}{2}+\dfrac{\alpha-2}{2}=0,
	\end{equation*}
	which is equivalent to writing
	\begin{equation*}
		\beta= 2+2\left(\dfrac{2-\alpha}{2}\dfrac{2}{\alpha}\right)=\dfrac{4}{\alpha}.
	\end{equation*}
	
	Let $v:=|u|^\frac{\alpha-2}{2} u$ and observe that if $\beta=4/\alpha$ then $|v|^\frac{\beta-2}{2} v \in W_0^{1,p}(\Omega)$ so that the set
	\begin{equation*}
		\tilde{S}:=\{v;(|v|^\frac{(4/\alpha)-2}{2}v) \in W_0^{1,p}(\Omega)\}
	\end{equation*}
	is non-empty. Define the semi-norm
	\begin{equation*}
		\tilde{M}(v):=\left\|\nabla(|v|^\frac{(4/\alpha)-2}{2}v)\right\|_{\bm{L}^p(\Omega)}^\frac{\alpha}{2}=\left\|\nabla(|v|^\frac{2-\alpha}{\alpha} v)\right\|_{\bm{L}^p(\Omega)}^\frac{\alpha}{2},\quad\textup{ for all }v \in \tilde{S},
	\end{equation*}
	and define the set 
	\begin{equation*}
		\tilde{\mathscr{M}}:=\{v\in \tilde{S}; \tilde{M}(v)\le 1\}.
	\end{equation*}
	
	An application of the Poincar\'e-Friedrichs inequality gives that there exists a constant $c_0=c_0(\Omega)>0$ such that
	\begin{equation*}
		\begin{aligned}
			1 &\ge \tilde{M}(v) \ge c_0^\frac{\alpha}{2} \left\||v|^\frac{2-\alpha}{\alpha} v\right\|_{W_0^{1,p}(\Omega)}^\frac{\alpha}{2} \ge c_0^\frac{\alpha}{2} \left\||v|^\frac{2-\alpha}{\alpha} v\right\|_{L^p(\Omega)}^\frac{\alpha}{2}
			=c_0^\frac{\alpha}{2} \left(\int_{\Omega} \left||v|^\frac{2-\alpha}{\alpha} v\right|^p\dd x\right)^{\alpha/(2p)}\\
			&=c_0^\frac{\alpha}{2} \left(\int_{\Omega} |v|^\frac{2p}{\alpha}\dd x\right)^{\alpha/(2p)}
			=c_0^\frac{\alpha}{2} \|v\|_{L^\frac{2p}{\alpha}(\Omega)}.
		\end{aligned}
	\end{equation*}
	
	Let $\{v_m\}_{m=1}^\infty$ be a sequence in $\tilde{\mathscr{M}}$. Since, by the Rellich-Kondra\v{s}ov theorem, we have that $W_0^{1,p}(\Omega) \hookrightarrow\hookrightarrow L^p(\Omega)$ we obtain that, up to passing to a subsequence, there exists an element $w \in L^p(\Omega)$ such that
	\begin{equation}
		\label{conv-3}
		(|v_m|^\frac{2-\alpha}{2} v_m) \to w,\quad\textup{ in } L^p(\Omega)\textup{ as }m\to\infty.
	\end{equation}
	
	Since $1<\alpha<2$ and $2.8\le p \le 5$, it thus results that $1 \le p' <2<p<\frac{2p}{\alpha}<2p<\infty$ and that $\{v_m\}_{m=1}^\infty$ is bounded in $L^\frac{2p}{\alpha}(\Omega)$. The reflexivity of $L^\frac{2p}{\alpha}(\Omega)$ puts us in a position to apply the Banach-Eberlein-Smulian theorem (cf., e.g., Theorem~5.14-4 of~\cite{PGCLNFAA}) and extract a subsequence, still denoted $\{v_m\}_{m=1}^\infty$, that weakly converges to an element $v \in L^{p'}(\Omega)$.
	Consider the mapping
	\begin{equation*}
		v \in L^{p'}(\Omega) \mapsto (|v|^\frac{2-\alpha}{2} v) \in L^p(\Omega),
	\end{equation*}
	and observe that this mapping is hemi-continuous and monotone, since the mapping $\xi\in \mathbb{R} \to (|\xi|^\frac{2-\alpha}{2}\xi) \in \mathbb{R}$ is continuous and monotone.
	Therefore, an application of Theorem~9.13-2 of~\cite{PGCLNFAA} gives that $w=|v|^\frac{2-\alpha}{\alpha}v \in L^p(\Omega)$.
	Hence, the convergence~\eqref{conv-1} reads:
	\begin{equation}
		\label{conv-3-new}
		(|v_m|^\frac{2-\alpha}{\alpha} v_m) \to w=(|v|^\frac{2-\alpha}{\alpha} v),\quad\textup{ in } L^p(\Omega)\textup{ as }m\to\infty.
	\end{equation}
	
	In order to show that $\tilde{\mathscr{M}}$ is relatively compact in $L^\frac{2p}{\alpha}(\Omega)$, we have to show that every sequence $\{v_m\}_{m=1}^\infty \subset \tilde{\mathscr{M}}$ admits a convergent subsequence in $L^\frac{2p}{\alpha}(\Omega)$.
	
	Since $2p/\alpha>1$, an application of Lemma~\ref{lem:2-0} gives:
	\begin{equation*}
		\begin{aligned}
			&\left|\|v_m\|_{L^\frac{2p}{\alpha}(\Omega)}-\|v\|_{L^\frac{2p}{\alpha}(\Omega)}\right|
			=\left|\left(\int_{\Omega} |v_m|^{\frac{2p}{\alpha}} \dd x\right)^{\frac{\alpha}{2p}}-\left(\int_{\Omega} |v|^{\frac{2p}{\alpha}} \dd x\right)^{\frac{\alpha}{2p}}\right|\\
			&\le\left|\int_{\Omega} |v_m|^{\frac{2p}{\alpha}} \dd x-\int_{\Omega} |v|^{\frac{2p}{\alpha}} \dd x\right|^{\frac{\alpha}{2p}}
			=\left|\int_{\Omega} \left||v_m|^{\frac{2-\alpha}{\alpha}}v_m\right|^p \dd x-\int_{\Omega} \left||v|^{\frac{2-\alpha}{\alpha}}v\right|^p \dd x\right|^{\frac{\alpha}{2p}}\\
			&=\left|\left\||v_m|^{\frac{2-\alpha}{\alpha}}v_m\right\|_{L^p(\Omega)}^p-\left\||v|^{\frac{2-\alpha}{\alpha}}v\right\|_{L^p(\Omega)}^p\right|^{\frac{\alpha}{2p}}.
		\end{aligned}
	\end{equation*}
	
	An application of~\eqref{conv-3-new} gives that the right-hand side of the latter term tends to zero as $m \to \infty$, thus establishing that
	\begin{equation*}
		\|v_m\|_{L^\frac{2p}{\alpha}(\Omega)}\to\|v\|_{L^\frac{2p}{\alpha}(\Omega)},\quad\textup{ as }m\to\infty.
	\end{equation*}
	
	Since the space $L^\frac{2p}{\alpha}(\Omega)$ is uniformly convex, an application of Theorem~5.12-3 of~\cite{PGCLNFAA} gives that 
	\begin{equation*}
		v_m \to v,\quad\textup{ in } L^\frac{2p}{\alpha}(\Omega) \textup{ as }m\to\infty,
	\end{equation*}
	thus establishing the sought relative compactness.
	
	The latter shows that
	\begin{equation*}
		v_m \to v,\quad\textup{ in } L^\frac{2p}{\alpha}(\Omega) \textup{ as } m\to\infty,
	\end{equation*}
	in turn implying that the set $\tilde{\mathscr{M}}$ is relatively compact in $L^\frac{2p}{\alpha}(\Omega)$, as it was to be proved. The established relative compactness of the set $\tilde{\mathscr{M}}$ in $L^\frac{2p}{\alpha}(\Omega)$ and the fourth convergence in the process~\eqref{conv-proc} (which in turn implies that the time-derivatives in the sense of distributions are bounded independently of $m$) allow us to apply the Dubinskii compactness theorem (Lemma~\ref{Dub:dis}) with $A_0=L^\frac{2p}{\alpha}(\Omega)$, $A_1=L^2(\Omega)$, $q_0=q_1=2$, so that
	\begin{equation}
		\label{conv-4}
		|\Pi_k \bm{u}_{\ell,k}|^\frac{\alpha-2}{2} \Pi_k \bm{u}_{\ell,k} \to v_\ell,\quad\textup{ in } L^2(0,T;L^\frac{2p}{\alpha}(\Omega)) \textup{ as } k \to 0,
	\end{equation}
	where, once again, the monotonicity of the mapping $\xi\in\mathbb{R} \mapsto |\xi|^\frac{\alpha-2}{2} \xi$, the first convergence in the process~\eqref{conv-proc} and Theorem~9.13-2 of~\cite{PGCLNFAA} imply that
	\begin{equation*}
		v_\ell = |u_\ell|^\frac{\alpha-2}{2} u_\ell,
	\end{equation*}
	showing that the entire convergence process~\eqref{conv-proc} holds.
	
	We are left to show that the weak-star limit $u_\ell$ is a solution for Problem~\ref{Pkappa}. Let $v \in \mathcal{D}(\Omega)$ and let $\psi \in \mathcal{C}^1([0,T])$. For each $0 \le n \le N-1$, multiply~\eqref{penalty:seq} by $\{v \psi(nk)\}$, getting
	\begin{equation}
		\label{step:1}
		\begin{aligned}
			&\dfrac{\psi(nk)}{k}\int_{\Omega}\{|u_{\ell,k}^{n+1}|^{\alpha-2} u_{\ell,k}^{n+1} - |u_{\ell,k}^{n}|^{\alpha-2}u_{\ell,k}^{n}\} v \dd x\\
			&\quad+\psi(nk)\int_{\Omega}\mu |\nabla u_{\ell,k}^{n+1}|^{p-2} \nabla u_{\ell,k}^{n+1} \cdot \nabla v\dd x
			-\psi(nk)\int_{\Omega}\dfrac{\{u_{\ell,k}^{n+1}\}^{-}}{\ell} v \dd x\\
			&=\int_{\Omega}\left(\dfrac{1}{k} \int_{nk}^{(n+1)k} \tilde{a}(t) \dd t\right) v \psi(nk) \dd x.
		\end{aligned}
	\end{equation}
	
	Multiplying~\eqref{step:1} by $k$ and summing over $0 \le n \le N-1$, we obtain
	\begin{equation}
		\label{step:2}
		\begin{aligned}
			&\sum_{n=0}^{N-1} k \int_{\Omega}\dfrac{|u_{\ell,k}^{n+1}|^{\alpha-2} u_{\ell,k}^{n+1} - |u_{\ell,k}^{n}|^{\alpha-2}u_{\ell,k}^{n}}{k} v \psi(nk) \dd x\\
			&\quad+\sum_{n=0}^{N-1} k \int_{\Omega}\mu |\nabla u_{\ell,k}^{n+1}|^{p-2} \nabla u_{\ell,k}^{n+1} \cdot \nabla(\psi(nk) v)\dd x\\
			&\quad-\dfrac{1}{\ell} \sum_{n=0}^{N-1} k \int_{\Omega} \{u_{\ell,k}^{n+1}\}^{-} v \psi(nk) \dd x
			=\sum_{n=0}^{N-1} k \int_{\Omega}\left(\dfrac{1}{k} \int_{nk}^{(n+1)k} \tilde{a}(t) \dd t\right) v \psi(nk) \dd x.
		\end{aligned}
	\end{equation}
	
	We define $\psi_k(t):=\psi(nk)$, $nk \le t \le (n+1)k$ and $0 \le n \le N-1$. Equation~\eqref{step:2} can be thus re-arranged as follows:
	\begin{equation}
		\label{step:3}
		\begin{aligned}
			&\int_{0}^{T} \int_{\Omega} D_k(|\Pi_k \bm{u}_{\ell,k}|^{\alpha-2}\Pi_k \bm{u}_{\ell,k}) v \dd x \psi_k(t) \dd t\\
			&\quad-\int_{0}^{T} \int_{\Omega} \nabla \cdot \left(\mu |\nabla (\Pi_k \bm{u}_{\ell,k})|^{p-2} \nabla (\Pi_k \bm{u}_{\ell,k})\right) v\dd x \psi_k(t) \dd t\\
			&\quad-\dfrac{1}{\ell} \int_{0}^{T} \int_{\Omega} \{\Pi_k \bm{u}_{\ell,k}\}^{-} v\dd x \psi_k(t) \dd t
			=\int_{0}^{T} \left(\int_{\Omega} \tilde{a}(t) v \dd x\right) \psi_k(t) \dd t.
		\end{aligned}
	\end{equation}
	
	Letting $k \to 0$ and exploiting the convergence process~\eqref{conv-proc} and the Riemann integrability of $\psi$, we obtain:
	\begin{equation}
		\label{step:4}
		\begin{aligned}
			&\int_{0}^{T} \left\langle \dfrac{\dd}{\dd t}\left(|u_\ell|^{\alpha-2} u_\ell\right), v \right\rangle_{W^{-1,p'}(\Omega), W_0^{1,p}(\Omega)} \psi(t) \dd t
			+\int_{0}^{T} \langle g_\ell(t), v \rangle_{W^{-1,p'}(\Omega), W_0^{1,p}(\Omega)} \psi(t) \dd t\\
			&=\int_{0}^{T} \int_{\Omega} \tilde{a}(t) v \dd x \psi(t) \dd t.
		\end{aligned}
	\end{equation}
	
	Let us rearrange the first term on the left-hand side of equation~\eqref{step:2} as follows:
	\begin{equation*}
		\begin{aligned}
			&\dfrac{1}{k} \sum_{n=0}^{N-1} k \int_{\Omega} \{|u_{\ell,k}^{n+1}|^{\alpha-2} u_{\ell,k}^{n+1} - |u_{\ell,k}^{n}|^{\alpha-2} u_{\ell,k}^{n}\} v \psi(nk) \dd x\\
			&=\int_{\Omega} \Bigg\{ \left[|u_{\ell,k}^{1}|^{\alpha-2} u_{\ell,k}^{1} -|u_0|^{\alpha-2}u_0\right] v \psi(0)\\
			&\qquad+ \left[|u_{\ell,k}^{2}|^{\alpha-2} u_{\ell,k}^{2} -|u_{\ell,k}^{1}|^{\alpha-2}u_{\ell,k}^{1}\right] v \psi(k)\\
			&\qquad + \dots \\
			&\qquad +\left[|u_{\ell,k}^{N-1}|^{\alpha-2} u_{\ell,k}^{N-1} -|u_{\ell,k}^{N-2}|^{\alpha-2}u_{\ell,k}^{N-2}\right] v \psi((N-1)k)\\
			&\qquad +\left[|u_{\ell,k}^{N}|^{\alpha-2} u_{\ell,k}^{N} -|u_{\ell,k}^{N-1}|^{\alpha-2}u_{\ell,k}^{N-1}\right] v \psi(T)
			\Bigg\} \dd x\\
			&=\int_{\Omega} -|u_0|^{\alpha-2} u_0 v \psi(0) \dd x\\
			&\quad+\int_{\Omega} \Bigg\{\left[-|u_{\ell,k}^1|^{\alpha-2} u_{\ell,k}^1 v (\psi(k)-\psi(0))\right]
			+\left[-|u_{\ell,k}^2|^{\alpha-2} u_{\ell,k}^2 v (\psi(2k)-\psi(k))\right]\\
			&\qquad+ \dots\\
			&\qquad +\left[-|u_{\ell,k}^{N-2}|^{\alpha-2} u_{\ell,k}^{N-2} v (\psi((N-1)k)-\psi((N-2)k))\right]\\
			&\qquad + \left[-|u_{\ell,k}^{N-1}|^{\alpha-2} u_{\ell,k}^{N-1} v (\psi(T)-\psi((N-1)k))\right]
			\Bigg\} \dd x\\
			&\quad+\int_{\Omega} |u_{\ell,k}^N|^{\alpha-2} u_{\ell,k}^N v \psi(T) \dd x\\
			&=-\sum_{n=0}^{N-1} k \int_{\Omega} |u_{\ell,k}^n|^{\alpha-2} u_{\ell,k}^n v \left[\dfrac{\psi(nk)-\psi((n-1)k)}{k}\right] \dd x\\
			&\quad+\int_{\Omega} |u_{\ell,k}^N|^{\alpha-2} u_{\ell,k}^N v \psi(T) \dd x-\int_{\Omega} |u_0|^{\alpha-2} u_0 v \psi(0) \dd x.
		\end{aligned}
	\end{equation*}
	
	Therefore, equation~\eqref{step:2} can be thoroughly re-arranged as follows:
	\begin{equation}
		\label{step:5}
		\begin{aligned}
			&\int_{\Omega} |u_{\ell,k}^N|^{\alpha-2} u_{\ell,k}^N v \psi(T) \dd x-\int_{\Omega} |u_0|^{\alpha-2} u_0 v \psi(0) \dd x\\
			&\quad-\sum_{n=0}^{N-1} k \int_{\Omega} |u_{\ell,k}^n|^{\alpha-2} u_{\ell,k}^n v \left[\dfrac{\psi(nk)-\psi((n-1)k)}{k}\right] \dd x\\
			&\quad +\sum_{n=0}^{N-1} k \int_{\Omega}\mu |\nabla u_{\ell,k}^{n+1}|^{p-2} \nabla u_{\ell,k}^{n+1} \cdot \nabla(\psi(nk) v)\dd x\\
			&\quad-\dfrac{1}{\ell} \sum_{n=0}^{N-1} k \int_{\Omega} \{u_{\ell,k}^{n+1}\}^{-} v \psi(nk) \dd x
			=\sum_{n=0}^{N-1} k \int_{\Omega}\left(\dfrac{1}{k} \int_{nk}^{(n+1)k} \tilde{a}(t) \dd t\right) v \psi(nk) \dd x.
		\end{aligned}
	\end{equation}
	
	Letting $k \to 0$ in~\eqref{step:5} and applying~\eqref{conv-proc}, we obtain that:
	\begin{equation}
		\label{step:6}
		\begin{aligned}
			&-\int_{0}^{T} \int_{\Omega} |u_\ell|^{\alpha-2} u_\ell v  \dd x \dfrac{\dd \psi}{\dd t}\dd t
			+\int_{0}^{T} \langle g_\ell(t), v \rangle_{W^{-1,p'}(\Omega), W_0^{1,p}(\Omega)} \psi(t) \dd t\\
			&\quad+\int_{0}^{T} \int_{\Omega} [\chi_\ell \psi(T)-|u_0|^{\alpha-2}u_0 \psi(0)] v \dd x = \int_{0}^{T} \int_{\Omega} \tilde{a}(t) v \dd x\psi(t)  \dd t.
		\end{aligned}
	\end{equation}
	
	Observe that an application of the Sobolev embedding theorem (cf., e.g., Theorem~6.6-1 of~\cite{PGCLNFAA}) gives $L^{\alpha'}(\Omega) \hookrightarrow W^{-1,p'}(\Omega)$. An integration by parts in~\eqref{step:4}, and an application of the continuity of $u_\ell$ established in Lemma~\ref{lem:3} give:
	\begin{equation}
		\label{step:7}
		\begin{aligned}
			&-\int_{0}^{T} \int_{\Omega} |u_\ell|^{\alpha-2} u_\ell v \dd x \dfrac{\dd \psi}{\dd t} \dd t
			+\langle|u_\ell(T)|^{\alpha-2} u_\ell(T), \psi(T) v\rangle_{W^{-1,p'}(\Omega), W_0^{1,p}(\Omega)}\\
			&\quad -\int_{\Omega} |u_\ell(0)|^{\alpha-2} u_\ell(0) \psi(0) v \dd x + \int_{0}^{T} \langle g_\ell(t),  v \rangle_{W^{-1,p'}(\Omega), W_0^{1,p}(\Omega)} \psi(t) \dd t\\
			&=\int_{0}^{T}\int_{\Omega} \tilde{a}(t) v \dd x \psi(t) \dd t.
		\end{aligned}
	\end{equation}
	
	Comparing equations~\eqref{step:6} and~\eqref{step:7} gives
	\begin{equation}
		\label{step:8}
		\begin{aligned}
			&\langle|u_\ell(T)|^{\alpha-2} u_\ell(T), \psi(T) v\rangle_{W^{-1,p'}(\Omega), W_0^{1,p}(\Omega)} -\int_{\Omega} |u_\ell(0)|^{\alpha-2} u_\ell(0) \psi(0) v \dd x\\
			&=\int_{0}^{T} \int_{\Omega} [\chi_\ell \psi(T)-|u_0|^{\alpha-2}u_0 \psi(0)] v \dd x
		\end{aligned}
	\end{equation}
	
	Since $\psi \in \mathcal{C}^1([0,T])$ is arbitrarily chosen, let us specialize $\psi$ in~\eqref{step:8} in a way such that $\psi(0)=0$. We obtain
	\begin{equation}
		\label{step:9}
		\langle|u_\ell(T)|^{\alpha-2} u_\ell(T) - \chi_\ell, \psi(T) v\rangle_{W^{-1,p'}(\Omega), W_0^{1,p}(\Omega)}=0,\quad\textup{ for all }v \in \mathcal{D}(\Omega).
	\end{equation}
	
	Since the duality in~\eqref{step:9} is continuous with respect to $v$, and since $\mathcal{D}(\Omega)$ is, by definition, dense in $W_0^{1,p}(\Omega)$, we immediately infer that:
	\begin{equation}
		\label{step:10}
		|u_\ell(T)|^{\alpha-2} u_\ell(T) = \chi_\ell \in L^{\alpha'}(\Omega).
	\end{equation}
	
	It is easy to observe that:
	\begin{equation*}
		|\chi_\ell|^{\alpha'-2} \chi_\ell = \left||u_\ell(T)|^{\alpha-2} u_\ell(T)\right|^{\alpha'-2} \chi_\ell =|u_\ell(T)|^{2-\alpha} \left[|u_\ell(T)|^{\alpha-2} u_\ell(T)\right]=u_\ell(T) \in L^\alpha(\Omega).
	\end{equation*}
	
	Let us now specialize $\psi$ in~\eqref{step:8} in a way such that $\psi(T)=0$. We obtain
	\begin{equation}
		\label{step:11}
		\int_{\Omega} \left(|u_\ell(0)|^{\alpha-2} u_\ell(0) - |u_0|^{\alpha-2}u_0 \right)\psi(0) v \dd x=0,\quad\textup{ for all }v \in \mathcal{D}(\Omega).
	\end{equation}
	
	Since the integration in~\eqref{step:11} is continuous with respect to $v$, and since $\mathcal{D}(\Omega)$ is, by definition, dense in $W_0^{1,p}(\Omega)$, we immediately infer that:
	\begin{equation*}
		|u_\ell(0)|^{\alpha-2} u_\ell(0) = |u_0|^{\alpha-2}u_0,
	\end{equation*}
	so that the injectivity of the monotone and hemi-continuous operator $\xi \mapsto |\xi|^{\alpha-2} \xi$ in turn implies that:
	\begin{equation}
		\label{step:12}
		u_\ell(0)=u_0 \in K.
	\end{equation}
	
	The last thing to check is that $g_\ell=B_\ell(u_\ell)$. For each $0 \le n \le N-1$, we multiply~\eqref{penalty:seq} by $k u_{\ell,k}^{n+1}$ and apply Lemma~\ref{lem:7}, thus getting
	\begin{equation*}
		\begin{aligned}
			&\dfrac{1}{\alpha'} \sum_{n=0}^{N-1} \left\{\left\||u_{\ell,k}^{n+1}|^{\alpha-2}u_{\ell,k}^{n+1}\right\|_{L^{\alpha'}(\Omega)}^{\alpha'} - \left\||u_{\ell,k}^{n}|^{\alpha-2}u_{\ell,k}^{n}\right\|_{L^{\alpha'}(\Omega)}^{\alpha'}\right\}\\
			&\quad+\sum_{n=0}^{N-1} k \left\langle B_\ell(u_{\ell,k}^{n+1}), u_{\ell,k}^{n+1} \right\rangle_{W^{-1,p'}(\Omega), W_0^{1,p}(\Omega)}\\
			&\le \sum_{n=0}^{N-1} k \int_{\Omega} \left(\dfrac{1}{k}\int_{nk}^{(n+1)k} \tilde{a}(t)\dd t\right) u_{\ell,k}^{n+1} \dd x,
		\end{aligned}
	\end{equation*}
	which in turn implies:
	\begin{equation}
		\label{step:13}
		\begin{aligned}
			&\dfrac{1}{\alpha'} \left\||u_{\ell,k}^{N}|^{\alpha-2}u_{\ell,k}^{N}\right\|_{L^{\alpha'}(\Omega)}^{\alpha'}\\
			&\quad+\int_{0}^{T} \left\langle B_\ell(\Pi_k \bm{u}_{\ell,k}), \Pi_k \bm{u}_{\ell,k} \right\rangle_{W^{-1,p'}(\Omega), W_0^{1,p}(\Omega)}\\
			&\le \int_{0}^{T} \int_{\Omega} \tilde{a}(t) \Pi_k \bm{u}_{\ell,k} \dd x \dd t
			+\dfrac{1}{\alpha'} \left\||u_0|^{\alpha-2}u_0\right\|_{L^{\alpha'}(\Omega)}^{\alpha'}.
		\end{aligned}
	\end{equation}
	
	Passing to the $\liminf$ as $k \to 0$ in~\eqref{step:13} and keeping in mind the convergence process~\eqref{conv-proc} as well as the identities~\eqref{step:10}--\eqref{step:12} gives, on the one hand:
	\begin{equation}
		\label{step:14}
		\begin{aligned}
			&\dfrac{1}{\alpha'} \|u_{\ell}(T)\|_{L^{\alpha}(\Omega)}^{\alpha}
			+\liminf_{k \to 0}\int_{0}^{T} \left\langle B_\ell(\Pi_k \bm{u}_{\ell,k}), \Pi_k \bm{u}_{\ell,k} \right\rangle_{W^{-1,p'}(\Omega), W_0^{1,p}(\Omega)} \dd t\\
			&=\dfrac{1}{\alpha'} \left\||u_{\ell}(T)|^{\alpha-2}u_{\ell}(T)\right\|_{L^{\alpha'}(\Omega)}^{\alpha'}
			+\liminf_{k \to 0}\int_{0}^{T} \left\langle B_\ell(\Pi_k \bm{u}_{\ell,k}), \Pi_k \bm{u}_{\ell,k} \right\rangle_{W^{-1,p'}(\Omega), W_0^{1,p}(\Omega)} \dd t\\
			&\le \int_{0}^{T} \int_{\Omega} \tilde{a}(t) u_\ell \dd x \dd t+
			\dfrac{1}{\alpha'} \left\||u_0|^{\alpha-2}u_0\right\|_{L^{\alpha'}(\Omega)}^{\alpha'}=\int_{0}^{T} \int_{\Omega} \tilde{a}(t) v \dd x \psi(t) \dd t+\dfrac{1}{\alpha'}\|u_0\|_{L^\alpha(\Omega)}^\alpha.
		\end{aligned}
	\end{equation}
	
	On the other hand, the specializations $v=u_\ell$ and $\psi\equiv 1$ in~\eqref{step:4}, and an application of Lemma~\ref{lem:8} give:
	\begin{equation}
		\label{step:15}
		\begin{aligned}
			&\dfrac{1}{\alpha'}\|u_\ell(T)\|_{L^\alpha(\Omega)}^\alpha +\int_{0}^{T} \langle g_\ell(t), u_\ell(t) \rangle_{W^{-1,p'}(\Omega), W_0^{1,p}(\Omega)} \psi(t) \dd t\\
			&=\int_{0}^{T} \int_{\Omega} \tilde{a}(t) u_\ell(t) \dd x \psi(t) \dd t+\dfrac{1}{\alpha'}\|u_0\|_{L^\alpha(\Omega)}^\alpha.
		\end{aligned}
	\end{equation}
	
	Combining~\eqref{step:14} and~\eqref{step:15} gives:
	\begin{equation}
		\label{step:16}
		\liminf_{k \to 0}\int_{0}^{T} \left\langle B_\ell(\Pi_k \bm{u}_{\ell,k}), \Pi_k \bm{u}_{\ell,k} \right\rangle_{W^{-1,p'}(\Omega), W_0^{1,p}(\Omega)}
		\le \int_{0}^{T} \langle g_\ell(t), u_\ell(t) \rangle_{W^{-1,p'}(\Omega), W_0^{1,p}(\Omega)} \psi(t) \dd t,
	\end{equation}
	
	We now exploit the method of Minty, known as Minty's trick~\cite{Minty1962}. Let $w\in L^p(0,T;W_0^{1,p}(\Omega))$. An application of the strict monotonicity of the operator $B_\ell$ defined in~\eqref{Bkappa} and~\eqref{step:16} gives:
	\begin{equation}
		\label{step:17}
		\begin{aligned}
			&\int_{0}^{T} \langle g_\ell-B_\ell w, u_\ell-w\rangle_{W^{-1,p'}(\Omega), W_0^{1,p}(\Omega)} \dd t\\
			&\ge \liminf_{k \to 0} \int_{0}^{T} \langle B_\ell (\Pi_k \bm{u}_{\ell,k})-B_\ell w, \Pi_k \bm{u}_{\ell,k}-w \rangle_{W^{-1,p'}(\Omega), W_0^{1,p}(\Omega)} \dd t \ge 0.
		\end{aligned}
	\end{equation}
	
	Let $\lambda>0$ and specialize $w=u_\ell -\lambda v$ in~\eqref{step:17}, where $v$ is arbitrarily chosen in $L^p(0,T;W_0^{1,p}(\Omega))$. We obtain that:
	\begin{equation}
		\label{step:18}
		\int_{0}^{T} \langle g_\ell-B_\ell(u_\ell -\lambda v)), \lambda v \rangle_{W^{-1,p'}(\Omega), W_0^{1,p}(\Omega)} \dd t \ge 0.
	\end{equation}
	
	Dividing~\eqref{step:18} by $\lambda >0$ and letting $\lambda \to 0^+$ gives:
	\begin{equation}
		\label{step:19}
		\int_{0}^{T} \langle g_\ell-B_\ell (u_\ell), v \rangle_{W^{-1,p'}(\Omega), W_0^{1,p}(\Omega)} \dd t \ge 0.
	\end{equation}
	
	Since $v\in L^p(0,T;W_0^{1,p}(\Omega))$ is arbitrary, we obtain that:
	\begin{equation}
		\label{step:20}
		g_\ell = B_\ell (u_\ell) \in L^\infty(0,T;W^{-1,p'}(\Omega)),
	\end{equation}
	which finally implies that $u_\ell$ is a solution of Problem~\ref{Pkappa}. This completes the proof.
\end{proof}

The next step, which constitutes the final step of the existence result we want to prove, is the passage to the limit as $\ell \to 0^+$ as well as the recovery of the actual model governing the variation of shallow ice sheets in time.

In all what follows, we exploit the $p$-norm in $\mathbb{R}^2$, namely,
\begin{equation*}
|x|:=\left(|x_1|^p+|x_2|^p\right)^{1/p},\quad\textup{ for all }x=(x_1,x_2) \in \mathbb{R}^2.
\end{equation*}

The estimates~\eqref{est:1}, \eqref{est:3:1}, \eqref{est:5}, \eqref{est:5:1} and Theorem~\ref{thm:3} imply that there exists a constant $C>0$ independent of $\ell$ such that
\begin{equation}
	\label{boundukappa}
	\begin{aligned}
		\|u_\ell\|_{L^\infty(0,T;W_0^{1,p}(\Omega))} &\le C,\\
		\left\||u_\ell|^\frac{\alpha-2}{2}u_\ell\right\|_{L^\infty(0,T;L^2(\Omega))} &\le C,\\
		\left\|\dfrac{\dd}{\dd t}(|u_\ell|^\frac{\alpha-2}{2} u_\ell)\right\|_{L^2(0,T;L^2(\Omega))} &\le C,\\
		\left\||u_\ell|^{\alpha-2} u_\ell\right\|_{L^\infty(0,T;L^{\alpha'}(\Omega))} &\le C,\\
		\left\|\dfrac{\dd}{\dd t}(|u_\ell|^{\alpha-2}u_\ell)\right\|_{L^\infty(0,T;W^{-1,p'}(\Omega))} &\le C\left(1+\dfrac{1}{\ell}\right).
	\end{aligned}
\end{equation}

By the Banach-Alaoglu-Bourbaki theorem (cf., e.g., Theorem~3.6 of~\cite{Brez11}) we infer that, up to passing to a subsequence still denoted by $\{u_\ell\}_{\ell>0}$, the following convergences hold:
\begin{equation}
	\label{conv-proc-kappa}
	\begin{aligned}
		u_\ell \wsc u, &\textup{ in } L^\infty(0,T;W_0^{1,p}(\Omega)),\\
		|u_\ell|^\frac{\alpha-2}{2} u_\ell \wsc v, &\textup{ in } L^\infty(0,T;L^2(\Omega)),\\
		\dfrac{\dd}{\dd t}\left(|u_\ell|^\frac{\alpha-2}{2} u_\ell\right) \rightharpoonup \dfrac{\dd v}{\dd t}, &\textup{ in } L^2(0,T;L^2(\Omega)),\\
		|u_\ell|^{\alpha-2} u_\ell \wsc w, &\textup{ in } L^\infty(0,T;L^{\alpha'}(\Omega)).
	\end{aligned}
\end{equation}

Observe that the first convergence of~\eqref{conv-proc-kappa}, namely $u_\ell \wsc u$ in $L^\infty(0,T;W_0^{1,p}(\Omega))$, implies that
\begin{equation*}
	u_\ell \rightharpoonup u,\quad\textup{ in }L^2(0,T;L^2(\Omega)) \simeq L^2((0,T)\times \Omega) \textup{ as }\ell \to 0^+.
\end{equation*}

The third estimate in~\eqref{est:penalty} imply that $-\{u_\ell\}^{-} \to 0$ in $L^2(0,T;L^2(\Omega))$. 
Therefore, the continuity and the monotonicity of the negative part established in Lemma~\ref{lem:1} allow us to apply Theorem 9.13-2 of~\cite{PGCLNFAA}, getting:
\begin{equation}
	\label{sign}
	\{u\}^{-} =0 \quad \textup{ in } L^2(0,T;L^2(\Omega)),
\end{equation}
which means that $u(t) \ge 0$ a.e. in $\Omega$, for a.e. $t \in (0,T)$.

Using the same compactness argument as in Theorem~\ref{thm:3}, an application of Dubinskii's theorem (Lemma~\ref{Dub}) with $A_0:=L^\frac{2p}{\alpha}(\Omega)$, $A_1:=L^2(\Omega)$, $q_0=\infty$ and $q_1=2$ gives that:
\begin{equation*}
	\left\{|u_\ell|^\frac{\alpha-2}{2}u_\ell\right\}_{\ell>0} \textup{ strongly converges in } L^\infty(0,T;L^\frac{2p}{\alpha}(\Omega)).
\end{equation*}

Moreover, by Lemma~\ref{Dub:lemma}(a), it can be established that:
\begin{equation}
	\label{Dub:strong}
	\left\{|u_\ell|^\frac{\alpha-2}{2}u_\ell\right\}_{\ell>0} \textup{ strongly converges in } \mathcal{C}^0([0,T];L^2(\Omega)).
\end{equation}

The same monotonicity argument as in Theorem~\ref{thm:3} and the third convergence in~\eqref{conv-proc-kappa} in turn imply that the limit $v$ in~\eqref{Dub:strong} takes the following form:
\begin{equation*}
	v=|u|^\frac{\alpha-2}{2} u \in H^1(0,T;L^2(\Omega)).
\end{equation*}

The first convergence in~\eqref{conv-proc-kappa} gives that $u_\ell \rightharpoonup u$ in $L^2(0,T;W_0^{1,p}(\Omega))$. Let us now show that:
\begin{equation}
	\label{conv-abs}
	|u_\ell| \rightharpoonup u,\quad\textup{ in }L^2(0,T;W_0^{1,p}(\Omega)).
\end{equation}

Note that, thanks to Stampacchia's theorem~\cite{Stampacchia1965}, the announced convergence \emph{a priori} makes sense.
To prove that this convergence holds, fix any $v \in L^2(0,T;W^{-1,p'}(\Omega))$ and observe that:
\begin{equation*}
	\begin{aligned}
		&\int_{0}^{T} \left\langle v, |u_\ell|-u\right\rangle_{W^{-1,p'}(\Omega), W_0^{1,p}(\Omega)}\dd t
		=\int_{0}^{T} \left\langle v, \{u_\ell\}^{+}+\{u_\ell\}^{-}-u\right\rangle_{W^{-1,p'}(\Omega), W_0^{1,p}(\Omega)}\dd t\\
		&=\int_{0}^{T} \left\langle v, (\{u_\ell\}^{+}-\{u_\ell\}^{-})+2\{u_\ell\}^{-}-u\right\rangle_{W^{-1,p'}(\Omega), W_0^{1,p}(\Omega)}\dd t\\
		&=\int_{0}^{T} \left\langle v, u_\ell -u\right\rangle_{W^{-1,p'}(\Omega), W_0^{1,p}(\Omega)}\dd t
		+2\int_{0}^{T} \left\langle v,\{u_\ell\}^{-}\right\rangle_{W^{-1,p'}(\Omega), W_0^{1,p}(\Omega)}\dd t\\
		& \to 0, \quad\textup{ as }\ell \to 0^+,
	\end{aligned}
\end{equation*}
since the first term converges to zero by the first convergence in~\eqref{conv-proc-kappa} and the second term converges to zero thanks to~\eqref{sign}. The convergence~\eqref{conv-abs} is thus established.
An application of Lemma~\ref{lem:5} to the convergence~\eqref{Dub:strong} gives that:
\begin{equation*}
	\left\{|u_\ell|\right\}_{\ell>0} \textup{ strongly converges in } \mathcal{C}^0([0,T];L^\alpha(\Omega)),
\end{equation*}
and, so, it also strongly converges in $L^2(0,T;L^\alpha(\Omega))$. Combining this with the convergences~\eqref{sign} and~\eqref{conv-abs} gives that:
\begin{equation}
	\label{cont}
	|u_\ell| \to u,\quad\textup{ in } \mathcal{C}^0([0,T];L^\alpha(\Omega)).
\end{equation}

Define the linear and continuous operator $L_0:\mathcal{C}^0([0,T];L^\alpha(\Omega)) \to L^\alpha(\Omega)$ by:
\begin{equation*}
	L_0(v):=v(0),\quad\textup{ for all }v \in \mathcal{C}^0([0,T];L^\alpha(\Omega)).
\end{equation*}

By~\eqref{cont} and the continuity of $L_0$, we have that $|u_\ell(0)| \to u(0)$ in $L^\alpha(\Omega)$. However, since $u_\ell(0)=u_0 \ge 0$ for all $\ell>0$, we immediately deduce that $u(0)=u_0 \in K$ and so that the weak-star limit $u$ satisfies the expected initial condition.

Note that the variational equation in Problem~\ref{Pkappa} takes the following equivalent form:
\begin{equation}
	\label{eq:1}
	\dfrac{\dd}{\dd t}(|u_\ell(t)|^{\alpha-2} u_\ell(t)) -\nabla \cdot \left(\mu |\nabla u_\ell(t)|^{p-2} \nabla u_\ell(t)\right) -\dfrac{1}{\ell} \{u_\ell(t)\}^{-}=\tilde{a}(t),\quad \textup{ in }W^{-1,p'}(\Omega).
\end{equation}

Integrating~\eqref{eq:1} in $(0,T)$ gives:
\begin{equation*}
	\begin{aligned}
		&-\dfrac{1}{\ell}\int_{0}^{T} \{u_\ell(t)\}^{-} \dd t\\
		&=\int_{0}^{T} \tilde{a}(t) \dd t + \int_{0}^{T} \nabla \cdot \left(\mu |\nabla u_\ell(t)|^{p-2}\nabla u_\ell(t)\right) \dd t -\int_{0}^{T} \dfrac{\dd}{\dd t}(|u_\ell(t)|^{\alpha-2} u_\ell(t)) \dd t\\
		&=\int_{0}^{T} \tilde{a}(t) \dd t + \int_{0}^{T} \nabla \cdot \left(\mu |\nabla u_\ell(t)|^{p-2}\nabla u_\ell(t)\right) \dd t
		-\left[|u_\ell(T)|^{\alpha-2} u_\ell(T)-|u_0|^{\alpha-2} u_0\right]\\
		&=\int_{0}^{T} \tilde{a}(t) \dd t + \int_{0}^{T} \nabla \cdot \left(\mu |\nabla u_\ell(t)|^{p-2}\nabla u_\ell(t)\right) \dd t
		-\left[|u_\ell(T)|^{\alpha-2} u_\ell(T)-u_0^{\alpha-1}\right],\quad\textup{ in }W^{-1,p'}(\Omega),
	\end{aligned}
\end{equation*}
where the last equality holds since $u_0 \in K$ by assumption.

Let $v \in W_0^{1,p}(\Omega)$ be arbitrarily chosen satisfying $\|v\|_{W_0^{1,p}(\Omega)}=1$. By Stampacchia's theorem (cf. the seminal article~\cite{Stampacchia1965}) we have that $|v| \in W_0^{1,p}(\Omega)$ as well. We then have:
\begin{equation*}
	\begin{aligned}
		&\dfrac{1}{\ell}\left|\left\langle\int_{0}^{T}\{u_\ell(t)\}^{-}\dd t,v\right\rangle_{W^{-1,p'}(\Omega), W_0^{1,p}(\Omega)}\right|
		\le\left|\left\langle\int_{0}^{T} \tilde{a}(t) \dd t, v\right\rangle_{W^{-1,p'}(\Omega), W_0^{1,p}(\Omega)}\right|\\
		&\quad+\left|\left\langle\int_{0}^{T} \nabla\cdot\left(\mu |\nabla u_\ell(t)|^{p-2} \nabla u_\ell(t)\right)\dd t,v\right\rangle_{W^{-1,p'}(\Omega), W_0^{1,p}(\Omega)}\right|\\
		&\quad+\left|\int_{\Omega} \underbrace{|u_\ell(T)|^{\alpha-2} u_\ell(T)}_{\in L^{\alpha'}(\Omega) \textup{ by }\eqref{step:10}} v \dd x\right|
		+\int_{\Omega} u_0^{\alpha-1} |v| \dd x\\
		&\le \int_{0}^{T} \|\tilde{a}(t)\|_{W^{-1,p'}(\Omega)} \|v\|_{W_0^{1,p}(\Omega)}\dd t
		+T\left\|\nabla \cdot\left(\mu |\nabla u_\ell|^{p-2} \nabla u_\ell\right)\right\|_{L^\infty(0,T;W^{-1,p'}(\Omega))} \|v\|_{W_0^{1,p}(\Omega)}\\
		&\quad+\left\||u_\ell(T)|^{\alpha-2} u_\ell(T)\right\|_{L^{\alpha'}(\Omega)} \|v\|_{L^\alpha(\Omega)} +\|u_0^{\alpha-1}\|_{L^{\alpha'}(\Omega)} \|v\|_{L^\alpha(\Omega)}\\
		&\le \|\tilde{a}\|_{W^{1,p}(0,T;\mathcal{C}^0(\overline{\Omega}))}
		+T\left\|\nabla \cdot\left(\mu |\nabla u_\ell|^{p-2} \nabla u_\ell\right)\right\|_{L^\infty(0,T;W^{-1,p'}(\Omega))}
		+\|u_\ell(T)\|_{L^\alpha(\Omega)}^{\alpha-1} + \|u_0\|_{L^\alpha(\Omega)}^{\alpha-1}.
	\end{aligned}
\end{equation*}

Thanks to Lemma~\ref{lem:5}, the boundedness of $\mu$, and the boundedness of the $p$-Laplacian, the latter term is bounded independently of $\ell$. By the arbitrariness of $v\in W_0^{1,p}(\Omega)$ with $\|v\|_{W_0^{1,p}(\Omega)}$, we deduce that:
\begin{equation*}
	\sup_{\substack{v \in W_0^{1,p}(\Omega)\\ \|v\|_{W_0^{1,p}(\Omega)}=1}} \dfrac{1}{\ell}\left|\left\langle\int_{0}^{T}\{u_\ell(t)\}^{-}\dd t,v\right\rangle_{W^{-1,p'}(\Omega), W_0^{1,p}(\Omega)}\right| \le C,\quad\textup{ for all }\ell>0,
\end{equation*}
for some $C>0$ independent of $\ell$ or, equivalently,
\begin{equation}
	\label{eq:2}
	\left\|\dfrac{1}{\ell}\int_{0}^{T} \{u_\ell(t)\}^{-} \dd t\right\|_{W^{-1,p'}(\Omega)} \le C,\quad\textup{ for all }\ell>0,
\end{equation}
for some $C>0$ independent of $\ell$. By Bochner's theorem (cf., e.g., Theorem~8.9 of~\cite{Leoni2017}), we have that the following inequality is always true:
\begin{equation*}
	\left\|\dfrac{1}{\ell}\int_{0}^{T} \{u_\ell(t)\}^{-} \dd t\right\|_{W^{-1,p'}(\Omega)} \le \dfrac{1}{\ell}\int_{0}^{T} \|\{u_\ell(t)\}^{-}\|_{W^{-1,p'}(\Omega)} \dd t.
\end{equation*}

We now show that the inverse inequality holds true too. Observe that, by Stampacchia's theorem, we have that $\{u_\ell(t)\}^{-} \in W_0^{1,p}(\Omega)$, so that $\{u_\ell(t)\}^{-} \ge 0$ in $\overline{\Omega}$, for a.e. $t\in (0,T)$. Therefore, we have:
\begin{equation*}
	\langle\{u_\ell(t)\}^{-},v\rangle_{W^{-1,p'}(\Omega), W_0^{1,p}(\Omega)}=\int_{\Omega} \{u_\ell(t)\}^{-} v\dd x \le \int_{\Omega} \{u_\ell(t)\}^{-} |v|\dd x, \quad\textup{ for all }v \in W_0^{1,p}(\Omega),
\end{equation*}
for a.e. $t \in (0,T)$. Therefore, for a.e. $t\in (0,T)$, the supremum
\begin{equation*}
	\sup_{\substack{v \in W_0^{1,p}(\Omega)\\ \|v\|_{W_0^{1,p}(\Omega)}=1}} \left|\langle\{u_\ell(t)\}^{-},v\rangle_{W^{-1,p'}(\Omega), W_0^{1,p}(\Omega)}\right|
\end{equation*}
is attained for functions $v\in W_0^{1,p}(\Omega)$ with unitary norm that are either greater or equal than zero in $\overline{\Omega}$, or less or equal than zero in $\overline{\Omega}$. In view of this remark, we have that:
\begin{equation}
	\label{key-1}
	\begin{aligned}
		&\sup_{\substack{v \in W_0^{1,p}(\Omega)\\ \|v\|_{W_0^{1,p}(\Omega)}=1}} \int_{0}^{T} \left|\left\langle\dfrac{\{u_\ell(t)\}^{-}}{\ell},v \right\rangle_{W^{-1,p'}(\Omega), W_0^{1,p}(\Omega)}\right| \dd t\\
		&=\sup_{\substack{v \in W_0^{1,p}(\Omega)\\ \|v\|_{W_0^{1,p}(\Omega)}=1\\v \ge 0 \textup{ in }\overline{\Omega}}}\int_{0}^{T} \left\langle\dfrac{\{u_\ell(t)\}^{-}}{\ell},v\right\rangle_{W^{-1,p'}(\Omega), W_0^{1,p}(\Omega)} \dd t\\
		&\le \sup_{\substack{v \in W_0^{1,p}(\Omega)\\ \|v\|_{W_0^{1,p}(\Omega)}=1}}
		\left|\int_{0}^{T} \left\langle\dfrac{\{u_\ell(t)\}^{-}}{\ell},v\right\rangle_{W^{-1,p'}(\Omega), W_0^{1,p}(\Omega)} \dd t\right|\\
		&\le \sup_{\substack{v \in W_0^{1,p}(\Omega)\\ \|v\|_{W_0^{1,p}(\Omega)}=1}} \int_{0}^{T} \left|\left\langle\dfrac{\{u_\ell(t)\}^{-}}{\ell},v \right\rangle_{W^{-1,p'}(\Omega), W_0^{1,p}(\Omega)}\right| \dd t,
	\end{aligned}
\end{equation}
so that the last two inequalities in~\eqref{key-1} are, actually, equalities.

Let us now show that:
\begin{equation}
	\label{eq:3}
	\begin{aligned}
		&\int_{0}^{T} \left(\sup_{\substack{v \in W_0^{1,p}(\Omega)\\ \|v\|_{W_0^{1,p}(\Omega)}=1}}\left|\left\langle \dfrac{\{u_\ell(t)\}^{-}}{\ell}, v \right\rangle_{W^{-1,p'}(\Omega), W_0^{1,p}(\Omega)}\right|\right) \dd t\\
		&\le \sup_{\substack{v \in W_0^{1,p}(\Omega)\\ \|v\|_{W_0^{1,p}(\Omega)}=1}} \int_{0}^{T} \left|\left\langle\dfrac{\{u_\ell(t)\}^{-}}{\ell},v\right\rangle_{W^{-1,p'}(\Omega), W_0^{1,p}(\Omega)}\right| \dd t.
	\end{aligned}
\end{equation}

Let $\{v_j\}_{j=1}^\infty \subset W_0^{1,p}(\Omega)$, $\|v_k\|_{W_0^{1,p}(\Omega)}=1$ be such that the following convergence in $\mathbb{R}$ holds
\begin{equation}
	\label{eq:4}
	\lim_{j\to\infty}\left|\left\langle\dfrac{\{u_\ell(t)\}^{-}}{\ell},v_j\right\rangle_{W^{-1,p'}(\Omega), W_0^{1,p}(\Omega)}\right|
	=
	\sup_{\substack{v \in W_0^{1,p}(\Omega)\\ \|v\|_{W_0^{1,p}(\Omega)}=1}} \left|\left\langle\dfrac{\{u_\ell(t)\}^{-}}{\ell},v\right\rangle_{W^{-1,p'}(\Omega), W_0^{1,p}(\Omega)}\right|,
\end{equation}
for a.e. $t \in (0,T)$. Observe that, for each $j \ge 1$, we always have:
\begin{equation}
	\label{eq:5}
	\begin{aligned}
		&\int_{0}^{T} \left|\left\langle\dfrac{\{u_\ell(t)\}^{-}}{\ell},v_j\right\rangle_{W^{-1,p'}(\Omega), W_0^{1,p}(\Omega)}\right|\dd t\\
		&\le \sup_{\substack{v \in W_0^{1,p}(\Omega)\\ \|v\|_{W_0^{1,p}(\Omega)}=1}} \int_{0}^{T}\left|\left\langle\dfrac{\{u_\ell(t)\}^{-}}{\ell},v\right\rangle_{W^{-1,p'}(\Omega), W_0^{1,p}(\Omega)}\right| \dd t.
	\end{aligned}
\end{equation}

Since the convergence in~\eqref{eq:4} holds for a.e. $t \in (0,T)$, we are in a position to apply Fatou's lemma (cf., e.g., \cite{Royden1988}). A subsequent application of~\eqref{eq:5} gives:
\begin{equation*}
	\begin{aligned}
		&\int_{0}^{T} \sup_{\substack{v \in W_0^{1,p}(\Omega)\\ \|v\|_{W_0^{1,p}(\Omega)}=1}} \left|\left\langle\dfrac{\{u_\ell(t)\}^{-}}{\ell},v\right\rangle_{W^{-1,p'}(\Omega), W_0^{1,p}(\Omega)}\right| \dd t\\
		&\le \liminf_{j \to\infty} \int_{0}^{T} \left|\left\langle\dfrac{\{u_\ell(t)\}^{-}}{\ell},v_j\right\rangle_{W^{-1,p'}(\Omega), W_0^{1,p}(\Omega)}\right| \dd t\\
		&\le \sup_{\substack{v \in W_0^{1,p}(\Omega)\\ \|v\|_{W_0^{1,p}(\Omega)}=1}} \int_{0}^{T} \left|\left\langle\dfrac{\{u_\ell(t)\}^{-}}{\ell},v\right\rangle_{W^{-1,p'}(\Omega), W_0^{1,p}(\Omega)}\right| \dd t,
	\end{aligned}
\end{equation*}
and this proves~\eqref{eq:3}. A a result of~\eqref{key-1}, \eqref{eq:3} and the properties of the Bochner integral (cf., e.g., Theorem~8.13 of~\cite{Leoni2017}), we have:
\begin{equation*}
	\begin{aligned}
		&\int_{0}^{T} \left\|\dfrac{\{u_\ell(t)\}^{-}}{\ell}\right\|_{W^{-1,p'}(\Omega)}\dd t
		=\int_{0}^{T} \sup_{\substack{v \in W_0^{1,p}(\Omega)\\ \|v\|_{W_0^{1,p}(\Omega)}=1}} \left|\left\langle \dfrac{\{u_\ell(t)\}^{-}}{\ell},v\right\rangle_{W^{-1,p'}(\Omega), W_0^{1,p}(\Omega)}\right|\dd t\\
		&\le \sup_{\substack{v \in W_0^{1,p}(\Omega)\\ \|v\|_{W_0^{1,p}(\Omega)}=1}} \int_{0}^{T} \left|\left\langle\dfrac{\{u_\ell(t)\}^{-}}{\ell},v\right\rangle_{W^{-1,p'}(\Omega), W_0^{1,p}(\Omega)}\right| \dd t\\
		&=\sup_{\substack{v \in W_0^{1,p}(\Omega)\\ \|v\|_{W_0^{1,p}(\Omega)}=1}} \left|\left\langle\int_{0}^{T}\dfrac{\{u_\ell(t)\}^{-}}{\ell} \dd t,v\right\rangle_{W^{-1,p'}(\Omega), W_0^{1,p}(\Omega)}\right|
		=\left\|\int_{0}^{T}\dfrac{\{u_\ell(t)\}^{-}}{\ell} \dd t\right\|_{W^{-1,p'}(\Omega)}.
	\end{aligned}
\end{equation*}

In conclusion, we have shown that:
\begin{equation*}
	\left\|\dfrac{1}{\ell}\int_{0}^{T} \{u_\ell(t)\}^{-} \dd t\right\|_{W^{-1,p'}(\Omega)} \le \dfrac{1}{\ell}\int_{0}^{T} \|\{u_\ell(t)\}^{-}\|_{W^{-1,p'}(\Omega)} \dd t.
\end{equation*}

Combining the latter with Bochner's theorem (cf., e.g., Theorem 8.13 of~\cite{Leoni2017}), we obtain:
\begin{equation}
	\label{eq:6}
	\left\|\dfrac{1}{\ell}\int_{0}^{T} \{u_\ell(t)\}^{-} \dd t\right\|_{W^{-1,p'}(\Omega)} = \dfrac{1}{\ell}\int_{0}^{T} \|\{u_\ell(t)\}^{-}\|_{W^{-1,p'}(\Omega)} \dd t.
\end{equation}

Combining~\eqref{eq:2} and~\eqref{eq:6}, we obtain
\begin{equation*}
	\dfrac{1}{\ell}\int_{0}^{T} \left\|\{u_\ell(t)\}^{-}\right\|_{W^{-1,p'}(\Omega)} \dd t \le C,\quad\textup{ for all }\ell>0,
\end{equation*}
for some $C>0$ independent of $\ell$ or, equivalently, that
\begin{equation}
	\label{eq:7}
	\left\{\dfrac{\{u_\ell\}^{-}}{\ell}\right\}_{\ell>0} \textup{ is bounded in } L^1(0,T;W^{-1,p'}(\Omega)) \textup{ independently of }\ell.
\end{equation}

Therefore, we can derive the following estimate plugging~\eqref{eq:7} into~\eqref{eq:1}:
\begin{equation}
	\label{eq:8}
	\begin{aligned}
		&\sup_{\substack{v \in W_0^{1,p}(\Omega)\\ \|v\|_{W_0^{1,p}(\Omega)}=1}} \left|\left\langle \dfrac{\dd}{\dd t}\left(|u_\ell|^{\alpha-2} u_\ell\right)(t), v \right\rangle_{W^{-1,p'}(\Omega), W_0^{1,p}(\Omega)}\right|\\
		&\le \sup_{\substack{v \in W_0^{1,p}(\Omega)\\ \|v\|_{W_0^{1,p}(\Omega)}=1}} \left|\left\langle\mu |\nabla u_\ell(t)|^{p-2} \nabla u_\ell(t),v\right\rangle_{W^{-1,p'}(\Omega), W_0^{1,p}(\Omega)}\right|\\
		&\quad +\dfrac{1}{\ell}\|\{u_\ell(t)\}^{-}\|_{W^{-1,p'}(\Omega)} +\|\tilde{a}(t)\|_{W^{-1,p'}(\Omega)}, \textup{ for a.e. }t\in (0,T).
	\end{aligned}
\end{equation}

Applying the boundedness of $\mu$ assumed in~($H$\ref{H2}), the boundedness of $\{u_\ell\}_{\ell>0}$ established in~\eqref{conv-proc-kappa}, the boundedness of $\tilde{a}$ assumed in~($H$\ref{H4}), the boundedness of the $p$-Laplacian, \eqref{eq:7} and the monotonicity of the integral to~\eqref{eq:8} give:
\begin{equation}
	\label{eq:9}
	\int_{0}^{T}\sup_{\substack{v \in W_0^{1,p}(\Omega)\\ \|v\|_{W_0^{1,p}(\Omega)}=1}} \left|\left\langle \dfrac{\dd}{\dd t}\left(|u_\ell|^{\alpha-2} u_\ell\right)(t), v \right\rangle_{W^{-1,p'}(\Omega), W_0^{1,p}(\Omega)}\right|\dd t \le C,
\end{equation}
for some $C>0$ independent of $\ell$. This means that
\begin{equation*}
	\left\{\dfrac{\dd}{\dd t}\left(|u_\ell|^{\alpha-2} u_\ell\right)\right\}_{\ell>0} \textup{ is bounded in } L^1(0,T;W^{-1,p'}(\Omega)).
\end{equation*}

Therefore, by the Dinculeanu-Zinger theorem (Lemma~\ref{ds}), there exists a vector-valued measure $\tilde{w}_t \in \mathcal{M}([0,T];W^{-1,p'}(\Omega))$ such that:
\begin{equation}
	\label{measure-conv}
	\dfrac{\dd}{\dd t}\left(|u_\ell|^{\alpha-2} u_\ell\right) \wsc \tilde{w}_t,\quad\textup{ in } \mathcal{M}([0,T];W^{-1,p'}(\Omega)) \textup{ as }\ell \to 0^+.
\end{equation}

It can be observed that the vector-valued measure $\tilde{w}_t$ appearing in~\eqref{measure-conv} can be interpreted as
\begin{equation*}
	\dfrac{\dd}{\dd t}\left(|u|^{\alpha-2} u\right).
\end{equation*}

To see this, let $v \in W_0^{1,p}(\Omega)$ and let $\varphi \in \mathcal{D}(0,T)$. By the fourth convergence in~\eqref{conv-proc-kappa} and an application of Proposition~2.2.34 of~\cite{GasPap06}, we have that:
\begin{equation}
	\label{key-2}
	\begin{aligned}
		&\langle\langle \tilde{w}_t , \varphi v \rangle\rangle_{\mathcal{M}([0,T];W^{-1,p'}(\Omega)), \mathcal{C}^0([0,T];W_0^{1,p}(\Omega))}\\
		&=\lim_{\ell \to0^+}\int_{0}^{T} \left\langle \dfrac{\dd}{\dd t}\left(|u_\ell|^{\alpha-2} u_\ell\right) , v \right\rangle_{W^{-1,p'}(\Omega), W_0^{1,p}(\Omega)} \varphi(t) \dd t\\
		&=-\lim_{\ell \to0^+} \int_{0}^{T} \int_{\Omega} \left(|u_\ell|^{\alpha-2} u_\ell\right) v\dd x\varphi'(t)\dd t\\
		&=-\int_{0}^{T} \int_{\Omega} \left(|u|^{\alpha-2} u\right) v\varphi'(t)\dd x\dd t.
	\end{aligned}
\end{equation}

Keeping the convergence~\eqref{measure-conv} in mind, as well as the fourth convergence in~\eqref{conv-proc-kappa}, we are in position to carry out the same compactness argument as in Theorem~\ref{thm:3} in order to apply Dubinskii's theorem (Lemma~\ref{Dub}) with $A_0=L^{(\alpha'-1)p}(\Omega)$, $A_1=W^{-1,p'}(\Omega)$, $q_0=(\alpha'-1)p>1$ and $q_1=1$. The monotonicity of the mapping $\xi \mapsto |\xi|^{\alpha-2}\xi$ finally gives us:
\begin{equation}
	\label{eq:10}
	|u_\ell|^{\alpha-2} u_\ell \to w=|u|^{\alpha-2} u, \quad \textup{ in } L^{(\alpha'-1)p}(0,T;L^{(\alpha'-1)p}(\Omega)),
\end{equation}
and $w=|u|^{\alpha-2} u \in L^\infty(0,T;L^{\alpha'}(\Omega))$ by the fourth convergence in~\eqref{conv-proc-kappa}.

Thanks to Lemma~\ref{lem:6} and the Dinculeanu-Zinger theorem (Lemma~\ref{ds}), we have that the following chain of embeddings holds:
\begin{equation*}
	L^1(0,T;W^{-1,p'}(\Omega)) \hookrightarrow \left(\mathcal{C}^0([0,T];W_0^{1,p}(\Omega))\right)^\ast \simeq \mathcal{M}([0,T];W^{-1,p'}(\Omega)).
\end{equation*}

Thanks to Lemma~\ref{lem:5} and Lemma~\ref{lem:8}, we have:
\begin{equation}
	\label{eq:11}
	\begin{aligned}
		&\int_{0}^{T} \left\langle\dfrac{\dd}{\dd t}\left(|u_\ell(t)|^{\alpha-2} u_\ell(t)\right),u_\ell(t)\right\rangle_{W^{-1,p'}(\Omega), W_0^{1,p}(\Omega)} \dd t
		=\dfrac{\|u_\ell(T)\|_{L^\alpha(\Omega)}^\alpha}{\alpha'}-\dfrac{\|u_0\|_{L^\alpha(\Omega)}^\alpha}{\alpha'}\\
		&\to \dfrac{\|u(T)\|_{L^\alpha(\Omega)}^\alpha}{\alpha'}-\dfrac{\|u_0\|_{L^\alpha(\Omega)}^\alpha}{\alpha'},\quad\textup{ as }\ell \to 0^+.
	\end{aligned}
\end{equation}

Specializing $v=u_\ell - u$ in~\eqref{eq:1}, we obtain:
\begin{equation}
	\label{eq:12}
	\begin{aligned}
		&\int_{0}^{T} \left\langle \dfrac{\dd}{\dd t}\left(|u_\ell|^{\alpha-2} u_\ell\right), u_\ell - u \right\rangle_{W^{-1,p'}(\Omega), W_0^{1,p}(\Omega)}\dd t\\
		&\quad+\int_{0}^{T} \int_{\Omega} \mu(x) |\nabla u_\ell(t)|^{p-2} \nabla u_\ell(t) \cdot \nabla(u_\ell(t) - u(t)) \dd x \dd t\\
		&\quad-\dfrac{1}{\ell} \int_{0}^{T} \int_{\Omega} \{u_\ell(t)\}^{-} (u_\ell(t)-u(t)) \dd x \dd t
		=\int_{0}^{T} \int_{\Omega} \tilde{a}(t) (u_\ell(t)-u(t))\dd x \dd t.
	\end{aligned}
\end{equation}

Thanks to the convergence process~\eqref{conv-proc-kappa}, we have that:
\begin{equation}
\label{eq:13}
\left(\sup_{t\in (0,T)} \|u_\ell(t)\|_{W^{1,p}_0(\Omega)}\right)_{\ell>0} \textup{ is bounded independently of }\ell>0.
\end{equation}

For each $i \in \{1,2\}$ and for each $w \in L^p(0,T;W_0^{1,p}(\Omega))$, define the mapping $\widetilde{\partial_i w}:(0,T) \to L^p(\Omega)$ by $\widetilde{\partial_i w}(t):=\partial_i(w(t))$, a.e. in $(0,T)$. Note that the definition of this mapping is well-posed thanks to the assumed regularity of $w$. In particular, it is easy to observe that the boundedness of the sequence $(u_\ell)_{\ell>0}$ in the convergence process~\eqref{conv-proc-kappa} in turn implies that
$$
\int_0^T \int_{\Omega} \left|\widetilde{\partial_i u_\ell}(t)\right|^p\dd x \dd t
=\int_0^T \int_{\Omega} \left|\partial_i (u_\ell(t))\right|^p\dd x \dd t
\le \|u_\ell\|_{L^p(0,T;W_0^{1,p}(\Omega))}^p,
$$
so that the sequence $\left(\widetilde{\partial_i u_\ell}\right)_{\ell>0}$ is bounded in $L^p(0,T;L^p(\Omega))$ independently of $\ell$. Therefore, up to passing to subsequences, the sequence $\left(\widetilde{\partial_i u_\ell}\right)_{\ell>0}$ weakly converges in $L^p(0,T;L^p(\Omega))$ as $\ell \to 0^+$.
Thanks to the convergence process~\eqref{conv-proc-kappa}, is is possible to identify the weak limit of the aforementioned sequence with the function $\widetilde{\partial_i u}$.
Since $|\cdot|$ denotes the $p$-norm of $\mathbb{R}^2$, we obtain that the remarks made above lead to
\begin{equation*}
\begin{aligned}
&\int_{0}^{T}\int_{\Omega} |\nabla(u(t))|^p \dd x \dd t=\sum_{i=1}^{2}\int_{0}^{T} \|\partial_i(u(t))\|_{L^p(\Omega)}^p \dd t
=\sum_{i=1}^{2}\int_{0}^{T} \|(\widetilde{\partial_i u})(t)\|_{L^p(\Omega)}^p \dd t\\
&\le \liminf_{\ell\to0^+}\left(\sum_{i=1}^{2}\int_{0}^{T} \|(\widetilde{\partial_i u_\ell})(t)\|_{L^p(\Omega)}^p \dd t\right)
=\liminf_{\ell\to0^+}\left(\sum_{i=1}^{2}\int_{0}^{T} \|\partial_i(u_\ell(t))\|_{L^p(\Omega)}^p \dd t\right)\\
&=\liminf_{\ell\to0^+}\int_{0}^{T} \int_{\Omega} |\nabla(u_\ell(t))|^p \dd x \dd t,
\end{aligned}
\end{equation*}
where the last equality is due to the fact that the assumption $2.8 \le p \le 5$ implies that the mapping $\xi \ge 0 \mapsto \xi^p$ is monotonically increasing (viz., e.g., Lemma~\ref{lem:4}). In conclusion, the latter series of computations shows that:
\begin{equation*}
\int_{0}^{T}\int_{\Omega} |\nabla(u(t))|^p \dd x \dd t \le \liminf_{\ell\to0^+}\int_{0}^{T} \int_{\Omega} |\nabla(u_\ell(t))|^p \dd x \dd t.
\end{equation*}

Since $\mu$ is a positive and smooth weight, we have that:
\begin{equation}
\label{eq:16}
\begin{aligned}
&\limsup_{\ell\to0^+}\left(-\int_{0}^{T} \int_{\Omega} \mu(x) |\nabla (u_\ell(t))|^p \dd x \dd t\right)
=-\liminf_{\ell\to0^+}\int_{0}^{T} \int_{\Omega} \mu(x) |\nabla (u_\ell(t))|^p \dd x \dd t\\
&\le -\int_{0}^{T}\int_{\Omega} \mu(x) |\nabla(u(t))|^p \dd x \dd t.
\end{aligned}
\end{equation}

Define the operator $A:L^1(0,T;W_0^{1,p}(\Omega)) \to L^\infty(0,T;W^{-1,p'}(\Omega))$ as follows:
\begin{equation*}
\begin{aligned}
\langle Au , v \rangle_{L^\infty(0,T;W^{-1,p'}(\Omega)),L^1(0,T;W_0^{1,p}(\Omega))}:=\int_{0}^{T} \int_{\Omega} \mu(x) |\nabla(u(t))|^{p-2} \nabla(u(t)) \cdot \nabla(v(t)) \dd x \dd t,
\end{aligned}
\end{equation*}
for all $u,v \in L^1(0,T;W_0^{1,p}(\Omega))$. 
For any $v \in L^1(0,T;W^{1,p}_0(\Omega))$, an application of H\"older's inequality and~\eqref{eq:13} gives that:
\begin{equation*}
\begin{aligned}
&\left|\int_{0}^{T} \int_{\Omega} \mu(x) |\nabla (u_\ell(t))|^{p-2} \nabla(u_\ell(t)) \cdot\nabla(v(t)) \dd x \dd t\right|\\
&=\left|\int_{0}^{T} \sum_{i=1}^2\int_{\Omega} \mu(x) |\nabla (u_\ell(t))|^{p-2} \partial_i(u_\ell(t)) \partial_i(v(t)) \dd x \dd t\right|\\
& \le \mu_2\left|\sum_{i=1}^2\int_{0}^{T} \left(\int_{\Omega}\left| |\nabla(u_\ell(t))|^{p-2} \partial_i(u_\ell(t))\right|^{p/(p-1)} \dd x\right)^{(p-1)/p} \left(\int_{\Omega} |\partial_i(v(t))|^p \dd x\right)^{1/p} \dd t\right|\\
& \le \mu_2\left|\sum_{i=1}^2\int_{0}^{T} \left(\int_{\Omega}|\nabla(u_\ell(t))|^{p(p-2)/(p-1)} |\partial_i(u_\ell(t))|^{p/(p-1)} \dd x\right)^{(p-1)/p} \left(\int_{\Omega} |\partial_i(v(t))|^p \dd x\right)^{1/p} \dd t\right|\\
&\le 2 \mu_2 \left|\int_{0}^{T} \|\nabla(u_\ell(t))\|_{\bm{L}^p(\Omega)}^{p-1} \|\nabla(v(t))\|_{\bm{L}^p(\Omega)} \dd t\right| \le2 \mu_2 \left(\sup_{t\in (0,T)} \|u_\ell(t)\|_{W_0^{1,p}(\Omega)}\right)^{p-1} \|v\|_{L^1(0,T;W_0^{1,p}(\Omega))}.
\end{aligned}
\end{equation*}

Therefore, we obtain that the operator $A$ is bounded, in the sense that it maps bounded sets of $L^1(0,T;W_0^{1,p}(\Omega))$ into bounded sets of $L^\infty(0,T;W^{-1,p'}(\Omega))$ (cf., e.g., \cite{Lions1969}).

In light of~\eqref{eq:13} we have that, up to passing to a suitable subsequence, the following convergence occurs:
\begin{equation*}
Au_\ell \wsc g_\mu,\quad\textup{ in } L^\infty(0,T;W^{-1,p'}(\Omega)) \textup{ as }\ell\to 0^+.
\end{equation*}

In particular, for any test function $v \in \mathcal{C}^0([0,T];W^{1,p}_0(\Omega))$, an application of Theorem~2.2.9 of~\cite{GasPap06} gives:
\begin{equation}
\label{c:1}
\int_{0}^{T} \int_{\Omega} \mu(x) |\nabla(u_\ell(t))|^{p-2} \nabla(u_\ell(t)) \cdot\nabla(v(t))\dd x \dd t \to \int_{0}^{T} \langle g_\mu(t), v(t) \rangle_{W^{-1,p'}(\Omega),W^{1,p}_0(\Omega)} \dd t.
\end{equation}

Let $v \in \mathcal{C}^0([0,T];W^{1,p}_0(\Omega))$ be such that $v(t) \ge 0$ in $\overline{\Omega}$ (since $p>2$), and multiply equation~\eqref{eq:1} by $(v(t)-u_\ell(t))$ in the sense of the duality between $W^{-1,p'}(\Omega)$ and $W^{1,p}_0(\Omega)$. A subsequent integration in $(0,T)$ gives:
\begin{equation}
\label{preliminary}
\begin{aligned}
&\int_{0}^{T} \left\langle \dfrac{\dd}{\dd t}(|u_\ell|^{\alpha-2} u_\ell), v(t)-u_\ell(t) \right\rangle_{W^{-1,p'}(\Omega),W^{1,p}_0(\Omega)} \dd t\\
&\quad+\int_{0}^{T} \int_{\Omega} \mu(x) |\nabla(u_\ell(t))|^{p-2} \nabla(u_\ell(t)) \cdot (\nabla(v(t))-\nabla(u_\ell(t))) \dd x \dd t\\
&\ge \int_{0}^{T} \int_{\Omega} \tilde{a}(t) (v(t)-u_\ell(t)) \dd x\dd t,
\end{aligned}
\end{equation}
where the inequality holds thanks to the monotonicity of $-\{\cdot\}^{-}$ (cf., e.g., Lemma~\ref{lem:4}).

Passing to the $\limsup$ as $\ell\to0^+$ in~\eqref{preliminary}, an application of~\eqref{eq:11} and~\eqref{eq:16} and an application of~\eqref{c:1} give that the limit $u$ is a solution to the following \emph{abstract limit problem}:
\begin{equation*}
\begin{aligned}
&\left\langle\left\langle\dfrac{\dd}{\dd t}(|u|^{\alpha-2} u), v\right\rangle\right\rangle_{(\mathcal{C}^0([0,T];W^{1,p}_0(\Omega)))^\ast,\mathcal{C}^0([0,T];W^{1,p}_0(\Omega))}
-\dfrac{\|u(T)\|_{L^\alpha(\Omega)}^\alpha}{\alpha'}+\dfrac{\|u_0\|_{L^\alpha(\Omega)}^\alpha}{\alpha'}\\
&\quad+\int_{0}^{T} \langle g_\mu(t),v(t)\rangle_{W^{-1,p'}(\Omega),W^{1,p}_0(\Omega)} \dd t
-\int_{0}^{T} \int_{\Omega} \mu(x) |\nabla(u(t))|^p \dd x \dd t\\
&\ge \int_{0}^{T} \int_{\Omega} \tilde{a}(t) (v(t)-u(t)) \dd x\dd t.
\end{aligned}
\end{equation*}

In the end, we are in position to write down the model governing the evolution of the thickness of a shallow ice sheet, and to assert that this model admits at least one solution.
This is the main result of this article, the proof of which has been given above.

\begin{theorem}
	\label{thm:4}
	Let $T>0$, let $\Omega \subset \mathbb{R}^2$ be a domain, let $p$ be as in Section~\ref{Sec:2}, and let $\alpha$ be as in~\eqref{alpha}. 
	Assume that ($H$\ref{H1})--($H$\ref{H4}) hold.	
	Then, the family $\{u_\ell\}_{\ell>0}$ of solutions of Problem~\ref{Pkappa} generated by Theorem~\ref{thm:3} satisfies the following convergences as $\ell \to 0^+$:
	\begin{equation*}
		\begin{aligned}
			u_\ell \wsc u, &\textup{ in } L^\infty(0,T;W_0^{1,p}(\Omega)),\\
			|u_\ell|^\frac{\alpha-2}{2} u_\ell \wsc |u|^\frac{\alpha-2}{2} u, &\textup{ in } L^\infty(0,T;L^2(\Omega)),\\
			\dfrac{\dd}{\dd t}\left(|u_\ell|^\frac{\alpha-2}{2} u_\ell\right) \rightharpoonup \dfrac{\dd}{\dd t}\left(|u|^\frac{\alpha-2}{2} u\right), &\textup{ in } L^2(0,T;L^2(\Omega)),\\
			|u_\ell|^{\alpha-2} u_\ell \wsc |u|^{\alpha-2} u, &\textup{ in } L^\infty(0,T;L^{\alpha'}(\Omega)),\\
			\dfrac{\dd}{\dd t}\left(|u_\ell|^{\alpha-2} u_\ell\right) \wsc \dfrac{\dd}{\dd t}\left(|u|^{\alpha-2} u\right), &\textup{ in } \mathcal{M}([0,T];W^{-1,p'}(\Omega)).
		\end{aligned}
	\end{equation*}

	Moreover, the limit $u$ is a solution to the following variational problem:
	\begin{customprob}{$\mathcal{P}$}
		\label{P}
		Find $u \in \mathcal{K}:=\{v \in L^\infty(0,T;W_0^{1,p}(\Omega)); v(t) \in K \textup{ a.e. in } (0,T)\}$ such that
		\begin{align*}
			u &\in L^\infty(0,T;W_0^{1,p}(\Omega)),\\
			\dfrac{\dd}{\dd t}\left(|u|^\frac{\alpha-2}{2} u\right) &\in L^2(0,T;L^2(\Omega)),\\
			\dfrac{\dd}{\dd t}\left(|u|^{\alpha-2}u\right) &\in \mathcal{M}([0,T];W^{-1,p'}(\Omega)),
		\end{align*}
		and find $g_\mu \in L^\infty(0,T;W^{-1,p'}(\Omega))$ satisfying the following doubly-nonlinear parabolic variational inequality
		\begin{equation*}
		\begin{aligned}
		&\left\langle\left\langle\dfrac{\dd}{\dd t}(|u|^{\alpha-2} u), v\right\rangle\right\rangle_{(\mathcal{C}^0([0,T];W^{1,p}_0(\Omega)))^\ast,\mathcal{C}^0([0,T];W^{1,p}_0(\Omega))}
		-\dfrac{\|u(T)\|_{L^\alpha(\Omega)}^\alpha}{\alpha'}+\dfrac{\|u_0\|_{L^\alpha(\Omega)}^\alpha}{\alpha'}\\
		&\quad+\int_{0}^{T} \langle g_\mu(t),v(t)\rangle_{W^{-1,p'}(\Omega),W^{1,p}_0(\Omega)} \dd t
		-\int_{0}^{T} \int_{\Omega} \mu(x) |\nabla(u(t))|^p \dd x \dd t\\
		&\ge \int_{0}^{T} \int_{\Omega} \tilde{a}(t) (v(t)-u(t)) \dd x\dd t,
		\end{aligned}
		\end{equation*}
		for all $v\in \mathcal{C}^0([0,T];W_0^{1,p}(\Omega))$ such that $v(t)\ge 0$ in $\overline{\Omega}$ for all $t\in [0,T]$,
		as well as the following initial condition
		$$
		u(0)=u_0,
		$$
		for some prescribed $u_0 \in K$.
	\end{customprob}
	\qed
\end{theorem}

We conclude the article with a remark where we propose a sufficient condition to identify the limit $g_\mu$ as well as the vector-valued measure.

\begin{remark}
	In the special case where $|u|^{\alpha-2} u \in W^{1,1}(0,T;W^{-1,p'}(\Omega))$ and $u \in \mathcal{C}^0([0,T];W_0^{1,p}(\Omega))$ (these constitute another \emph{regularity assumption} on a solution to Problem~\ref{P}) we see that the following convergence holds thanks to~\eqref{key-2}:
	\begin{equation}
		\label{eq:17}
		\lim_{\ell\to0^+}\int_{0}^{T} \left\langle \dfrac{\dd}{\dd t}\left(|u_\ell|^{\alpha-2} u_\ell\right), u(t) \right\rangle_{W^{-1,p'}(\Omega), W_0^{1,p}(\Omega)}\dd t
		=\dfrac{\|u(T)\|_{L^\alpha(\Omega)}^\alpha}{\alpha'}-\dfrac{\|u_0\|_{L^\alpha(\Omega)}^\alpha}{\alpha'}.
	\end{equation}
	
	Combining~\eqref{eq:11} and~\eqref{eq:17} and the first convergence in~\eqref{conv-proc-kappa}, we obtain that~\eqref{eq:12} gives:
	\begin{equation}
		\label{eq:18}
		\limsup_{\ell\to 0^+} \int_{0}^{T} \int_{\Omega}\mu(x) |\nabla u_\ell(t)|^{p-2} \nabla u_\ell(t) \cdot \nabla(u_\ell(t)-u(t)) \dd x\dd t \le 0.
	\end{equation}
	
	Since the $p$-Laplacian in divergence form is pseudo-monotone (cf., e.g., Proposition~2.5 of~\cite{Lions1969}), as it is hemi-continuous, strictly monotone and bounded, an application of~\eqref{eq:18} gives that
	\begin{equation}
		\label{eq:19}
		\begin{aligned}
			&\int_{0}^{T} \int_{\Omega}\mu(x) |\nabla u(t)|^{p-2} \nabla u(t) \cdot \nabla(u(t)-v(t)) \dd x\dd t\\
			&\le \liminf_{\ell\to 0^+} \int_{0}^{T} \int_{\Omega}\mu(x) |\nabla u_\ell(t)|^{p-2} \nabla u_\ell(t) \cdot \nabla(u_\ell(t)-v(t)) \dd x\dd t,
		\end{aligned}
	\end{equation}
	for all $v \in L^2(0,T;W_0^{1,p}(\Omega))$. Observe that the latter space is suitable for treating pseudo-monotonicity, as it is reflexive.
	
	Let us now consider an arbitrary function $v \in \mathcal{C}^0([0,T];W_0^{1,p}(\Omega))$ such that $v(t)\ge 0$ in $\overline{\Omega}$ for all $t \in [0,T]$.
	Evaluating the variational equations~\eqref{eq:1} at $w_\ell:=v-u_\ell$, we obtain:
	\begin{equation}
		\label{final-step}
		\begin{aligned}
			&\int_{0}^{T} \left\langle \dfrac{\dd }{\dd t}\left(|u_\ell(t)|^{\alpha-2} u_\ell(t)\right) , v(t)-u_\ell(t)\right\rangle_{W^{-1,p'}(\Omega), W_0^{1,p}(\Omega)} \dd t\\
			&\quad + \int_{0}^{T} \int_{\Omega} \mu(x) |\nabla u_\ell(t)|^{p-2} \nabla u_\ell(t) \cdot \nabla(v(t)-u_\ell(t)) \dd x \dd t\\
			&\quad-\dfrac{1}{\ell}\int_{0}^{T} \int_{\Omega} \{u_\ell(t)\}^{-} (v(t)-u_\ell(t)) \dd x \dd t\\
			&=\int_{0}^{T} \int_{\Omega} \tilde{a}(t) (v(t)-u_\ell(t)) \dd x \dd t.
		\end{aligned}
	\end{equation}
	
	Let $\ell\to 0^+$ in~\eqref{final-step}, exploit the convergence process~\eqref{conv-proc-kappa}, inequality~\eqref{eq:19} and the fact that the integral associated with the negative part is $\le 0$, since $u(t) \ge 0$ a.e. in $\overline{\Omega}$ for a.e. $t \in (0,T)$, gives that $u$ is a solution to the following variational inequality
	\begin{equation*}
		\begin{aligned}
			&\int_{0}^{T} \int_{\Omega} \dfrac{\dd }{\dd t}\left(|u(t)|^{\alpha-2} u(t)\right) (v(t)-u_\ell(t)) \dd x \dd t\\
			&\quad+ \int_{0}^{T} \int_{\Omega} \mu(x) |\nabla u(t)|^{p-2} \nabla u(t) \cdot \nabla(v(t)-u(t)) \dd x \dd t\\
			&\ge \int_{0}^{T} \int_{\Omega} \tilde{a}(t) (v(t)-u(t))\dd x \dd t+\dfrac{\|u(T)\|_{L^\alpha(\Omega)}^\alpha}{\alpha'}-\dfrac{\|u_0\|_{L^\alpha(\Omega)}^\alpha}{\alpha'},
		\end{aligned}
	\end{equation*}
for all $v\in \mathcal{C}^0([0,T];W_0^{1,p}(\Omega))$ such that $v(t)\ge 0$ in $\overline{\Omega}$ for all $t\in [0,T]$,
as well as the following initial condition
$$
u(0)=u_0,
$$
for some prescribed $u_0 \in K$.	
\bqed
\end{remark}

\begin{remark}	
	In the special case where there exists a constant $C>0$ independent of $\ell$ such that
	\begin{equation}
		\label{SC2'}
		\dfrac{1}{\ell}\|\{u_\ell\}^{-}\|_{L^\infty(0,T;W^{-1,p'}(\Omega))} \le C,
	\end{equation}
	we do not need to resort to vector-valued measures. Indeed, the regularity condition~\eqref{SC2'} lets us obtain the following sharper result (cf., \eqref{eq:1}):
	\begin{equation*}
	\left\{\dfrac{\dd}{\dd t}\left(|u_\ell|^{\alpha-2} u_\ell\right)\right\}_{\ell>0} \textup{ is bounded in } L^\infty(0,T;W^{-1,p'}(\Omega)) \textup{ independently of $\ell$}.
	\end{equation*}
	
	The condition~\eqref{SC2'} means that the melting regime is at some point overcome by the accumulation regime. One can expect this type of behaviour by virtue of the cyclic accumulation and ablation phases ice sheets undergo.
	\bqed
\end{remark}


\section*{Conclusions}

In this article we formulated a time-dependent model governing the evolution of the thickness of a shallow grounded ice sheet undergoing regimes of melting and ablation. The shallow ice sheet varying height, that is the unknown the model is described in terms of, is subjected to obey the constraint of being nonnegative. For this reason, the problem under consideration can be regarded as an obstacle problem.

First, we recovered the formal model, based on Glen's power law. This passage was inspired by the modelling for the static case of Jouvet \& Bueler~\cite{JouvBuel2012}.

Second, we incorporated the constraint, in the form of a monotone term, in the model. By so doing it was possible to write down the variational formulation of the model under consideration in terms of a variational equation posed over a vector space.
The existence of solutions for the penalized model was established thanks to the Dubinskii compactness theorem and a series of preliminary results, that we stated in the form of Lemmas.

Third, we let the penalty parameter approach zero and we recovered the actual model, which takes the form of a variational inequality tested over a nonempty, closed, and convex subset of the space $\mathcal{C}^0([0,T];W_0^{1,p}(\Omega))$.

It is worth mentioning that the concept of solution we are considering in this paper is different from the one discussed in Definition~3.1 on page~688 of~\cite{Diaz2002}. In our context, the modelling did not lead us to consider the analogue of the function $j$ appearing in Definition~3.1 on page~688 of~\cite{Diaz2002}, and we did not require our weak solutions to satisfy the identity indicated in Definition~3.1 on page~688 of~\cite{Diaz2002}. Therefore, differently from~\cite{Diaz2002}, the proof of the existence result for the concept of solution we considered in this paper required the exploitation of a compactness argument based on the Dubinskii's compactness lemma as well as several other brand new preparatory inequalities. 
		

\section*{Acknowledgements}

The authors would like to express their most sincere gratitude to Professor Ed Bueler (University of Alaska, Fairbanks) for the insightful comments on the physics of the problem and for the suggested improvements.

The authors would like to express their most sincere gratitude to the Anonymous Referees for the insightful comments and the suggested improvements.

This article was partly supported by the Research Fund of Indiana University.

\bibliographystyle{abbrvnat} 
\bibliography{references.bib}	

\begin{thebibliography}{58}
\providecommand{\natexlab}[1]{#1}
\providecommand{\url}[1]{\texttt{#1}}
\expandafter\ifx\csname urlstyle\endcsname\relax
  \providecommand{\doi}[1]{doi: #1}\else
  \providecommand{\doi}{doi: \begingroup \urlstyle{rm}\Url}\fi

\bibitem[Barrett and S\"{u}li(2012)]{BarretSuli2011}
J.~Barrett and E.~S\"{u}li.
\newblock Reflections on {D}ubinskii's nonlinear compact embedding theorem.
\newblock \emph{Publ. Inst. Math.}, 91\penalty0 (105):\penalty0 95--110, 2012.

\bibitem[Binz et~al.(2022{\natexlab{a}})Binz, Brandt, and Hieber]{Brandt2022-2}
T.~Binz, F.~Brandt, and M.~Hieber.
\newblock Rigorous analysis of the interaction problem of sea ice with a rigid
  body, 2022{\natexlab{a}}.
\newblock URL \url{https://arxiv.org/abs/2209.14052}.

\bibitem[Binz et~al.(2022{\natexlab{b}})Binz, Brandt, and Hieber]{Brandt2022-3}
T.~Binz, F.~Brandt, and M.~Hieber.
\newblock Coupling of the ocean and atmosphere dynamics with sea ice,
  2022{\natexlab{b}}.
\newblock URL \url{https://arxiv.org/abs/2209.13150}.

\bibitem[Bock and Jaru\v{s}ek(2009)]{BockJar2009}
I.~Bock and J.~Jaru\v{s}ek.
\newblock On hyperbolic contact problems.
\newblock \emph{Tatra Mt. Math. Publ.}, 43:\penalty0 25--40, 2009.

\bibitem[Bock and Jaru\v{s}ek(2013)]{BockJar2013}
I.~Bock and J.~Jaru\v{s}ek.
\newblock Dynamic contact problem for viscoelastic von
  {K}\'{a}rm\'{a}n--{D}onnell shells.
\newblock \emph{ZAMM Z. Angew. Math. Mech.}, 93:\penalty0 733--744, 2013.

\bibitem[Bock et~al.(2016)Bock, Jaru\v{s}ek, and
  \v{S}ilhav\'{y}]{BockJarSil2016}
I.~Bock, J.~Jaru\v{s}ek, and M.~\v{S}ilhav\'{y}.
\newblock On the solutions of a dynamic contact problem for a thermoelastic von
  {K}\'{a}rm\'{a}n plate.
\newblock \emph{Nonlinear Anal. Real World Appl.}, 32:\penalty0 111--135, 2016.

\bibitem[Brandt and Hieber(2023)]{Brandt2022-1}
F.~Brandt and M.~Hieber.
\newblock Time periodic solutions to {H}ibler's sea ice model, 2023.

\bibitem[Brandt et~al.(2022)Brandt, Disser, Haller-Dintelmann, and
  Hieber]{Hieber2021}
F.~Brandt, K.~Disser, R.~Haller-Dintelmann, and M.~Hieber.
\newblock Rigorous analysis and dynamics of {H}ibler's sea ice model.
\newblock \emph{J. {N}onlinear {S}ci.}, 32, 2022.

\bibitem[Brezis(1972)]{Brezis1972}
H.~Brezis.
\newblock Probl\`emes unilat\'{e}raux.
\newblock \emph{J. Math. Pures Appl. (9)}, 51:\penalty0 1--168, 1972.

\bibitem[Brezis(2011)]{Brez11}
H.~Brezis.
\newblock \emph{Functional {A}nalysis, {S}obolev {S}paces and {P}artial
  {D}ifferential {E}quations}.
\newblock Springer, New York, 2011.

\bibitem[Bueler et~al.(2005)Bueler, Lingle, Kallen-Brown, Covey, and
  Bowman]{Bueler2005}
E.~Bueler, C.~S. Lingle, J.~Kallen-Brown, D.~N. Covey, and L.~N. Bowman.
\newblock Exact solutions and verification of numerical models for isothermal
  ice sheets.
\newblock \emph{J. {G}laciol.}, 51:\penalty0 291--306, 2005.

\bibitem[Caffarelli and Salsa(2005)]{SalsaCaff2005}
L.~Caffarelli and S.~Salsa.
\newblock \emph{A geometric approach to free boundary problems}, volume~68 of
  \emph{Graduate Studies in Mathematics}.
\newblock American Mathematical Society, Providence, RI, 2005.

\bibitem[Calvo et~al.(2002)Calvo, D\'{\i}az, Durany, Schiavi, and
  V\'{a}zquez]{Diaz2002}
N.~Calvo, J.~I. D\'{\i}az, J.~Durany, E.~Schiavi, and C.~V\'{a}zquez.
\newblock On a doubly nonlinear parabolic obstacle problem modelling ice sheet
  dynamics.
\newblock \emph{SIAM J. Appl. Math.}, 63\penalty0 (2):\penalty0 683--707, 2002.

\bibitem[Ciarlet(2013)]{PGCLNFAA}
P.~G. Ciarlet.
\newblock \emph{Linear and Nonlinear Functional Analysis with Applications}.
\newblock Society for Industrial and Applied Mathematics, Philadelphia, 2013.

\bibitem[Ciarlet(2021)]{PGCLCSHA}
P.~G. Ciarlet.
\newblock \emph{Locally Convex Spaces and Harmonic Analysis. An Introduction.}
\newblock Society for Industrial and Applied Mathematics (SIAM), Philadelphia,
  PA, 2021.

\bibitem[Coon et~al.(1974)Coon, Maykut, Pritchard, Rothrock, and S]{Coon1974}
M.~D. Coon, G.~A. Maykut, R.~S. Pritchard, D.~A. Rothrock, and T.~A. S.
\newblock Modeling the pack ice as an elastic-plastic material.
\newblock \emph{AIDJEX Bull.}, 24:\penalty0 1--105, 1974.

\bibitem[D\'{\i}az and Schiavi(1999)]{Diaz1999}
J.~I. D\'{\i}az and E.~Schiavi.
\newblock On a degenerate parabolic/hyperbolic system in glaciology giving rise
  to a free boundary.
\newblock \emph{Nonlinear Anal.}, 38\penalty0 (5, Ser. B: Real World
  Appl.):\penalty0 649--673, 1999.

\bibitem[D\'{\i}az et~al.(2007)D\'{\i}az, Mu\~{n}oz, and Schiavi]{Diaz2007}
J.~I. D\'{\i}az, A.~I. Mu\~{n}oz, and E.~Schiavi.
\newblock Existence of weak solutions to a system of nonlinear partial
  differential equations modelling ice streams.
\newblock \emph{Nonlinear Anal. Real World Appl.}, 8\penalty0 (1):\penalty0
  267--287, 2007.

\bibitem[Diestel and Uhl(1977)]{DieUhl1977}
J.~Diestel and J.~J. Uhl.
\newblock \emph{Vector measures}.
\newblock American Mathematical Society, Providence, R.I., 1977.

\bibitem[Dinculeanu(1967)]{Dinculeanu1965}
N.~Dinculeanu.
\newblock \emph{Vector measures}.
\newblock Pergamon Press, Oxford-New York-Toronto, Ont.; VEB Deutscher Verlag
  der Wissenschaften, Berlin, 1967.

\bibitem[Dubinski\u{\i}(1965)]{Dubinski1965}
J.~A. Dubinski\u{\i}.
\newblock Weak convergence for nonlinear elliptic and parabolic equations.
\newblock \emph{Mat. Sb. (N.S.)}, 67 (109):\penalty0 609--642, 1965.

\bibitem[Evans(2010)]{Evans2010}
L.~C. Evans.
\newblock \emph{Partial {D}ifferential {E}quations}.
\newblock American Mathematical Society, Providence, {S}econd edition, 2010.

\bibitem[Fichera(1977)]{Fichera77}
G.~Fichera.
\newblock Problemi unilaterali nella statica dei sistemi continui.
\newblock \emph{Atti Accad. Sci. Torino Cl. Sci. Fis. Mat. Natur.},
  111:\penalty0 169--178, 1977.

\bibitem[Figalli et~al.()Figalli, Ros-Oton, and Serra]{Figalli2021}
A.~Figalli, X.~Ros-Oton, and J.~Serra.
\newblock The singular set in the {S}tefan problem.
\newblock URL \url{https://arxiv.org/abs/2103.13379}.

\bibitem[Fowler(2005)]{Fowler1997}
A.~C. Fowler.
\newblock \emph{Mathematical Models in the Applied Sciences}.
\newblock Cambridge University Press, Cambridge, UK, 2005.

\bibitem[Gasi\'{n}ski and Papageorgiou(2006)]{GasPap06}
L.~Gasi\'{n}ski and N.~S. Papageorgiou.
\newblock \emph{Nonlinear analysis}, volume~9 of \emph{Series in Mathematical
  Analysis and Applications}.
\newblock Chapman \& Hall/CRC, Boca Raton, FL, 2006.

\bibitem[Glen(1970)]{Glen1970}
J.~W. Glen.
\newblock Thoughts on a vicous model for sea ice.
\newblock \emph{AIDJEX Bull.}, 2:\penalty0 18--27, 1970.

\bibitem[Goldsby and Kohlstedt(2001)]{GoldKohl2001}
D.~L. Goldsby and D.~L. Kohlstedt.
\newblock Superplastic deformation of ice: Experimental observations.
\newblock \emph{J. Geophys. Res.}, 106:\penalty0 11017--11030, 2001.

\bibitem[Greve and Blatter(2009)]{GreveBlatter2009}
R.~Greve and H.~Blatter.
\newblock \emph{Dynamics of Ice Sheets and Glaciers}.
\newblock Springer-Verlag, Berlin Heidelberg, 2009.

\bibitem[Hibler(1979)]{Hibler1979}
W.~D. Hibler.
\newblock A dynamic theormodynamic sea ice model.
\newblock \emph{J. Phys. Oceanogr.}, 9:\penalty0 815--846, 1979.

\bibitem[Hooke(2005)]{Hooke2005}
R.~L. Hooke.
\newblock \emph{Principles of Glacier Mechanics}.
\newblock Cambridge University Press, Cambridge, UK, 2005.

\bibitem[Jouvet(2015{\natexlab{a}})]{Jouvet2015}
G.~Jouvet.
\newblock Multilayer shallow shelf approximation: {M}inimisation formulation,
  finite element solvers and applications.
\newblock \emph{J. Comput. Phys.}, 287:\penalty0 60--76, 2015{\natexlab{a}}.

\bibitem[Jouvet(2015{\natexlab{b}})]{Jouvet2015-2}
G.~Jouvet.
\newblock A multilayer ice-flow model generalising the shallow shelf
  approximation.
\newblock \emph{J. Fluid Mech.}, 764:\penalty0 26--51, 2015{\natexlab{b}}.

\bibitem[Jouvet and Bueler(2012)]{JouvBuel2012}
G.~Jouvet and E.~Bueler.
\newblock Steady, shallow ice sheets as obstacle problems: well-posedness and
  finite element approximation.
\newblock \emph{SIAM J. Appl. Math.}, 72\penalty0 (4):\penalty0 1292--1314,
  2012.

\bibitem[Jouvet and Rappaz(2014)]{Jouvet2014}
G.~Jouvet and J.~Rappaz.
\newblock Numerical analysis and simulation of the dynamics of mountain
  glaciers.
\newblock In \emph{Modeling, simulation and optimization for science and
  technology}, volume~34 of \emph{Comput. Methods Appl. Sci.}, pages 83--92.
  Springer, Dordrecht, 2014.

\bibitem[Jouvet et~al.(2013)Jouvet, Bueler, Gr\"{a}ser, and
  Kornhuber]{Jouvet2013}
G.~Jouvet, E.~Bueler, C.~Gr\"{a}ser, and R.~Kornhuber.
\newblock A nonsmooth {N}ewton multigrid method for a hybrid, shallow model of
  marine ice sheets.
\newblock In \emph{Recent advances in scientific computing and applications},
  volume 586 of \emph{Contemp. Math.}, pages 197--205. Amer. Math. Soc.,
  Providence, RI, 2013.

\bibitem[Leoni(2017)]{Leoni2017}
G.~Leoni.
\newblock \emph{{A} {F}irst {C}ourse in {S}obolev {S}paces}, volume 181 of
  \emph{{G}raduate {S}tudies in {M}athematics}.
\newblock American Mathematical Society, Providence, {S}econd edition, 2017.

\bibitem[Lions(1969)]{Lions1969}
J.-L. Lions.
\newblock \emph{Quelques m\'{e}thodes de r\'{e}solution des probl\`emes aux
  limites non lin\'{e}aires}.
\newblock Dunod; Gauthier-Villars, Paris, 1969.

\bibitem[Lions and Magenes(1972)]{LioMagI}
J.-L. Lions and E.~Magenes.
\newblock \emph{Non-homogeneous boundary value problems and applications.
  {V}ol. {I}}.
\newblock Springer-Verlag, New York-Heidelberg, 1972.

\bibitem[Liu et~al.(2022)Liu, Thomas, and Titi]{Titi2021}
X.~Liu, M.~Thomas, and E.~S. Titi.
\newblock Well-posedness of {H}ibler's dynamical sea-ice model.
\newblock \emph{J. {N}onlinear {S}ci.}, 32, 2022.

\bibitem[Michel et~al.(2014)Michel, Picasso, Farinotti, Bauder, Funk, and
  Blatter]{Michel2014}
L.~Michel, M.~Picasso, D.~Farinotti, A.~Bauder, M.~Funk, and H.~Blatter.
\newblock Estimating the ice thickness of mountain glaciers with a shape
  optimization algorithm using surface topography and mass-balance.
\newblock \emph{J. Inverse Ill-Posed Probl.}, 22\penalty0 (6):\penalty0
  787--818, 2014.

\bibitem[Minty(1962)]{Minty1962}
G.~Minty.
\newblock Monotone (non linear) operators in {H}ilbert spaces.
\newblock \emph{{D}uke {M}ath. {J}.}, 29:\penalty0 341--346, 1962.

\bibitem[Pedersen(2018)]{Pedersen2018}
G.~K. Pedersen.
\newblock \emph{{$C^*$}-algebras and their automorphism groups}.
\newblock Pure and Applied Mathematics (Amsterdam). Academic Press, London,
  2018.

\bibitem[Piersanti et~al.(2022)Piersanti, White, Dragnea, and Temam]{PWDT3D}
P.~Piersanti, K.~White, B.~Dragnea, and R.~Temam.
\newblock A three-dimensional discrete model for approximating the deformation
  of a viral capsid subjected to lying over a flat surface in the static and
  time-dependent case.
\newblock \emph{Anal. Appl. (Singap.)}, 20\penalty0 (6):\penalty0 1159--1191,
  2022.

\bibitem[Raviart(1967)]{Raviart1967}
P.~A. Raviart.
\newblock Sur la r\'{e}solution et l'approximation de certaines \'{e}quations
  paraboliques non lin\'{e}aires d\'{e}g\'{e}n\'{e}r\'{e}es.
\newblock \emph{Arch. Rational Mech. Anal.}, 25:\penalty0 64--80, 1967.

\bibitem[Raviart(1970)]{Raviart1970}
P.~A. Raviart.
\newblock Sur la r\'{e}solution de certaines \'{e}quations paraboliques non
  lin\'{e}aires.
\newblock \emph{J. Functional Analysis}, 5:\penalty0 299--328, 1970.

\bibitem[Roub\'{\i}\v{c}ek(2005)]{Roubicek2005}
T.~Roub\'{\i}\v{c}ek.
\newblock \emph{Nonlinear partial differential equations with applications},
  volume 153 of \emph{International Series of Numerical Mathematics}.
\newblock Birkh\"{a}user Verlag, Basel, 2005.

\bibitem[Royden(1988)]{Royden1988}
H.~L. Royden.
\newblock \emph{Real analysis}.
\newblock Macmillan Publishing Company, New York, third edition, 1988.

\bibitem[Ryan(2002)]{Raymond2002}
R.~A. Ryan.
\newblock \emph{Introduction to tensor products of {B}anach spaces}.
\newblock Springer Monographs in Mathematics. Springer-Verlag London, Ltd.,
  London, 2002.

\bibitem[Sayag and Worster(2013)]{SayagWorster2013}
R.~Sayag and M.~G. Worster.
\newblock Axisymmetric gravity currents of power-law fluids over a rigid
  horizontal surface.
\newblock \emph{J. Fluid Mech.}, 716:\penalty0 11 pages, 2013.

\bibitem[Schoof(2006{\natexlab{a}})]{Schoof2006}
C.~Schoof.
\newblock A variational approach to ice stream flow.
\newblock \emph{J. Fluid Mech.}, 556:\penalty0 227--251, 2006{\natexlab{a}}.

\bibitem[Schoof(2006{\natexlab{b}})]{Schoof2006-2}
C.~Schoof.
\newblock Variational methods for glacier flow over plastic till.
\newblock \emph{J. Fluid Mech.}, 555:\penalty0 299--320, 2006{\natexlab{b}}.

\bibitem[Schoof and Hewitt(2013)]{SchoofHewitt2013}
C.~Schoof and I.~Hewitt.
\newblock Ice-{S}heet {D}ynamics.
\newblock \emph{{A}nnu. {R}ev. {F}luid {M}ech.}, 45:\penalty0 217--239, 2013.

\bibitem[Simon(1978)]{Simon1978}
J.~Simon.
\newblock R\'{e}gularit\'{e} de la solution d'une \'{e}quation non lin\'{e}aire
  dans {${\bf R}^{N}$}.
\newblock In \emph{Journ\'{e}es d'{A}nalyse {N}on {L}in\'{e}aire ({P}roc.
  {C}onf., {B}esan\c{c}on, 1977)}, volume 665 of \emph{Lecture Notes in Math.},
  pages 205--227. Springer, Berlin, 1978.

\bibitem[Stampacchia(1966)]{Stampacchia1965}
G.~Stampacchia.
\newblock \emph{\'{E}quations elliptiques du second ordre \`a coefficients
  discontinus}, volume 1965 of \emph{S\'{e}minaire de Math\'{e}matiques
  Sup\'{e}rieures, No. 16 (\'{E}t\'{e})}.
\newblock Les Presses de l'Universit\'{e} de Montr\'{e}al, Montreal, Que.,
  1966.

\bibitem[Yosida(1995)]{Yosida1980}
K.~Yosida.
\newblock \emph{Functional analysis}.
\newblock Classics in Mathematics. Springer-Verlag, Berlin, 1995.
\newblock Reprint of the sixth (1980) edition.

\bibitem[Young(1912)]{Young1912}
W.~H. Young.
\newblock On {C}lasses of {S}ummable {F}unctions and their {F}ourier {S}eries.
\newblock \emph{{P}roc. {R}. {S}oc. {L}ond. {S}er. {A} {M}ath. {P}hys. {E}ng.
  {S}ci.}, 87:\penalty0 225--229, 1912.

\bibitem[Zinger(1957)]{Singer1959}
I.~Zinger.
\newblock Linear functionals on the space of continuous mappings of a compact
  {H}ausdorff space into a {B}anach space.
\newblock \emph{Rev. Math. Pures Appl.}, 2:\penalty0 301--315, 1957.

\end{thebibliography}

\end{document}